\documentclass{article}
\usepackage{amsmath,amsfonts,amscd,amssymb,bm,cite,amsthm}
\usepackage{mathrsfs,listings,url}
\usepackage{graphics,color,ulem}
\usepackage{float}
\usepackage{subfigure}
\usepackage[colorlinks=true, bookmarksopen,
pdfauthor={CST},
pdfcreator={pdftex},
pdfsubject={algorithms},
linkcolor={blue},
anchorcolor={black},
citecolor={firebrick},
filecolor={magenta},
menucolor={black},
plainpages=false,pdfpagelabels,
urlcolor={db} ]{hyperref}
\usepackage{comment}
\usepackage{verbatim}
\usepackage{marginnote}
\usepackage{float}
\usepackage[makeroom]{cancel}
\usepackage[pdftex]{graphicx}
\usepackage[section]{placeins}
\usepackage{mathptmx}
\usepackage{cases}
\usepackage{subeqnarray}
\usepackage{hhline}
\usepackage[linesnumbered,ruled,vlined]{algorithm2e}
\usepackage{xcolor}
\usepackage{tikz}
\usepackage{caption} 
\usepackage{fullpage}
\usepackage{enumitem}
\usepackage{authblk}
\usepackage{amsthm}  % Needed for theorem-like environments
\newtheorem{lemma}{Lemma}
\newtheorem{proposition}{Proposition}
\newtheorem{theorem}{Theorem}
\newtheorem{corollary}{Corollary}

\excludecomment{confidential}

\restylefloat{table}
\allowdisplaybreaks
\definecolor{firebrick}{rgb}{0.698,0.133,0.133}
\numberwithin{equation}{section}

\begin{document}

\title{Long-Time H1-Stability of the Cauchy One-Leg $\theta$-method for the Navier-Stokes Equations}
\author{Isabel Barrio Sanchez\thanks{Department of Mathematics,
            University of Pittsburgh,
            Pittsburgh, PA
            15260, 
            USA. Email: isb42@pitt.edu},
            Catalin Trenchea\thanks{Department of Mathematics,
            University of Pittsburgh,
            Pittsburgh, PA
            15260, 
            USA. Email: trenchea@pitt.edu}, and
            Wenlong Pei\thanks{Department of Mathematics,
            The Ohio State University,
            Columbus, OH
            43210, 
            USA. Email: pei.176@osu.edu}}

\date{}                     %% if you don't need date to appear

\renewcommand\Affilfont{\itshape\small}
\maketitle

\begin{abstract}
In this paper we study the long-time stability of the Cauchy one-leg $\theta$-methods for the two-dimensional Navier-Stokes equations. We establish the uniform dissipativity in $H^1$, in the sense that the semi-discrete-in-time approximations possess a global attractor for a small enough time step, using the discrete Gr\"{o}nwall lemma and the discrete uniform Gr\"{o}nwall lemma.
\end{abstract}

% Section 1 - Introduction
\section{Introduction and background}
The incompressible Navier-Stokes equations (NSE) are the fundamental mathematical model for viscous fluid flow, having a wide range of applications such as aircraft aerodynamics, weather prediction, and blood flow simulation \cite{MR609732,MR1855030,MR1441312,MR2808162,vallis2006atmospheric,washington2005introduction,MR2777984,Gunz02,Furs00,FormaggiaQuarteroniVeneziani2009}. We consider the two-dimensional case
\begin{align}\label{NSE}
    &u_t+u\cdot\nabla u -\nu\Delta u +\nabla p = f,
    \\
    & \nabla\cdot u=0,
    \notag
\end{align}
where $\Omega\subset\mathbb{R}^2$ is a domain with boundary $\partial\Omega$ of class $C^2$, $u(t,x): [0,\infty)\times\Omega \to \mathbb{R}^2$ denotes the velocity, $p(t,x): [0,\infty) \times\Omega  \to \mathbb{R}$ is the pressure, $\nu$ is the kinematic viscosity, and $f\in L^{\infty}(\mathbb{R}_+;L^2(\Omega)^2)$ represents the body forces applied to the fluid. Furthermore, we consider an initial condition $u_0=u(0,x)\in H^1_0(\Omega)$ and a no-slip boundary condition $u|_{\partial\Omega}=0$.

An understanding of the behavior of \eqref{NSE} for long times is essential for many applications, especially the ones concerning climate modeling. As shown in \cite{MR1441312,MR1855030,MR1116181}, in the continuous, two-dimensional case, the $L^2(\Omega)$ norm of the solution is uniformly bounded in time 
\begin{align}\label{eq:ctsL2}
    \|u(t)\|_{L^2(\Omega)}^2 \leq \|u_0\|^2e^{-\nu \lambda_1t}+\frac{1}{\nu^2\lambda_1^2}(1-e^{-\nu \lambda_1 t})\|f\|_{L^{\infty}(\mathbb{R}_+;L^2(\Omega)^2)}^2,
\end{align}
where $\lambda_1$ is the first eigenvalue of the Stokes operator.
Moreover, as stated in \cite{MR2217369,MR1441312}, $u$ can be uniformly bounded in $H^1_0(\Omega)$ by a function which depends on the initial condition
\begin{align}\label{ctsH1init}
    \|\nabla u(t)\|^2\leq K(\|\nabla u_0\|,\|f\|_{L^{\infty}(\mathbb{R}_+;L^2(\Omega)^2)}).
\end{align}
The dependence on the initial condition is transient, i.e., we have that for sufficiently large times
\begin{align}\label{ctsH1}
    \|\nabla u(t)\|^2\leq K(\|f\|_{L^{\infty}(\mathbb{R}_+;L^2(\Omega)^2)}),
\end{align}
demonstrating the existence of a continuous global attractor. Indeed, the uniform bound \eqref{eq:ctsL2} in $L^2(\Omega)$ provides the existence of an absorbing set for the dynamical system, while the additional $H^1(\Omega)$ bounds \eqref{ctsH1init}-\eqref{ctsH1} yield the compactness needed to guarantee the existence of a global attractor.

For a numerical model to be reliable, it is essential that it captures the behavior of its continuous counterpart. Therefore, it is important to develop numerical methods that exhibit long-time stability. Starting from the work of Tone and Wirosoetisno \cite{MR2217369} for the Backward Euler (BE) semi-discrete in time approximation, numerous studies have proved this result for a variety of numerical schemes, both for the semi-discretization in time \cite{MR2888307,han2024longtimestablesavbdf2numerical,MR4552256,MR2217369,wang2026unconditionallylongtimestablevariablestep,MR2945615,MR2552226} and fully discrete formulations \cite{MR3652175,MR3316969,MR3864126,MR3770328,cheng2025longtimestabilityconvergenceanalysis,MR3564779,MR2888307,MR3592110,MR2340671,MR4468789,MR4342664,MR1116181}. Most of the results that employ a full space-time discretization require a time-step restriction depending on the mesh size, namely a Courant-Friedrichs-Lewy (CFL) condition. Our goal is to extend the long-time stability result to the class of Cauchy one-leg $\theta$-methods, using only a semi-discretization in time.

The Cauchy one-leg $\theta$-method discretizes an initial value problem of the form $y'(t)=f(t,y)$ by computing
\begin{align}\label{thetamethod}
    \frac{y^{n+1}-y^n}{\tau}=f(t_{n+\theta},y^{n+\theta}),
\end{align}
where $t_{n+\theta} = t_n + \theta\tau$, $
y^{n+\theta}=\theta y^{n+1}+(1-\theta) y^n$, and
$\theta\in[0,1]$.

For more information and properties of this scheme, we refer the reader to \cite{MR1299729,MR3944203,MR4265875,MR4398357,MR4505477,MR4092601}. Note that if $\theta=1$, this method corresponds to the BE method, for which long-time stability of the semi-discretization in time was proved in \cite{MR2217369}. Whenever $\theta=\frac{1}{2}$, this method becomes the symplectic midpoint, or Crank-Nicolson (CN) method, which is a second-order scheme. We note that the long-time stability of the full space-time discretization of CN was proved in \cite{MR2340671} under a CFL condition. For $\theta\in\left(\frac{1}{2},1\right)$, \eqref{thetamethod} is an implicit method that interpolates between CN and BE, providing some numerical dissipation. This method is second-order accurate whenever $\theta=\frac{1}{2}+\mathcal{O}(\tau)$. In \cite{MR3864126}, the long-time stability of a second-order, fully discrete, linearly extrapolated method, which interpolates between CN and BDF2, is proved also under a CFL condition. The paper \cite{MR3864126} extends the results for the cases $\theta=1$  \cite{MR3652175}  and $\theta=\frac{1}{2}$ \cite{MR3316969}. As in our analysis, that stability proof applies only to $\theta>\frac{1}{2}$. In \cite{MR3944203}, where the authors use the one-leg $\theta$-method to construct discrete space-time approximations which converge to weak solutions to NSE which are suitable in the sense of Scheffer and Caffarelli-Kohn-Nirenberg. Their result also requires that $\theta>\frac{1}{2}$.

Following \cite{MR4092601}, we can rewrite the Cauchy one-leg $\theta$-method \eqref{thetamethod} as a two-step scheme consisting of a BE step and a linear extrapolation (which is equivalent to Forward Euler (FE)). This allows us to leverage some of the techniques developed for BE in \cite{MR2217369}, so part of our argument for the $H^1$ bound will mimic that proof. The method \eqref{thetamethod} applied to \eqref{NSE} reads: given initial condition $u_0$, compute $u^{n+1}$ for all $n\geq 0$ as follows
\begin{align} \label{eq:BENSE}
\frac{1}{\theta\tau}(u^{n+\theta}-u^n)-&\nu\Delta u^{n+\theta}+u^{n+\theta}\cdot
\nabla u^
{n+\theta}+\nabla p^{n+\theta}=f^{n+\theta},\\\notag
&\nabla\cdot u^{n+\theta}=0,
\end{align}
\begin{equation}\label{eq:FENSE}
u^{n+1}=\frac{1}{\theta}u^{n+\theta}-\bigg(\frac{1}{\theta}-1\bigg)u^n.
\end{equation}

In this paper, we establish the uniform dissipativity of $u^n$, or the long-time stability of \eqref{eq:BENSE}-\eqref{eq:FENSE} in $L^2(\Omega)$ and $H^1(\Omega)$, hence proving that a discrete global attractor exists. In Section \ref{sec:math-prelim}, we introduce the mathematical background necessary for our analysis. Then we proceed to prove the $L^2(\Omega)$ and $H^1(\Omega)$ stability in sections \ref{sec:L2bound} and \ref{sec:H1bound} respectively.

\section{Mathematical preliminaries}\label{sec:math-prelim}
In this section we present some of the mathematical tools and notations used in the analysis.

We consider the following solenoidal Sobolev spaces
\begin{align}
    V &= \{ v\in H^1_0(\Omega)^2, \nabla \cdot v = 0 \}
    \\
    H &= \{ v\in L^2(\Omega)^2, \nabla\cdot v=0, v\cdot \hat{n}=0\; \text{on} \;\partial\Omega\},
\end{align}
where $\hat{n}$ is the outward unit normal on $\partial\Omega$. The space $H$ is endowed with the norm and inner product of $L^2(\Omega)^2$, which we denote by $\|\cdot\|$ and $(\cdot,\cdot)$. The space $V$ is endowed with the $H^1(\Omega)$ semi-inner product and norm, which define a norm and inner product on $V$. We assume that $f\in L^{\infty}(\mathbb{R}_+,H)$, and
\begin{align}
    \|f\|_{\infty} &= \|f\|_{L^{\infty}(\mathbb{R}_+,H)}.
\end{align}
We denote by $A$ the Stokes operator, a linear, unbounded, positive definite operator from $V$ to $V'$ such that 
\[
\langle Au,v\rangle_{V',V} = ( \nabla u,\nabla v) \qquad \forall u,v\in V.
\]
The domain of $A$, denoted by $D(A)$, satisfies
\[D(A) = H^2(\Omega)^2\cap V, \qquad D(A)\subset V\subset H.\]
For more information on operator $A$, we refer the reader to \cite{MR1441312}.
Denoting the first eigenvalue of $A$ as $\lambda_1>0$, we recall the Poincar\'{e}-Friedrichs inequality
 \begin{equation}
     \label{PF}
     \|u\|\leq \frac{1}{\sqrt{\lambda_1}}\|\nabla u\| \qquad \forall u\in V.
 \end{equation}
We also denote $b(u,v,w)=( u\cdot \nabla v,w)$, which is a continuous, trilinear, and skew-symmetric form
\begin{align}\label{triformzero}
    b(u,v,w) = -b(u,w,v) \qquad \forall u,v,w\in V.
\end{align}
We define a bilinear operator $B$ from $V\times V$ to $V'$ by
\[
\langle B(u,v),w\rangle_{V',V}=b(u,v,w)\qquad\forall u,v,w\in V.
\]
Then \eqref{NSE} can be rewritten as \cite{MR3533002}
\[
\frac{du}{dt}+\nu Au+B(u,u)=f, \qquad u(0)=0,
\]
and the discrete Cauchy one-leg method \eqref{eq:BENSE}-\eqref{eq:FENSE} becomes
\begin{align} \label{eq:STAR}
\frac{1}{\tau}(u^{n+1}-u^n)+&\nu A u^{n+\theta}+B(u^{n+\theta}, u^
{n+\theta})=f^{n+\theta}.
\end{align}
Finally, we recall the Ladyzhenskaya inequality in two spatial dimensions \cite{MR2808162,MR108963,MR108962}
\begin{align}\label{ladyzhenskaya}
    \|u\|_{L^4(\Omega)}\leq 2^{-1/4}\|u\|^{1/2}\|\nabla u\|^{1/2}.
\end{align}

\begin{confidential}
{\color{red}

    Suppose we want to solve the equation $y'(t)=f(t,y)$, where $0\leq t\leq b$. Discretize the interval of integration by a mesh $0=t_0<t_1<t_2<\dots<t_N=b$, and let $\tau = t_{n+1}-t_n$, the time step. For the sake of simplicity, we will assume constant time step.\\
Let $t_{n+\theta}=t_n+\theta\tau$ and $y_{n+\theta}=\theta y_{n+1}+(1-\theta)y_n$, where $\theta\in[\frac{1}{2},1]$, $y_n\approx y(t_n)$.  Cauchy's one-legged $\theta$-method goes as follows: 
\[\frac{y^{n+1}-y^n}{\tau}=f(t^{n+\theta},y^{n+\theta}).\]
As shown in \cite{BEFE}, we can rewrite the above method by performing Backward Euler (BE) step and a linear extrapolation:

 \[\text{BE:\quad}\frac{y^{n+\theta}-y^n}{\theta\tau}=f(t^{n+\theta},y^{n+\theta})\]
 \[\text{LE:\quad } y^{n+1}=\frac{1}{\theta}y^{n+\theta}-\bigg(\frac{1}{\theta}-1\bigg)y^n\]
 
Note that the Linear Extrapolation step is equivalent to a Forward Euler step. Why is that?

From the BE step, we have that 
\[
\frac{u^{n+\theta}-u^n}{\theta\tau}=f^{n+\theta},
\]
so $u^n=u^{n+\theta}-\theta\tau f^{n+\theta}$. We can just plug thet into LE to get FE:

\begin{align*}
    u^n+1 &= \frac{1}{\theta}u^{n+\theta}-\left(\frac{1-\theta}{\theta}\right)u^n
    \\
    & =  \frac{1}{\theta}u^{n+\theta}-\left(\frac{1-\theta}{\theta}\right)(u^{n+\theta}-\theta\tau f^{n+\theta})
    \\
    &
    = u^{n+\theta}+(1-\theta)\tau f^{n+\theta}
\end{align*}

So we get FE:

\[
\frac{u^{n+1}-u^{n+\theta}}{(1-\theta)\tau}=f^{n+\theta}.
\]

Applying this scheme to NSE in intro, we get:
\begin{align} 
\frac{1}{\theta\tau}(u^{n+\theta}-u^n)-&\nu\Delta u^{n+\theta}+u^{n+\theta}\cdot
\nabla u^
{n+\theta}+\nabla p^{n+\theta}=f^{n+\theta}\\
&\nabla\cdot u^{n+\theta}=0
\end{align}
\begin{equation}
u^{n+1}=\frac{1}{\theta}u^{n+\theta}-\bigg(\frac{1}{\theta}-1\bigg)u^n
\end{equation}
}
\end{confidential}

% Section 2 - L2-Stability
\section{H-Stability}\label{sec:L2bound}
To prove the long-time stability of $u^n$ in $V$, we first establish a bound in $H$, which is a discrete analogue of \eqref{eq:ctsL2}. We also collect several auxiliary results essential for the $V$ stability analysis. 

This section is organized as follows. In Lemma \ref{lemma:BEstab}, we derive an energy estimate for the BE step \eqref{eq:BENSE}. In Lemma \ref{lemma:energy-equality}, we use \eqref{eq:FENSE} to reformulate the result in Lemma \ref{lemma:BEstab} in terms of $u^{n+1}$ and $u^n$ instead of $u^{n+\theta}$. We then include a remark explaining why the result of this paper does not hold for $\theta=\frac{1}{2}$. In Proposition \ref{prop:L2bound}, we establish an $H$-bound on $u^n$, and in Corollary \ref{cor:absorbing}, we prove the existence of an absorbing set in $H$. Finally, in Lemmas \ref{lemma:gradutheta} and \ref{lemma:gradu1}, we obtain $L^2(H^1(\Omega))$ bounds on the fractional time values $u^{n+\theta}$ and the integer values $u^n$, respectively.

In the first result, we use the kinematic dissipation and the numerical dissipation of the BE step \eqref{eq:STAR} to bound $\|u^{n+\theta}\|$ in terms of $\|u^{n}\|$ and $\|f\|_{\infty}$.
\begin{lemma}\label{lemma:BEstab}
For every $n\geq0$ and $\theta\in[0,1]$ we have
\begin{equation}\label{eq:energy0}
\left(1+\lambda_1 \nu\theta\tau\right)\|u^{n+\theta}\|^2-\|u^{n}\|^2+\|u^{n+\theta}-u^n\|^2\leq \frac{1}{\nu}\theta\tau \frac{1}{\lambda_1}\|f\|^2_{\infty}.
\end{equation}
\end{lemma}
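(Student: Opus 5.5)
Take the inner product of the BE step \eqref{eq:BENSE}, written in the operator form $\frac{1}{\theta\tau}(u^{n+\theta}-u^n)+\nu A u^{n+\theta}+B(u^{n+\theta},u^{n+\theta})=f^{n+\theta}$, with $2\theta\tau\, u^{n+\theta}$. This is the standard backward Euler energy argument applied to the half step.

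First, the time-difference term is handled by the polarization identity
\[
2(u^{n+\theta}-u^n,u^{n+\theta})=\|u^{n+\theta}\|^2-\|u^n\|^2+\|u^{n+\theta}-u^n\|^2 ,
\]
which produces the left-hand side of \eqref{eq:energy0} apart from the $\lambda_1$ term. Second, the nonlinear term vanishes: by the skew-symmetry \eqref{triformzero}, $b(u^{n+\theta},u^{n+\theta},u^{n+\theta})=0$. Third, the viscous term gives $2\nu\theta\tau\|\nabla u^{n+\theta}\|^2$.

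This leaves the identity
\[
\|u^{n+\theta}\|^2-\|u^n\|^2+\|u^{n+\theta}-u^n\|^2+2\nu\theta\tau\|\nabla u^{n+\theta}\|^2=2\theta\tau(f^{n+\theta},u^{n+\theta}).
\]

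The forcing term is bounded with Cauchy--Schwarz, Poincar\'e--Friedrichs \eqref{PF} and Young's inequality:
\[
2\theta\tau(f^{n+\theta},u^{n+\theta})\le 2\theta\tau\,\frac{1}{\sqrt{\lambda_1}}\|f\|_\infty\|\nabla u^{n+\theta}\| \le \nu\theta\tau\|\nabla u^{n+\theta}\|^2+\frac{\theta\tau}{\nu\lambda_1}\|f\|_\infty^2 .
\]
We absorb the gradient term into the left-hand side, which leaves $\nu\theta\tau\|\nabla u^{n+\theta}\|^2$ there. Applying \eqref{PF} once more gives $\nu\theta\tau\|\nabla u^{n+\theta}\|^2\ge\lambda_1\nu\theta\tau\|u^{n+\theta}\|^2$, and \eqref{eq:energy0} follows.

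We will need $\|f^{n+\theta}\|\le\|f\|_\infty$, taking $f^{n+\theta}$ as a value or average of $f$ near $t_{n+\theta}$. The case $\theta=0$ is trivial after multiplying through, since then $u^{n+\theta}=u^n$. There is no real obstacle here. The only care needed is to split the Young inequality so that exactly half of the viscous dissipation remains, which makes the constants match those in the statement.
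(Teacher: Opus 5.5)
Your proposal is correct and is essentially the paper's proof. Both test the half-step with a multiple of $\theta\tau u^{n+\theta}$ and use the polarization identity together with $b(u^{n+\theta},u^{n+\theta},u^{n+\theta})=0$. The forcing term is then split by H\"older--Young--Poincar\'e so that half the viscous term survives, and \eqref{PF} turns that term into $\lambda_1\nu\theta\tau\|u^{n+\theta}\|^2$. The paper tests \eqref{eq:STAR} rather than \eqref{eq:BENSE}; this is equivalent because $u^{n+\theta}-u^n=\theta(u^{n+1}-u^n)$, and it makes your $\theta=0$ remark automatic.
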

\begin{proof}
    Testing \eqref{eq:STAR} with $\theta\tau u^{n+\theta}$ in $H$ and using \eqref{triformzero}, the divergence theorem, the polarization identity, and the H\"{o}lder, Young, and Poincar\'{e}-Friedrichs \eqref{PF} inequalities we obtain
    \begin{confidential}
    {\color{red}
    \begin{align}
        ( u^{n+\theta},u^{n+\theta})-( u^n,u^{n+\theta}) -\theta\tau\nu( \Delta u^{n+\theta},u^{n+\theta}) +\theta\tau ( \nabla p^{n+\theta},u^{n+\theta}) = \theta\tau( f^{n+\theta},u^{n+\theta}).
    \end{align}
    Using the divergence theorem and polarization identity,
    \begin{align}
        \frac{1}{2}\|u^{n+\theta}\|^2
        &
        -\frac{1}{2}\|u^n\|^2
        +\frac{1}{2}\|u^{n+\theta}-u^n\|^2 
        -\nu\theta\tau\int_{\partial\Omega}\nabla u^{n+\theta}u^{n+\theta}\cdot \hat{n} 
        + \nu\theta\tau\|\nabla u^{n+\theta}\|^2
        \\
        &+\theta\tau\int_{\partial\Omega}p^{n+\theta}u^{n+\theta}\cdot\hat{n} -\theta\tau\int_{\Omega}p^{n+\theta}\nabla\cdot u^{n+\theta} 
        = 
        \theta\tau (f^{n+\theta},u^{n+\theta})
    \end{align}

    And using the fact that $\nabla \cdot u^{n+\theta}=0$ and $u=0$ on $\partial\Omega$, multiplying by 2, and using the H\"{o}lder, Young, and Poincar\'{e}-Friedrichs inequalities:

    \begin{align}
        \|u^{n+\theta}\|^2
        &
        -\|u^n\|^2
        +\|u^{n+\theta}-u^n\|^2  
        + 2\nu\theta\tau\|\nabla u^{n+\theta}\|^2
        \\
        &
        \leq
        2\theta\tau \|f^{n+\theta}\|\| u^{n+\theta}||
        \\
        &
        \leq
        \frac{\theta\tau}{\nu}\frac{1}{\lambda_1}\|f^{n+\theta}\|^2 + \nu\theta\tau\|\nabla u^{n+\theta}||^2
        \end{align}

\begin{align}
        \frac{1}{2}\|u^{n+\theta}\|^2
        -\frac{1}{2}\|u^n\|^2
        +\frac{1}{2}\|u^{n+\theta}-u^n\|^2 
        + \nu\theta\tau\|\nabla u^{n+\theta}\|^2
        = 
        \theta\tau (f^{n+\theta},u^{n+\theta})
    \end{align}

    Multiplying by 2 and using the H\"{o}lder, Young, and Poincar\'{e}-Friedrichs inequalities:
        }
\end{confidential}
   \begin{confidential}
   {\color{red}
    \begin{align}
\bigg(1+\lambda_1 \nu\theta\tau\bigg)&\|u^{n+\theta}\|^2-\|u^{n}\|^2+\|u^{n+\theta}-u^n\|^2\leq \frac{1}{\nu}\theta\tau \frac{1}{\lambda_1}\|f^{n+\theta}\|^2\\
&\leq \frac{1}{\nu}\theta\tau \frac{1}{\lambda_1}\|f\|^2_{\infty}
\end{align}

\begin{align}
\bigg(1+\lambda_1 \nu\theta\tau\bigg)&\|u^{n+\theta}\|^2-\|u^{n}\|^2+\|u^{n+\theta}-u^n\|^2\leq \frac{1}{\nu}\theta\tau \frac{1}{\lambda_1}\|f\|^2_{\infty}
\end{align}
}
\end{confidential}
    \begin{align}
        \|u^{n+\theta}\|^2
        &
        -\|u^n\|^2
        +\|u^{n+\theta}-u^n\|^2  
        + \nu\theta\tau\|\nabla u^{n+\theta}\|^2
        \leq
        \frac{\theta\tau}{\nu}\frac{1}{\lambda_1}\|f^{n+\theta}\|^2.
    \end{align}
    Using the Poincar\'{e}-Friedrichs inequality \eqref{PF} gives the relation \eqref{eq:energy0}.
\end{proof}

In the rest of the paper we assume the following time-step restriction 
\begin{equation}\label{eq:step-condition}
    \tau\leq \frac{1}{\lambda_1\nu} =: \kappa_1 .
\end{equation}
The next step is to write the inequality \eqref{eq:energy0} in terms of $u^{n+1}$ and $u^n$ using the extrapolation \eqref{eq:FENSE}. To simplify the presentation, we use the following notations
\begin{align}
\label{eq:alpha}
\alpha_\theta
& 
:=
 \theta 
 - \frac{1}{2} ( 2\theta - 1 ) ( 1 -  \theta )    \lambda_1 \nu \tau 
+ 
\frac{1}{2}
\Big[ ( 2 \theta  - 1 ) ( 1 - \theta ) \Big( 4\theta 
 - ( 1 - \theta )  ( 2 \theta  + 1 )   \lambda_1 \nu  \tau \Big) 
 %\Big]^{1/2}
%\Big( 
\lambda_1 \nu  \tau 
%\Big)^{1/2}
\Big]^{1/2}
,
\\
\label{eq:epsilon} 
\varepsilon_\theta  
& 
:= 
\lambda_1 \nu ( 2\theta - 1)  \tau
,
\\
a_\theta 
&
:= 
\frac{1}{2} \Big\{
( 2 \theta - 1 ) \Big( 4 \theta  - \lambda_1 \nu ( 2\theta + 1 ) ( 1 - \theta )  \tau
 \Big)^{1/2}
- 
\Big( 
\lambda_1 \nu ( 1 - \theta )  \tau
\Big)^{1/2}
\Big\}
,
\\
\label{eq:b}
b_\theta 
& 
:=
\frac{1}{2} \Big\{
( 2 \theta - 1 ) \Big( 4 \theta  - \lambda_1 \nu ( 2\theta + 1 ) ( 1 - \theta )  \tau
 \Big)^{1/2}
+ 
\Big( 
\lambda_1 \nu ( 1 - \theta )  \tau
\Big)^{1/2}
\Big\}
.
\end{align}

We note that, under the assumption \eqref{eq:step-condition} on the size of the time step, for all $\theta\in\left(\frac{1}{2},1\right)$, we have that  
\begin{align}\label{1star}
    \alpha_\theta
\in (\mbox{$\frac{1}{2}$} , \mbox{$\frac{3}{2}$}], \qquad \varepsilon_\theta > 0.
\end{align}

The next result rearranges the left-hand-side of \eqref{eq:energy0} in order to extend the dissipation result for the fractional time $\|u^{n+\theta}\|$ to the integer time $\|u^n\|$.

\begin{confidential}
    {\color{red}
\begin{align}
\label{eq:alpha-epsilon-ab-bounds}
\alpha
\in (\mbox{$\frac{1}{2}$} , \mbox{$\frac{3}{2}$}] 
,
\quad
\varepsilon > 0, 
\quad 
a, b \in \mathbb{R}
\qquad 
\forall \theta \in (\mbox{$\frac{1}{2} , 1$}).
\end{align}

    Now we prove the bounds on $\alpha$, that is, $\frac{1}{2}<\alpha\leq\frac{3}{2}$. We note that these bounds are not sharp, but they are good enough for us.\\
The proofs follow by direct computation:\\
Recall 
\begin{align*}
    \alpha & = 
 \theta 
 - \frac{1}{2} ( 2\theta - 1 ) ( 1 -  \theta )    \lambda_1 \nu \tau 
\\
& \qquad
+ 
\frac{1}{2}
\Big[ ( 2 \theta  - 1 )(1-\theta) \Big( 4\theta 
 - ( 1 - \theta )  ( 2 \theta  + 1 )   \lambda_1 \nu  \tau \Big) 
\lambda_1 \nu  \tau 
\Big]^{1/2}
\end{align*}
We will show that 
\begin{align*}
    & - \frac{1}{2} ( 2\theta - 1 ) ( 1 -  \theta )    \lambda_1 \nu \tau 
\\
& \qquad
+ 
\frac{1}{2}
\Big[ ( 2 \theta  - 1 )(1-\theta) \Big( 4\theta 
 - ( 1 - \theta )  ( 2 \theta  + 1 )   \lambda_1 \nu  \tau \Big) 
\lambda_1 \nu  \tau 
\Big]^{1/2} \geq 0
\end{align*}
If $\theta=1$, then the above expression equals 0, so the inequality is true.\\
Now if $\theta\in(\frac{1}{2},1)$:\\
\begin{align*}
 & - \frac{1}{2} ( 2\theta - 1 ) ( 1 -  \theta )    \lambda_1 \nu \tau 
\\
& \qquad
+ 
\frac{1}{2}
\Big[ ( 2 \theta  - 1 )(1-\theta) \Big( 4\theta 
 - ( 1 - \theta )  ( 2 \theta  + 1 )   \lambda_1 \nu  \tau \Big) 
\lambda_1 \nu  \tau 
\Big]^{1/2} \geq 0
\\
&
\iff
\\
&
\Bigg[(2\theta-1)(1-\theta)\lambda_1 \nu  \tau \Big( 4\theta 
 - ( 1 - \theta )  ( 2 \theta  + 1 )   \lambda_1 \nu  \tau \Big) \Bigg]^{1/2}
 \geq
 (2\theta-1)(1-\theta)\lambda_1 \nu  \tau
 \\
 &
 \iff
 \\
 &
 (2\theta-1)(1-\theta)\lambda_1 \nu  \tau \Big( 4\theta 
 - ( 1 - \theta )  ( 2 \theta  + 1 )   \lambda_1 \nu  \tau \Big)
 \geq
 (2\theta-1)^2(1-\theta)^2\lambda_1^2\nu^2  \tau^2
 \\
 &
 \iff
 \\
 &
 \Big( 4\theta 
 - ( 1 - \theta )  ( 2 \theta  + 1 )   \lambda_1 \nu  \tau \Big)
 \geq
 (2\theta-1)(1-\theta)\lambda_1 \nu  \tau
 \\
 &
 \iff
 \\
 &
 4\theta -\lambda_1 \nu  \tau\theta+2\theta^2\lambda_1 \nu  \tau-\lambda_1 \nu  \tau
 \geq
 3\theta \lambda_1 \nu  \tau -2 \theta^2 \lambda_1 \nu  \tau-\lambda_1 \nu  \tau
 \\
 &
 \iff
 \\
 &
 4\theta^2\lambda_1 \nu  \tau-4\lambda_1 \nu  \tau\theta +4\theta \geq 0
 \\
 &
 \iff
 \\
 &
 \theta(\lambda_1 \nu  \tau\theta-\lambda_1 \nu  \tau+1)\geq 0
 \\
 &
 \iff
 \\
 &
 \lambda_1 \nu  \tau(\theta-1)\geq -1
\end{align*}

And note that, by the time-step restriction, $0\leq \lambda_1 \nu  \tau\leq 1$. And for 
$\theta\in(1/2,1)$, $(\theta-1)\geq 1/2-1=-\frac{1}{2}$. So $\lambda_1 \nu  \tau(\theta-1)\geq -1$, and the lower bound is true.

 So
\begin{align*}
    \alpha &= \theta - \frac{1}{2} ( 2\theta - 1 ) ( 1 -  \theta )    \lambda_1 \nu \tau 
\\
& \qquad
+ 
\frac{1}{2}
\Big[ ( 2 \theta  - 1 ) \Big( 4\theta 
 - ( 1 - \theta )  ( 2 \theta  + 1 )   \lambda_1 \nu  \tau \Big) \Big]^{1/2}
\Big( 
\lambda_1 \nu ( 1 - \theta ) \tau 
\Big)^{1/2} \geq \theta \geq \frac{1}{2}.
\end{align*}
So we have the lower bound. Now we want to prove that $\alpha\leq\frac{3}{2}$ for $\theta\in(\frac{1}{2},1)$. \\
That is, we want to show:
\begin{align*}
    \alpha &= \theta - \frac{1}{2} ( 2\theta - 1 ) ( 1 -  \theta )    \lambda_1 \nu \tau 
\\
& \qquad
+ 
\frac{1}{2}
\Big[ ( 2 \theta  - 1 )(1-\theta)\lambda_1 \nu  \tau \Big( 4\theta 
 - ( 1 - \theta )  ( 2 \theta  + 1 )   \lambda_1 \nu  \tau \Big) 
\Big]^{1/2} 
\leq
\frac{3}{2}
\end{align*}
It suffices to show that
\begin{align*}
\theta 
+ 
\frac{1}{2}
\Big[ ( 2 \theta  - 1 )(1-\theta)\lambda_1 \nu  \tau \Big( 4\theta 
 - ( 1 - \theta )  ( 2 \theta  + 1 )   \lambda_1 \nu  \tau \Big) 
\Big]^{1/2} \leq \frac{3}{2} 
\end{align*}
Furthermore, it suffices to show that
\begin{align*}
\frac{1}{2}
\Big[ ( 2 \theta  - 1 )(1-\theta)\lambda_1 \nu  \tau \Big( 4\theta 
 - ( 1 - \theta )  ( 2 \theta  + 1 )   \lambda_1 \nu  \tau \Big) 
\Big]^{1/2} \leq \frac{1}{2} 
\end{align*}
Then, we would have that
\begin{align*}
\theta 
+& 
\frac{1}{2}
\Big[ ( 2 \theta  - 1 )(1-\theta)\lambda_1 \nu  \tau \Big( 4\theta 
 - ( 1 - \theta )  ( 2 \theta  + 1 )   \lambda_1 \nu  \tau \Big) 
\Big]^{1/2} 
\\
&
\leq 
1+\frac{1}{2}
\Big[ ( 2 \theta  - 1 )(1-\theta)\lambda_1 \nu  \tau \Big( 4\theta 
 - ( 1 - \theta )  ( 2 \theta  + 1 )   \lambda_1 \nu  \tau \Big) 
\Big]^{1/2} \leq \frac{1}{2}+1
\end{align*}
Which would imply our desired result:
\begin{align*}
\theta 
+ 
\frac{1}{2}
\Big[ ( 2 \theta  - 1 )(1-\theta)\lambda_1 \nu  \tau \Big( 4\theta 
 - ( 1 - \theta )  ( 2 \theta  + 1 )   \lambda_1 \nu  \tau \Big) 
\Big]^{1/2} \leq \frac{3}{2} 
\end{align*}

So let's show that
\begin{align*}
\frac{1}{2}
\Big[ ( 2 \theta  - 1 )(1-\theta)\lambda_1 \nu  \tau \Big( 4\theta 
 - ( 1 - \theta )  ( 2 \theta  + 1 )   \lambda_1 \nu  \tau \Big) 
\Big]^{1/2} \leq \frac{1}{2} .
\end{align*}

\begin{align*}
\frac{1}{2}&
\Big[ ( 2 \theta  - 1 )(1-\theta)\lambda_1 \nu  \tau \Big( 4\theta 
 - ( 1 - \theta )  ( 2 \theta  + 1 )   \lambda_1 \nu  \tau \Big) 
\Big]^{1/2} \leq \frac{1}{2} 
\\
&
\iff
\\
&
\Big[ ( 2 \theta  - 1 )(1-\theta)\lambda_1 \nu  \tau \Big( 4\theta 
 - ( 1 - \theta )  ( 2 \theta  + 1 )   \lambda_1 \nu  \tau \Big) 
\Big]^{1/2} \leq 1
\\
&
\iff
\\
&
( 2 \theta  - 1 )(1-\theta)\lambda_1 \nu  \tau \Big( 4\theta 
 - ( 1 - \theta )  ( 2 \theta  + 1 )   \lambda_1 \nu  \tau \Big) 
\leq 1
\end{align*}

And we have that
\begin{align*}
    ( 2 \theta  - 1 )&(1-\theta)\lambda_1 \nu  \tau \Big( 4\theta 
 - ( 1 - \theta )  ( 2 \theta  + 1 )   \lambda_1 \nu  \tau \Big)
 \\
 &
 \leq
 ( 2 \theta  - 1 )(1-\theta)\lambda_1 \nu  \tau 4\theta 
 \\
 &
 \leq( 2 \theta  - 1 )(1-\theta) 4\theta 
\end{align*}

Now I'll show that the maximum of this expression (when $\theta\in(1/2,1)$) is less than 1.

Let $f(\theta)=( 2 \theta  - 1 )(1-\theta) 4\theta =-8\theta^3+12\theta^2-4\theta$. Then $f'(\theta)=-24\theta^2+24\theta-4$. Using the quadratic formula, we get that the two roots or the derivative are $\theta_1=\frac{1}{2}+\frac{\sqrt{3}}{6}\in(1/2,1)$ and $\theta_2=\frac{1}{2}-\frac{\sqrt{3}}{6}\notin(1/2,1)$. So the maximum that concerns us happens at $\theta_1$. We can check it is actually a maximum because $f''(\theta)=-42\theta+24\leq 0 \; \forall\theta\in(1/2,1)$. And $f(\theta_1)=\frac{2\sqrt{3}}{9}\leq 1$, which is what we wanted to show. So $\alpha\leq\frac{3}{2}$ for all $\theta\in\left(\frac{1}{2},1\right)$.

The next result gives an expression of the left hand side of \eqref{eq:energy0} in terms of $u^{n+1}$ and $u^n$.

    }
\end{confidential}

\begin{lemma}
\label{lemma:energy-equality}
Under the time-step assumption \eqref{eq:step-condition}, for every $\theta\in\left(\frac{1}{2},1\right)$, there exist constants (depending on $\theta$) $\alpha_\theta,\varepsilon_\theta>0$ and $a_\theta,b_\theta\in\mathbb{R}$ given in \eqref{eq:alpha}-\eqref{eq:b} such that
\begin{align}
\label{eq:energy1}
( 1 + \lambda_1 \nu \theta \tau ) \|u^{n+\theta}\|^2 - \|u^n\|^2 + \|u^{n+\theta} - u^n\|^2
=
(\alpha_\theta + \varepsilon_\theta) \| u^{n+1} \|^2  - \alpha_\theta \| u^{n} \|^2 + \| a_\theta u^{n+1} - b_\theta u^n \|^2\quad \forall n\geq0.
\end{align}
\end{lemma}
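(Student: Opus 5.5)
This is a purely algebraic identity between quadratic forms in the two vectors $u^{n+1}$ and $u^n$. I would prove it by eliminating $u^{n+\theta}$ and then matching coefficients. By the extrapolation step \eqref{eq:FENSE} we have $u^{n+\theta}=\theta u^{n+1}+(1-\theta)u^n$, and hence $u^{n+\theta}-u^n=\theta(u^{n+1}-u^n)$. Substituting these into the left-hand side of \eqref{eq:energy1} and expanding with the inner product of $H$ turns it into
\[
P\|u^{n+1}\|^2 + 2Q\,(u^{n+1},u^n) + R\|u^n\|^2 .
\]
Writing $\mu:=\lambda_1\nu\tau$, the coefficients are
\[
P=(1+\mu\theta)\theta^2+\theta^2,\quad Q=(1+\mu\theta)\theta(1-\theta)-\theta^2,\quad R=(1+\mu\theta)(1-\theta)^2-1+\theta^2 .
\]
The right-hand side expands to $(\alpha_\theta+\varepsilon_\theta+a_\theta^2)\|u^{n+1}\|^2-2a_\theta b_\theta\,(u^{n+1},u^n)+(b_\theta^2-\alpha_\theta)\|u^n\|^2$.

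Since $u^{n+1}$ and $u^n$ are arbitrary elements of $H$, it suffices to verify three scalar identities:
\[
P=\alpha_\theta+\varepsilon_\theta+a_\theta^2,\qquad Q=-a_\theta b_\theta,\qquad R=b_\theta^2-\alpha_\theta .
\]

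\emph{Mixed term.} Write $a_\theta=\tfrac12(S-T)$ and $b_\theta=\tfrac12(S+T)$, where
\[
S=(2\theta-1)\bigl(4\theta-\mu(2\theta+1)(1-\theta)\bigr)^{1/2},\qquad T=(\mu(1-\theta))^{1/2}.
\]
Then $a_\theta b_\theta=\tfrac14(S^2-T^2)$. Expanding $S^2-T^2$ is a polynomial identity in $\theta$ and $\mu$, and I would check directly that it equals $-4Q=4\theta^2-4\theta(1-\theta)(1+\mu\theta)$.

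\emph{Remaining two terms.} Adding the two other identities gives $P+R=\varepsilon_\theta+a_\theta^2+b_\theta^2=\varepsilon_\theta+\tfrac12(S^2+T^2)$, which is again polynomial and checkable; this fixes $\varepsilon_\theta=\mu(2\theta-1)$. Subtracting them determines $\alpha_\theta$:
\[
2\alpha_\theta=P-R-\varepsilon_\theta-a_\theta^2+b_\theta^2=P-R-\varepsilon_\theta+ST .
\]
The cross term $ST$ is exactly the square-root term in \eqref{eq:alpha}. It therefore remains to check that $\tfrac12(P-R-\varepsilon_\theta)=\theta-\tfrac12(2\theta-1)(1-\theta)\mu$.

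\emph{Well-definedness.} The constants must be real, which needs $4\theta-\mu(2\theta+1)(1-\theta)\ge0$. This follows from \eqref{eq:step-condition}, since $\mu\le1$ gives $(2\theta+1)(1-\theta)\le 9/8<4\theta$ for $\theta>\tfrac12$; also $1-\theta>0$. Positivity of $\alpha_\theta$ and $\varepsilon_\theta$ is already recorded in \eqref{1star}, and $\varepsilon_\theta>0$ is immediate since $2\theta-1>0$.

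The main obstacle is the bookkeeping: verifying the three polynomial identities, especially the mixed-term one where $S^2$ contains the factor $(2\theta-1)^2$. I would organize each as a polynomial in $\mu$ and compare the coefficients of $\mu^0$, $\mu^1$ and $\mu^2$ separately.
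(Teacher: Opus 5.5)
Your route is the paper's: substitute $u^{n+\theta}=\theta u^{n+1}+(1-\theta)u^n$, expand, and match the three coefficients, which are exactly \eqref{eq:exp0}--\eqref{eq:expend}. Your $P,Q,R$ are correct, and so is the sum/difference bookkeeping with $b_\theta^2-a_\theta^2=ST$.

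\textbf{The gap.} It is in the verification you defer to ``direct checking.'' You took $S$ verbatim from the printed $a_\theta,b_\theta$, with the factor $(2\theta-1)$ outside the square root. With that $S$, none of the three identities holds. For the mixed term (with your $\mu=\lambda_1\nu\tau$) one finds
\[
S^2-T^2+4Q=-2(1-\theta)(2\theta-1)\bigl(4\theta-\mu(2\theta+1)(1-\theta)\bigr)<0
\]
for every $\theta\in(\tfrac12,1)$ and every admissible $\tau$. For instance, at $\theta=\tfrac34$ the constant term of $S^2-T^2$ is $\tfrac34$ while that of $-4Q$ is $\tfrac32$. Likewise, your claim that $ST$ is exactly the square-root term of \eqref{eq:alpha} is false: the two differ by a factor $(2\theta-1)^{1/2}$. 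The factor $(2\theta-1)^2$ in $S^2$ that you flagged as the delicate point is precisely where it breaks.

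\textbf{The fix.} The correct $S$ is forced by the identities themselves. Adding \eqref{eq:exp0}--\eqref{eq:expend} gives $(a_\theta+b_\theta)^2+\varepsilon_\theta=4\theta(2\theta-1)+\mu\theta(2\theta-1)^2$, so with $\varepsilon_\theta=\mu(2\theta-1)$ one needs
\[
S=\bigl[(2\theta-1)\bigl(4\theta-\mu(2\theta+1)(1-\theta)\bigr)\bigr]^{1/2}.
\]
That is, $(2\theta-1)^{1/2}$ should replace $(2\theta-1)$ in the definitions of $a_\theta,b_\theta$; this is also the $S$ with which the printed $\alpha_\theta$ is consistent. With it, your three checks close:
\begin{itemize}
\item $S^2-T^2=4\theta(2\theta-1)-4\mu\theta^2(1-\theta)=-4Q$;
\item $\varepsilon_\theta+\tfrac12(S^2+T^2)=2\theta(2\theta-1)+\mu\theta(2\theta^2-2\theta+1)=P+R$;
\item $\tfrac12(P-R-\varepsilon_\theta)=\theta-\tfrac12(2\theta-1)(1-\theta)\mu$, and $\tfrac12 ST$ now equals the last term of \eqref{eq:alpha}.
\end{itemize}
Your well-definedness remark still applies. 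The paper's proof simply asserts \eqref{eq:exp0}--\eqref{eq:expend} for the constants as printed, so it needs the same correction. The lemma is true, but only with the corrected $a_\theta,b_\theta$.
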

\begin{proof}
The conclusion follows by direct calculation. First, using the extrapolation \eqref{eq:FENSE}
 into the left hand side of \eqref{eq:energy0} gives
\begin{align*}
&
\left( 1 + \lambda_1 \nu \theta \tau \right) \|u^{n+\theta}\|^2 - \|u^n\|^2 + \|u^{n+\theta} - u^n\|^2
\\
& 
=
 \theta^2  \left( 2 + \lambda_1 \nu \theta \tau \right) \| u^{n+1} \|^ 2  
+ 2\theta \left[  (1 - \theta) \left( 1 + \lambda_1 \nu \theta \tau \right)   - \theta  \right]
( u^{n+1} , u^{n} )
\\
& \qquad
+
	( \theta - 1) \left[ 
\left( 1 + \lambda_1 \nu \theta \tau \right) ( \theta - 1 ) + ( \theta + 1 ) \right] 
\| u^{n} \|^ 2  .  
\end{align*}
\begin{confidential}
    {\color{red}
    \begin{align*}
&
\Big( 1 + \lambda_1 \nu \theta \tau \Big) \|u^{n+\theta}\|^2 - \|u^n\|^2 + \|u^{n+\theta} - u^n\|^2
\\
& 
=
\Big( 1 + \lambda_1 \nu \theta \tau \Big) \| \theta u^{n+1} + (1 - \theta) u^n \|^2 - \|u^n\|^2 + \| \theta u^{n+1} + (1 - \theta) u^n  - u^n\|^2
\\
& 
=
\Big( 1 + \lambda_1 \nu \theta \tau \Big) \| \theta u^{n+1} + (1 - \theta) u^n \|^2 
- \|u^n\|^2 
+ \theta ^2 \|  u^{n+1} - u^n\|^2
\\
& 
=
\Big( 1 + \lambda_1 \nu \theta \tau \Big) \Big( \theta^2 \| u^{n+1} \|^ 2  + 2\theta(1 - \theta) ( u^{n} ,   u^{n+1} ) + (1 - \theta)^2 \| u^n \|^2 \Big)
\\
&
-  \|u^n\|^2 
+ \theta ^2 \Big( \|  u^{n+1} \|^2 - 2( u^{n+1} , u^{n} )  + \| u^n\|^2 \Big)
\\
& 
=
\bigg[ \Big( 1 + \lambda_1 \nu \theta \tau \Big)  \theta^2 + \theta^2 \bigg] \| u^{n+1} \|^ 2  
+ \bigg[  2\theta(1 - \theta) \Big( 1 + \lambda_1 \nu \theta \tau \Big)   - 2 \theta^2  \bigg]
( u^{n+1} , u^{n} )
\\
& 
\qquad
+\bigg[ 
\Big( 1 + \lambda_1 \nu \theta \tau \Big) ( 1 - \theta)^2 + (\theta^2 - 1) \bigg] 
\| u^{n} \|^ 2  .  
\\
& 
=
 \theta^2  \Big( 2 + \lambda_1 \nu \theta \tau \Big) \| u^{n+1} \|^ 2  
+ 2\theta \bigg[  (1 - \theta) \Big( 1 + \lambda_1 \nu \theta \tau \Big)   - \theta  \bigg]
( u^{n+1} , u^{n} )
\\
& \qquad
+
	( \theta - 1) \bigg[ 
\Big( 1 + \lambda_1 \nu \theta \tau \Big) ( \theta - 1 ) + ( \theta + 1 ) \bigg] 
\| u^{n} \|^ 2  .  
\end{align*}
    }
\end{confidential}

Next, we note that by \eqref{1star} the constants $\alpha_\theta,\varepsilon_\theta>0$, $a_\theta,b_\theta\in\mathbb{R}$ from \eqref{eq:alpha}-\eqref{eq:b} satisfy
\begin{align} 
\label{eq:exp0}
\alpha_\theta + \varepsilon_\theta + a_\theta^2 
 &= 
 \theta^2  \left( 2 + \lambda_1 \nu \theta \tau \right) ,
\\ 
b_\theta^2 - \alpha _\theta
&= 
	( \theta - 1) \left[ 
\left( 1 + \lambda_1 \nu \theta \tau \right) ( \theta - 1 ) + ( \theta + 1 ) \right] ,
\\
2 a_\theta b_\theta 
&= 
\label{eq:expend}
2\theta \left[  \theta - (1 - \theta) \left( 1 + \lambda_1 \nu \theta \tau \right) \right],
\end{align}
\begin{confidential}
    {\color{red}
    These expressions are satisfied by the constants $\alpha,\varepsilon,a,b$ we picked in \eqref{eq:alpha}-\eqref{eq:b}. The choice of these constants follows a long computation that is shown in Appendix \ref{sec:appendix1}.
    }
\end{confidential}
and therefore
\begin{align}
\label{eq:goal1}
& 
(\alpha_\theta + \varepsilon_\theta) \| u^{n+1} \|^2  - \alpha_\theta \| u^{n} \|^2 + \| a_\theta u^{n+1} - b_\theta u^n \|^2
\\
& 
= 
(\alpha_\theta + \varepsilon_\theta + a_\theta^2 ) \| u^{n+1} \|^2  + ( b_\theta^2 - \alpha_\theta)  \| u^{n} \|^2  -  2a_\theta b_\theta  ( u^{n+1} , u^n )
\notag
\\
& 
\notag
= \theta^2  \left( 2 + \lambda_1 \nu \theta \tau \right) \| u^{n+1} \|^ 2  
+ 
	( \theta - 1) \left[ 
\left( 1 + \lambda_1 \nu \theta \tau \right) ( \theta - 1) + ( \theta + 1) \right] 
\| u^{n} \|^ 2 \\
&\qquad+ 
2\theta \left[  (1 - \theta) \left( 1 + \lambda_1 \nu \theta \tau \right)   - \theta  \right]
( u^{n+1} , u^{n} ),
\notag
\end{align}
which finally gives \eqref{eq:energy1}. 
\end{proof}
\noindent

\textbf{Remark}. We emphasize that, when $\theta=\frac{1}{2}$, there is no $\alpha_\theta,\varepsilon_\theta>0$ such that \eqref{eq:energy1} holds. We first note that for \eqref{eq:energy1} to hold, the relations \eqref{eq:exp0}-\eqref{eq:expend} are necessary and sufficient. Adding these expressions we have that 
\begin{align*}
& 
(a_\theta + b_\theta)^2 + \varepsilon_\theta
 = 
4 \theta^2 - 1
 + ( 2\theta - 1 )^2   \left( 1 + \lambda_1 \nu \theta \tau \right),
\end{align*}
which, for $\theta=\frac{1}{2}$, shows there is no $\varepsilon_\theta>0$ satisfying \eqref{eq:energy1}.

\begin{confidential}
At this point, it is important to mention why this argument will not work for the case when $\theta=\frac{1}{2}$, which corresponds to the midpoint rule. 

The goal of our argument is to pick $\alpha,\varepsilon>0$ and $a,b\in\mathbb{R}$ such that 
    {\color{red}
    \begin{align*}
    (\alpha + \varepsilon)&\|u_{n+1}\|^2-\alpha\|u_{n}\|^2+\|a_nu_{n+1}-b_nu_n\|^2\\
    &=(1+\delta)\|u_{n+\theta}\|^2-\|u_{n}\|^2+\|u_{n+1}-u_n\|^2\\
    &\leq C\|f\|_{\infty}^2
\end{align*}

\begin{align*}
    (\alpha + \varepsilon)&\|u_{n+1}\|^2-\alpha\|u_{n}\|^2+\|a_nu_{n+1}-b_nu_n\|^2
    \leq C\|f\|_{\infty}^2
\end{align*}
for some constant $C$, so that we can have 
\[\|u_{n+1}\|^2\leq\frac{1}{1+\varepsilon/\alpha}\|u_{n}\|^2+C\frac{1}{\alpha+\varepsilon}\|f\|_{\infty}^2\]

Which, applying the fact that \[\frac{1}{1+\varepsilon/\alpha}\leq e^{-K\varepsilon}\] for some constant $K$ and recursion, gets us the $L^2$ bound.\\

Adding the expressions \eqref{eq:exp0}-\eqref{eq:expend} gives
\begin{align*}
& 
(a + b)^2 + \varepsilon
 = 
4 \theta^2 - 1
 + ( 2\theta - 1 )^2   \Big( 1 + \lambda_1 \nu \theta \tau \Big).
\end{align*}

If $\theta=\frac{1}{2}$, \[(a+b)^2+\varepsilon=0\]
So we can't find $\varepsilon>0$ that satisfy our conditions. 
   }
\end{confidential}

\begin{confidential}
    {\color{red}
    \begin{align*}
& 
(a + b)^2 + \varepsilon
= 
 \theta^2  \Big( 2 + \lambda_1 \nu \theta \tau \Big) 
+
	( \theta - 1) \bigg[ 
\Big( 1 + \lambda_1 \nu \theta \tau \Big) ( \theta - 1 ) + ( \theta + 1 ) \bigg] 
\\
&
 + 2\theta \bigg[  \theta - (1 - \theta) \Big( 1 + \lambda_1 \nu \theta \tau \Big) \bigg]
 \\
 & 
 = 
 \theta^2 
 +  \theta^2  \Big( 1 + \lambda_1 \nu \theta \tau \Big) 
	+ \Big( 1 + \lambda_1 \nu \theta \tau \Big) ( 1 - \theta)^2 + ( \theta^2 - 1 )
\\
&
 + 2\theta^2 
 - 2\theta (1 - \theta) \Big( 1 + \lambda_1 \nu \theta \tau \Big)
 \\
 & 
 = 
4 \theta^2 -1
 + \big[  \theta^2  + ( 1 - \theta)^2  - 2\theta (1 - \theta)  \big] 
 \Big( 1 + \lambda_1 \nu \theta \tau \Big) 
 \\
 &
 = 
4 \theta^2 - 1
 + ( 2\theta - 1 )^2   \Big( 1 + \lambda_1 \nu \theta \tau \Big),
\end{align*}

\begin{align*}
& 
(a + b)^2 + \varepsilon
 = 
4 \theta^2 - 1
 + ( 2\theta - 1 )^2   \Big( 1 + \lambda_1 \nu \theta \tau \Big),
\end{align*}

When $\theta=\frac{1}{2}$, the $\theta$-method corresponds to the Crank-Nicolson scheme, which is second-order accurate. It is important to note that $\theta=\frac{1}{2}$ is not the only case where the $\theta$-method is second-order, since this property is also satisfied when $\theta=\frac{1}{2}+\tau$.\\
 }
\end{confidential}

\begin{confidential}
{\color{red}
(Proof of second order accuracy:)
We have \[\frac{y^{n+1}-y^n}{\tau}=f(t_{n+\theta},y^{n+\theta})\]
\[y^{n+\theta}=\theta y^{n+1}+(1-\theta)y^n\]
\[t_{n+\theta}=t_n+\theta\tau\]
By Taylor's theorem,
\[y^{n+1}=y_n+\tau y_n'+\frac{1}{2}\tau^2y''_n+O(\tau^3)\]
\begin{align*}
f(t_{n+\theta},y^{n+\theta})&=y'_n+\theta\tau f_t(t_n,y^n)+\theta\tau f_y(t_n,y^n)(y'_n+\frac{1}{2}y''_n)+O(\tau^2)\\
&=y'_n+\theta\tau y''_n+O(\tau^2)
\end{align*}
So \begin{align*}
    \frac{y^{n+1}-y^n}{\tau}&-f(t_{n+\theta},y^{n+\theta})\\ &=\frac{\tau y'_n+\frac{1}{2}\tau^2y''_n+O(\tau^3)}{\tau}-y'_n-\theta\tau y''_n+O(\tau^2)\\
    &= \frac{1}{2}\tau y''_n-\tau\theta y''_n+O(\tau^2)\\
    &= \Big(\frac{1}{2}\tau-\tau\theta\Big) y''_n+O(\tau^2)\\
\end{align*}
If $\theta = \frac{1}{2}+\tau$, we get second-order accuracy.\\ 
}
\end{confidential}

The next proposition is the main result of this section: a uniform bound of the semidiscrete-in-time velocity $u^n$ in $H$ in terms of the initial data $u_0$ and the forcing term $f$.

\begin{proposition}\label{prop:L2bound}
Assume that the time-step condition \eqref{eq:step-condition} holds.
Then
\begin{align} 
\label{eq:energy2}
\| u^{n} \|^2 
&
\leq 
e^{ - \frac{1}{15} \lambda_1 \nu( 2\theta - 1)n\tau }
\| u_{0} \|^2
+
3 \frac{1}{\lambda_1\nu^2} \frac{ 1 }{ ( 2 \theta - 1 ) }
  \| f \|^2_{\infty} 
 \qquad \forall n\geq 0
,
\end{align}
and there is a constant 
\[K_1(\|u_0\|,\|f\|_{\infty}) := \|u_0\|^2+ 3 \frac{1}{\lambda_1\nu^2} \frac{ 1 }{ ( 2 \theta - 1 ) }
  \|f\|^2_{\infty}\] such that 
\begin{equation}
\label{eq:energy2.1}
    \|u^{n}\|^2\leq K_1 \qquad \forall n\geq 0.
\end{equation}
\end{proposition}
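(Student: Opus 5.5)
Combine Lemma \ref{lemma:BEstab} with Lemma \ref{lemma:energy-equality}: the left-hand side of \eqref{eq:energy0} equals the right-hand side of \eqref{eq:energy1}. Dropping the nonnegative term $\|a_\theta u^{n+1}-b_\theta u^n\|^2$ gives
\begin{equation*}
(\alpha_\theta+\varepsilon_\theta)\|u^{n+1}\|^2 \leq \alpha_\theta\|u^n\|^2 + \frac{\theta\tau}{\nu\lambda_1}\|f\|_\infty^2 .
\end{equation*}
Dividing by $\alpha_\theta+\varepsilon_\theta$ yields the one-step recursion $\|u^{n+1}\|^2\le r\|u^n\|^2 + c$, with
\begin{equation*}
r=\frac{1}{1+\varepsilon_\theta/\alpha_\theta},\qquad c=\frac{\theta\tau\|f\|_\infty^2}{\nu\lambda_1(\alpha_\theta+\varepsilon_\theta)}.
\end{equation*}
Iterating gives $\|u^n\|^2\le r^n\|u_0\|^2 + c\sum_{k=0}^{n-1}r^k\le r^n\|u_0\|^2+\frac{c}{1-r}$, so the proof reduces to two scalar estimates.

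For the decay factor, \eqref{1star} gives $\alpha_\theta\le 3/2$, hence $x:=\varepsilon_\theta/\alpha_\theta\ge \tfrac{2}{3}\lambda_1\nu(2\theta-1)\tau$. The time-step restriction \eqref{eq:step-condition} gives $\lambda_1\nu\tau\le1$ and $2\theta-1<1$, so $x\le 2\varepsilon_\theta\le 2$. On a bounded interval $x\in[0,2]$, concavity of $\log(1+x)$ gives $\log(1+x)\ge \tfrac{\log 3}{2}x\ge x/2$. Therefore
\begin{equation*}
r\le e^{-x/2}\le e^{-\frac13\lambda_1\nu(2\theta-1)\tau},
\end{equation*}
which is stronger than the required $e^{-\frac1{15}\lambda_1\nu(2\theta-1)\tau}$. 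If I have miscounted, the generous $1/15$ in the statement leaves room for cruder bounds, such as $\alpha_\theta+\varepsilon_\theta\le 5/2$ together with $1/(1+x)\le e^{-x/(1+x)}$. Raising to the $n$th power gives the exponential term in \eqref{eq:energy2}.

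For the forcing term,
\begin{equation*}
\frac{c}{1-r} = \frac{c(\alpha_\theta+\varepsilon_\theta)}{\varepsilon_\theta} = \frac{\theta\tau\|f\|_\infty^2}{\nu\lambda_1\,\varepsilon_\theta} = \frac{\theta}{\lambda_1^2\nu^2(2\theta-1)}\|f\|_\infty^2 .
\end{equation*}
The expected factor involves $\frac{1}{\lambda_1\nu^2}$ rather than $\frac{1}{\lambda_1^2\nu^2}$. I will check whether the statement's constant should carry $\lambda_1^{-2}$, consistent with \eqref{eq:ctsL2}, or whether it absorbs a convention. With $\theta\le1\le3$ the bound has the stated form up to this normalization of $\lambda_1$. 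Finally, since $r^n\le1$, the uniform bound \eqref{eq:energy2.1} with $K_1$ follows immediately.

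The main obstacle is purely bookkeeping: converting $1/(1+\varepsilon_\theta/\alpha_\theta)$ into an exponential with a clean constant that is uniform in $\theta\in(\tfrac12,1)$ and $\tau\le\kappa_1$. The bounds on $\alpha_\theta$ in \eqref{1star} together with \eqref{eq:step-condition} are exactly what keep $x$ bounded, and that boundedness is what makes this conversion possible.
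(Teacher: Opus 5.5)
Your argument is the paper's own: combine \eqref{eq:energy0} with the identity of Lemma~\ref{lemma:energy-equality}, drop $\|a_\theta u^{n+1}-b_\theta u^n\|^2$, iterate the one-step recursion, and turn $1/(1+\varepsilon_\theta/\alpha_\theta)$ into an exponential using $\alpha_\theta\in(\frac12,\frac32]$ and $\tau\le\kappa_1$. Your constants are sharper. The chord bound $\log(1+x)\ge\frac{\log 3}{2}x$ on $[0,2]$ gives rate $\frac13$, where the paper's comparison with $e^{-x/10}$ gives $\frac1{15}$. Summing the geometric series exactly gives the factor $\theta$, where the paper's cruder estimates give $3$.

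On the power of $\lambda_1$ you are right, and you should drop the hedge about a ``normalization'', because there is none. The paper's proof loses the factor $\frac{1}{\lambda_1}$ of \eqref{eq:energy0} when it writes the recursion as $\|u^{n+1}\|^2\le\frac{1}{1+\varepsilon_\theta/\alpha_\theta}\|u^n\|^2+\frac{1}{\nu}\frac{\theta\tau}{\alpha_\theta+\varepsilon_\theta}\|f\|_\infty^2$. That slip is the only source of $\frac{1}{\lambda_1\nu^2}$. The correct constant carries $\lambda_1^{-2}$, as in \eqref{eq:ctsL2}; only $\|f\|^2/(\lambda_1^2\nu^2)$ has the units of $\|u\|^2$.

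The printed bound is in fact false for small $\lambda_1$. Take $f=\sigma\nu\lambda_1 w_1$ with $w_1$ the normalized first Stokes eigenfunction, and $u_0=u^*$ a stationary solution, so that $u^n\equiv u^*$ solves the scheme. The energy bounds give $\|u^*\|\le\sigma$ and $\|\nabla u^*\|\le\sigma\lambda_1^{1/2}$. Testing the stationary equation with $w_1$ then gives $\|u^*\|\ge(u^*,w_1)\ge\sigma-\sigma^2/(\sqrt2\,\nu)$, independently of $\lambda_1$. The printed right-hand side, however, tends to $\frac{3\lambda_1}{2\theta-1}\sigma^2$ as $n\to\infty$. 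This is smaller than $\|u^*\|^2$ once $\lambda_1<\frac{2\theta-1}{3}$ (a large domain) and $\sigma$ is small.

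So what you have actually proved is $\|u^n\|^2\le e^{-\frac13\lambda_1\nu(2\theta-1)n\tau}\|u_0\|^2+\frac{\theta}{\lambda_1^2\nu^2(2\theta-1)}\|f\|_\infty^2$, with $K_1$ redefined accordingly. This is the correct version of the proposition, and it implies the intended one with constant $\frac{3}{\lambda_1^2\nu^2(2\theta-1)}$. State that version.
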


\begin{proof}
    From \eqref{eq:energy0} and \eqref{eq:energy1} we have
\begin{confidential}
    {\color{red}
    \begin{align*}
&
(\alpha + \varepsilon) \| u^{n+1} \|^2  - \alpha \| u^{n} \|^2 + \| a u^{n+1} - b u^n \|^2
\\
& 
=
\Big( 1 + \lambda_1 \nu \theta \tau \Big) \|u^{n+\theta}\|^2 - \|u^n\|^2 + \|u^{n+\theta} - u^n\|^2
\\
& 
\leq
\frac{1}{\nu}\theta\tau  \| f\|_{\infty}^2
,
\end{align*}
hence
\begin{align*}
&
\| u^{n+1} \|^2 
\leq
\frac{\alpha}{\alpha + \varepsilon} \| u^{n} \|^2
+
\frac{1}{\nu} \frac{1}{\alpha + \varepsilon}  \theta \tau  \| f\|_{\infty}^2
,
\end{align*}
and therefore
    }
\end{confidential}
\begin{confidential}
    {\color{red}
\begin{align*}
&
(\alpha + \varepsilon) \| u^{n+1} \|^2  - \alpha \| u^{n} \|^2 + \| a u^{n+1} - b u^n \|^2
\leq
\frac{1}{\nu}\theta\tau  \| f\|_{\infty}^2
,
\end{align*}
hence
    }    
\end{confidential}
\begin{align*}
&
\| u^{n+1} \|^2 
\leq
\frac{1}{1 + \varepsilon_\theta / \alpha_\theta} \| u^{n} \|^2
+
\frac{1}{\nu} \frac{1}{\alpha_\theta + \varepsilon_\theta}  \theta \tau  \| f\|_{\infty}^2,
\end{align*}
and inductively,
\begin{confidential}
{\color{red}
\begin{align}
\| u^{n+1} \|^2 
&
\leq
\frac{1}{1 + \varepsilon / \alpha} \| u^{n} \|^2
+
\frac{1}{\nu} \frac{1}{\alpha + \varepsilon}  \theta \tau  \| f\|_{\infty}^2
\notag
\\
&
\leq
\frac{1}{1 + \varepsilon / \alpha} 
\Big(
\frac{1}{1 + \varepsilon / \alpha } \| u^{n-1} \|^2
+
\frac{1}{\nu} \frac{1}{\alpha + \varepsilon}  \theta \tau  \| f \|^2_{\infty}
\Big)
+
\frac{1}{\nu} \frac{1}{\alpha + \varepsilon}  \theta \tau  \| f\|_{\infty}^2
\notag
\\
&
=
\frac{1}{1 + \varepsilon / \alpha} 
\frac{1}{1 + \varepsilon / \alpha } \| u^{n-1} \|^2
\notag
\\
& 
\quad
+
\frac{1}{1 + \varepsilon / \alpha} 
\frac{1}{\nu} \frac{1}{\alpha + \varepsilon}  \theta \tau  \| f \|^2_{\infty}
+
\frac{1}{\nu} \frac{1}{\alpha + \varepsilon}  \theta \tau  \| f\|_{\infty}^2
\notag
\\
&
\leq
\frac{1}{1 + \varepsilon / \alpha} 
\frac{1}{1 + \varepsilon / \alpha } 
\Big(
\frac{1}{1 + \varepsilon / \alpha} \| u^{n-2} \|^2
+
\frac{1}{\nu} \frac{1}{\alpha + \varepsilon}  \theta \tau  \| f \|^2_{\infty}
\Big)
\notag
\\
& 
\quad
+
\frac{1}{1 + \varepsilon / \alpha} 
\frac{1}{\nu} \frac{1}{\alpha + \varepsilon}  \theta \tau  \| f \|^2_{\infty}
+
\frac{1}{\nu} \frac{1}{\alpha + \varepsilon}  \theta \tau  \| f\|_{\infty}^2
\notag
\\
&
=
\frac{1}{1 + \varepsilon / \alpha} 
\frac{1}{1 + \varepsilon / \alpha } 
\frac{1}{1 + \varepsilon / \alpha} \| u^{n-2} \|^2
\notag
\\
&
\quad
+
\frac{1}{1 + \varepsilon / \alpha} 
\frac{1}{1 + \varepsilon / \alpha } 
\frac{1}{\nu} \frac{1}{\alpha + \varepsilon}  \theta \tau  \| f \|^2_{\infty}
\notag
\\
& 
\quad
+
\frac{1}{1 + \varepsilon / \alpha} 
\frac{1}{\nu} \frac{1}{\alpha + \varepsilon}  \theta \tau  \| f \|^2_{\infty}
\notag
\\
& 
\quad
+
\frac{1}{\nu} \frac{1}{\alpha + \varepsilon}  \theta \tau  \| f\|_{\infty}^2
\notag
\\
& 
=
\\
&\qquad \vdots 
\notag
\\
&
\label{eq:induction conclusion}
\leq
\Bigg(\frac{1}{1 + \varepsilon / \alpha}\Bigg)^{n+1}
\| u_{0} \|^2
\\
&
\quad
+
\Bigg(\frac{1}{1 + \varepsilon / \alpha} \Bigg)^n
\frac{1}{\nu} \frac{1}{\alpha + \varepsilon}  \theta \tau  \| f \|_{\infty}^2
\notag
\\
& 
\qquad
\vdots
\notag
\\
&
\quad
+
\Bigg(\frac{1}{1 + \varepsilon / \alpha} \Bigg)^2 
\frac{1}{\nu} \frac{1}{\alpha + \varepsilon}  \theta \tau  \| f \|^2_{\infty}
\notag
\\
& 
\quad
+
\frac{1}{1 + \varepsilon / \alpha} 
\frac{1}{\nu} \frac{1}{\alpha + \varepsilon}  \theta \tau  \| f \|^2_{\infty}
\notag
\\
& 
\quad
+
\frac{1}{\nu} \frac{1}{\alpha + \varepsilon}  \theta \tau  \| f\|_{\infty}^2
\\
&
=
\Big(\frac{1}{1 + \varepsilon / \alpha} \Big)^{n+1}
\| u_{0} \|^2
\\
&
+
\sum_{j=0}^{n} \Big(\frac{1}{1 + \varepsilon / \alpha} \Big)^{n-j} \frac{1}{\nu} \frac{1}{\alpha+\varepsilon} \theta \tau  \| f \|^2_{\infty},
%\notag
\end{align}
therefore
}
\end{confidential}
\begin{align*}
\| u^{n+1} \|^2 
&
\leq
\Big(\frac{1}{1 + \varepsilon_\theta / \alpha_\theta} \Big)^{n+1}
\| u_{0} \|^2
+
\sum_{j=1}^{n+1} \Big(\frac{1}{1 + \varepsilon_\theta / \alpha_\theta} \Big)^{n-j+1} \frac{1}{\nu} \frac{1}{\alpha_\theta+\varepsilon_\theta} \theta \tau  \| f \|^2_{\infty}.
\end{align*}
\begin{confidential}
    {\color{red}
    Recall that from \eqref{eq:step-condition} and \eqref{eq:alpha-epsilon-ab-bounds} we have that
\begin{align*}
\varepsilon / \alpha 
< 2 \varepsilon 
= 2 \lambda_1  \nu (2 \theta - 1)\tau
\leq 2 \lambda_1  \nu \tau
\leq 2,
\end{align*}
    }
\end{confidential}
\noindent
Now let $g(x) := e^{- \frac{1}{10} x} - \frac{1}{1+x}$, with $g'(x)\geq 0$ for all $x\in[0,2]$, $g(0)=0$, and $g(x)\geq 0$ for all $x\in [0,2]$. Then by \eqref{1star} we have 
 \begin{align*}
\frac{1}{1 + \varepsilon_\theta / \alpha_\theta} 
\leq e^{-\frac{1}{10} \frac{\varepsilon_\theta }{ \alpha_\theta } }
\leq e^{-\frac{1}{10}\frac{2}{3} \varepsilon_\theta  }
= e^{-\frac{1}{15} \varepsilon_\theta  }.
 \end{align*}
\begin{confidential}
 {\color{red}
 \begin{align*}
& 
g(x) := e^{- \mu x} - \frac{1}{1+x}, \qquad \forall x\in [0,2],
\\
& 
g(0) = 0,
\\
&
g' (x) 
= - \mu e^{- \mu x} + \frac{1}{(1+x)^2}.
 \end{align*}
 I need to pick $\mu$ such that $g'(x)\geq 0$ for all $x\in[0,2]$. Let's pick $\mu=\frac{1}{10}$. I need to show 
 \[\frac{1}{10}e^{-\frac{1}{10}x}\leq\frac{1}{(1+x)^2}\] 
 for all $x\in[0,2]$.
 \begin{align*}
     &\frac{1}{10}e^{-\frac{1}{10}x}\leq\frac{1}{(1+x)^2}\\
     &\iff e^{-\frac{1}{10}x}\leq\frac{10}{(1+x)^2}\\
     &\iff -\frac{1}{10}x\leq\ln\Big(\frac{10}{(1+x)^2}\Big)\\
     &\iff -x\leq10\ln\Big(\frac{10}{(1+x)^2}\Big)\\
     &\iff -10\ln\Big(\frac{10}{(1+x)^2}\Big)\leq x\\
 \end{align*}
 Since $x$ is at worst 0, it suffices to show that $-10\ln\Big(\frac{10}{(1+x)^2}\Big)\leq0$. It suffices to show that $\ln\Big(\frac{10}{(1+x)^2}\Big)\geq0$.
 \begin{align*}
     &\ln\Big(\frac{10}{(1+x)^2}\Big)\geq0\\
     &\iff \frac{10}{(1+x)^2}\geq 1\\
     &\iff 10\geq (1+x)^2\\
 \end{align*}
 And this is true for all $x\in[0,2]$. So we can pick $\mu=\frac{1}{10}$.
 Now since $g'(x)\geq 0$, $g(x)$ is increasing. Note that $g'(x)\geq 0$ for all $x\in[0,2]$ and $g(0)=0$. Thus, $g(x)\geq 0$ for all $x\in [0,2]$. So we get
 \begin{align*}
\frac{1}{1 + \varepsilon / \alpha} 
\leq e^{-\frac{1}{10} \frac{\varepsilon }{ \alpha } }
\leq e^{-\frac{1}{10}\frac{2}{3} \varepsilon  }
= e^{-\frac{2}{30} \varepsilon  }
,
 \end{align*}
 (Since \[\alpha\leq\frac{3}{2}\Rightarrow\frac{2}{3}\varepsilon\leq\frac{\varepsilon}{\alpha}\] 
 and $e^{-\nu x}$ is decreasing).
 }
\end{confidential}
\noindent 
Therefore,
\begin{align*}
\| u^{n+1} \|^2 
&
\leq
\Big(\frac{1}{1 + \varepsilon_\theta / \alpha_\theta} \Big)^{n+1}
\| u_{0} \|^2
+
\sum_{j=1}^{n+1} \Big(\frac{1}{1 + \varepsilon_\theta / \alpha_\theta} \Big)^{n-j+1} \frac{1}{\nu} \frac{1}{\alpha_\theta+\varepsilon_\theta} \theta \tau  \| f \|^2_{\infty}
\\
&
\leq
e^{ - \frac{1}{15} (n+1)\varepsilon_\theta}
\| u_{0} \|^2
+
\frac{1}{\nu}\tau  \| f \|^2_{\infty}  
\Big(\sum_{j=0}^n \Big(  \frac{1}{1 + 2/3\varepsilon_\theta } \Big)^{j} \Big) \frac{1}{1/2+2/3\varepsilon_\theta}
\\
& 
=
e^{ - \frac{1}{15} (n+1)\varepsilon_\theta}
\| u_{0} \|^2
\\
& \quad
+
\frac{1}{\nu}\tau  \| f \|^2_{\infty}  
\frac{1}{1/2+2/3\varepsilon_\theta}
\Big( 1- \Big(\frac{1}{1 + 2/3\varepsilon_\theta } \Big)^{n+1} \Big)
\Bigg(\frac{1}{1-\frac{1}{1+2/3\varepsilon_\theta}}\Bigg)
\\
& 
\leq
e^{ - \frac{1}{15} (n+1)\varepsilon_\theta}
\| u_{0} \|^2
+
\frac{1}{\nu}\tau  \| f \|^2_{\infty}  
\Big( 1- \Big(\frac{1}{1 + 2/3\varepsilon_\theta } \Big)^{n+1} \Big)
\Bigg(\frac{2}{2/3\varepsilon_\theta}\Bigg)
\\
& 
\leq
e^{ - \frac{1}{15} (n+1)\tau\nu\lambda_1 (2\theta-1)}
\| u_{0} \|^2
+
3\frac{1}{\lambda_1\nu^2}\frac{1}{2\theta-1}  \| f \|^2_{\infty}, 
\end{align*}
\begin{confidential}
{\color{red}
\begin{align*}
\| u^{n+1} \|^2 
&
\leq
\Big(\frac{1}{1 + \varepsilon / \alpha} \Big)^{n+1}
\| u_{0} \|^2
\\
&
+
\sum_{j=1}^{n+1} \Big(\frac{1}{1 + \varepsilon / \alpha} \Big)^{n-j+1} \frac{1}{\nu} \frac{1}{\alpha+\varepsilon} \theta \tau  \| f \|^2_{\infty}
\\
&
\leq
e^{ - \frac{2}{30} (n+1)\varepsilon}
\| u_{0} \|^2
\\
& \quad
+
 \| f \|^2_{\infty} 
\sum_{j=1}^{n+1} \Big(  \frac{1}{1 + \varepsilon / \alpha} \Big)^{n-j+1} \frac{1}{\nu} \frac{1}{\alpha+\varepsilon}
 \tau 
\\
&
\leq
e^{ - \frac{2}{30} (n+1)\varepsilon}
\| u_{0} \|^2
\\
& \quad
+
 \| f \|^2_{\infty} 
\sum_{j=1}^{n+1} \Big(  \frac{1}{1 + \varepsilon / \alpha} \Big)^{n-j+1} \frac{1}{\nu} \frac{1}{1/2+\varepsilon}
 \tau 
\\
& 
=
e^{ - \frac{1}{15} (n+1)\varepsilon}
\| u_{0} \|^2
\\
& \quad
+
\frac{1}{\nu}\tau  \| f \|^2_{\infty} 
\Big(\sum_{j=1}^{n+1} \Big(  \frac{1}{1 + \varepsilon / \alpha} \Big)^{n-j+1}  \frac{1}{1/2+\varepsilon}
 \Big)
\\
& 
\leq
e^{ - \frac{1}{15} (n+1)\varepsilon}
\| u_{0} \|^2
\\
& \quad
+
\frac{1}{\nu}\tau  \| f \|^2_{\infty}  
\Big(\sum_{j=1}^{n+1} \Big(  \frac{1}{1 + 2/3\varepsilon } \Big)^{n-j+1}\Big) \frac{1}{1/2+2/3\varepsilon}
\\
& 
=
e^{ - \frac{1}{15} (n+1)\varepsilon}
\| u_{0} \|^2
\\
& \quad
+
\frac{1}{\nu}\tau  \| f \|^2_{\infty}  
\Big(\sum_{j=0}^n \Big(  \frac{1}{1 + 2/3\varepsilon } \Big)^{j} \Big) \frac{1}{1/2+2/3\varepsilon}
\\
& 
\text{Note that  } \sum_{i=0}^nx^i=\frac{1-x^{n+1}}{1-x} \text{ for } x\neq 1\\
& 
=
e^{ - \frac{1}{15} (n+1)\varepsilon}
\| u_{0} \|^2
\\
& \quad
+
\frac{1}{\nu}\tau  \| f \|^2_{\infty}  
\frac{1}{1/2+2/3\varepsilon}
\Big( 1- \Big(\frac{1}{1 + 2/3\varepsilon } \Big)^{n+1} \Big)
\Bigg(\frac{1}{1-\frac{1}{1+2/3\varepsilon}}\Bigg)
\\
& 
=
e^{ - \frac{1}{15} (n+1)\varepsilon}
\| u_{0} \|^2
\\
& \quad
+
\frac{1}{\nu}\tau  \| f \|^2_{\infty}  
\frac{1}{1/2+2/3\varepsilon}
\Big( 1- \Big(\frac{1}{1 + 2/3\varepsilon } \Big)^{n+1} \Big)
\Bigg(\frac{1+2/3\varepsilon}{2/3\varepsilon}\Bigg)
\\
& 
\leq
e^{ - \frac{1}{15} (n+1)\varepsilon}
\| u_{0} \|^2
\\
& \quad
+
\frac{1}{\nu}\tau  \| f \|^2_{\infty}  
\Big( 1- \Big(\frac{1}{1 + 2/3\varepsilon } \Big)^{n+1} \Big)
\Bigg(\frac{2}{2/3\varepsilon}\Bigg)
\\
& 
=
e^{ - \frac{1}{15} (n+1)\tau\nu\lambda_1 (2\theta-1)}
\| u_{0} \|^2
\\
& \quad
+
\frac{1}{\nu}\tau  \| f \|^2_{\infty}  
\Big( 1- \Big(\frac{1}{1 + 2/3\tau\nu\lambda_1 (2\theta-1) } \Big)^{n+1} \Big)
\Bigg(\frac{2}{2/3\tau\nu\lambda_1 (2\theta-1)}\Bigg)
\\
& 
=
e^{ - \frac{1}{15} (n+1)\tau\nu\lambda_1 (2\theta-1)}
\| u_{0} \|^2
\\
& \quad
+
3\frac{1}{\lambda_1\nu^2}\frac{1}{2\theta-1}  \| f \|^2_{\infty}  
\Big( 1- \Big(\frac{1}{1 + 2/3\tau\nu\lambda_1 (2\theta-1) } \Big)^{n+1} \Big)
\\
& 
\leq
e^{ - \frac{1}{15} (n+1)\tau\nu\lambda_1 (2\theta-1)}
\| u_{0} \|^2
\\
& \quad
+
3\frac{1}{\lambda_1\nu^2}\frac{1}{2\theta-1}  \| f \|^2_{\infty} 
\end{align*}
Hence,  \eqref{eq:energy2} is satisfied.\\
}
\end{confidential}
\noindent
which yields \eqref{eq:energy2} and \eqref{eq:energy2.1}.
\end{proof}

The following bound, which follows by direct computation from \eqref{eq:energy2}, proves the existence of an absorbing set in $H$.

\begin{corollary}\label{cor:absorbing}
Under the small time-step assumption \eqref{eq:step-condition}, provided the computational time interval $n\tau$ is sufficiently large
\begin{align*}
& 
 n\tau
\geq 
15 \frac{1}{\lambda_1}
\frac{1}{\nu} \frac{1}{( 2\theta - 1)}
\ln \bigg(
 \nu^2\lambda_1 ( 2 \theta - 1 ) 
\frac{ \| u_{0} \|^2 }{  \| f \|^2_{\infty} }
\bigg)
,
\end{align*}
we have that 
\begin{align}
\label{eq:energy6} 
\| u^{n} \|^2 
&
\leq 
4 \frac{1}{\lambda_1\nu^2} \frac{ 1 }{( 2 \theta - 1 ) }
  \| f \|^2_{\infty} 
.
\end{align}
\end{corollary}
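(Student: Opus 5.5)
The plan is to read the bound off directly from \eqref{eq:energy2} in Proposition \ref{prop:L2bound}. That estimate splits $\|u^n\|^2$ into a transient term $e^{-\frac{1}{15}\lambda_1\nu(2\theta-1)n\tau}\|u_0\|^2$ and a stationary term $3\frac{1}{\lambda_1\nu^2}\frac{1}{2\theta-1}\|f\|_\infty^2$. Since the target constant in \eqref{eq:energy6} is $4$ rather than $3$, it suffices to show that the transient term is at most one copy of $\frac{1}{\lambda_1\nu^2}\frac{1}{2\theta-1}\|f\|_\infty^2$ once $n\tau$ is large.

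Concretely, I will require
\begin{align*}
e^{-\frac{1}{15}\lambda_1\nu(2\theta-1)n\tau}\|u_0\|^2 \leq \frac{1}{\lambda_1\nu^2(2\theta-1)}\|f\|_\infty^2 .
\end{align*}
Taking logarithms, this is equivalent to
\begin{align*}
\frac{1}{15}\lambda_1\nu(2\theta-1)n\tau \geq \ln\Big(\nu^2\lambda_1(2\theta-1)\frac{\|u_0\|^2}{\|f\|_\infty^2}\Big),
\end{align*}
that is,
\[
n\tau\geq 15\frac{1}{\lambda_1}\frac{1}{\nu}\frac{1}{2\theta-1}\ln\Big(\nu^2\lambda_1(2\theta-1)\frac{\|u_0\|^2}{\|f\|_\infty^2}\Big),
\]
which is exactly the hypothesis. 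The division by $\lambda_1\nu(2\theta-1)>0$ is legitimate because $\theta\in(\frac12,1)$. Adding this to the stationary term of \eqref{eq:energy2} gives $3+1=4$ times $\frac{1}{\lambda_1\nu^2(2\theta-1)}\|f\|_\infty^2$, which is \eqref{eq:energy6}.

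The only point needing care is the degenerate case. If the argument of the logarithm is at most $1$ (including $u_0=0$), the right-hand side of the condition is nonpositive and every $n\geq0$ qualifies. In that case I check the inequality directly: the exponential factor is at most $1$, and $\|u_0\|^2\leq\frac{1}{\lambda_1\nu^2(2\theta-1)}\|f\|_\infty^2$ by assumption, so the bound still holds. I also implicitly assume $\|f\|_\infty>0$ so that the expression makes sense. There is no real obstacle here, since the statement is a direct computation from Proposition \ref{prop:L2bound}.
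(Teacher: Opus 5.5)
Your proposal is correct and follows the paper's own argument: the paper also reads the corollary off directly from \eqref{eq:energy2} by requiring the transient term $e^{-\frac{1}{15}\lambda_1\nu(2\theta-1)n\tau}\|u_0\|^2$ to be at most $\frac{1}{\lambda_1\nu^2(2\theta-1)}\|f\|_\infty^2$, taking logarithms, and treating $\|u_0\|=0$ separately. Your treatment of the case where the argument of the logarithm is at most $1$ is a harmless extension of that special case.
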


\begin{confidential}
{\color{red}  
\begin{proof}

From \eqref{eq:energy2} we have
\begin{align*} 
\| u^{n+1} \|^2 
&
\leq 
e^{ - \frac{1}{15} \lambda_1 \nu ( 2\theta - 1) 
(n+1)\tau }
\| u_{0} \|^2
\\
& \quad
+
3 \frac{1}{\lambda_1\nu^2} \frac{ 1 }{ ( 2 \theta - 1 ) }
  \| f \|^2_{\infty} 
\end{align*}
If $\|u_0\|=0$, the conclusion of the corollary is automatically true. Otherwise, it is true provided
\begin{align*} 
& 
e^{ - \frac{1}{15} \lambda_1 \nu ( 2\theta - 1) 
(n+1)\tau }
\| u_{0} \|^2
\leq 
 \frac{1}{\lambda_1\nu^2} \frac{ 1 }{ ( 2 \theta - 1 ) }
  \| f \|^2_{\infty} 
,
\end{align*}
or
\begin{align*} 
& 
e^{ - \frac{1}{15} \lambda_1 \nu ( 2\theta - 1) 
(n+1)\tau }
\leq 
\frac{1}{\| u_{0} \|^2}
 \frac{1}{\lambda_1\nu^2} \frac{ 1 }{( 2 \theta - 1 ) }
  \| f \|^2_{\infty} 
,
\\
& 
e^{ \frac{1}{15} \lambda_1 \nu ( 2\theta - 1) 
(n+1)\tau }
\geq 
 \nu^2\lambda_1 ( 2 \theta - 1 ) 
\frac{ \| u_{0} \|^2 }{  \| f \|^2_{\infty} }
,
\\
& 
\frac{1}{15} \lambda_1 \nu ( 2\theta - 1) (n+1)\tau 
\geq 
\ln \bigg(
 \nu^2\lambda_1 ( 2 \theta - 1 ) 
\frac{ \| u_{0} \|^2 }{  \| f \|^2_{\infty} }
\bigg)
,
\\
& 
 (n+1)\tau
\geq 
15 \frac{1}{\lambda_1}
\frac{1}{\nu ( 2\theta - 1)}
\ln \bigg(
 \nu^2\lambda_1 ( 2 \theta - 1 ) 
\frac{ \| u_{0} \|^2 }{  \| f \|^2_{\infty} }
\bigg)
,
\end{align*}
i.e., 
\begin{align*} 
\| u^{n+1} \|^2 
&
\leq 
4 \frac{1}{\lambda_1\nu^2} \frac{ 1 }{( 2 \theta - 1 ) }
  \| f \|^2_{\infty} 
,
\notag
\end{align*}
provided
\begin{align*}
& 
 (n+1)\tau
\geq 
15 \frac{1}{\lambda_1}
\frac{1}{\nu} \frac{1}{( 2\theta - 1)}
\ln \bigg(
 \nu^2\lambda_1 ( 2 \theta - 1 ) 
\frac{ \| u_{0} \|^2 }{  \| f \|^2_{\infty} }
\bigg)
.
\end{align*}

The bound \eqref{eq:energy6} follows directly from \eqref{eq:energy2}.
\end{proof}
}
\end{confidential}

The next two lemmas are needed in Section \ref{sec:H1bound} to prove the existence of an absorbing set in $V$. First, we derive an $L^2(H^1(\Omega))$ bound on the fractional time values $u^{n+\theta}$.

\begin{lemma}\label{lemma:gradutheta}
    Assume that the time-step restriction \eqref{eq:step-condition} holds. Then we have the following bounds
    \begin{align}
\label{eq:energy3}
& 
\nu\tau \sum_{i=0}^{n} \| \nabla u^{i+\theta}\|^2 
\leq
\|u_0\|^2  
+ 
\frac{1}{\nu}  (n+1)\tau  \frac{1}{\lambda_1}\| f \|_{\infty}^2
\qquad \forall n\geq 0
,
\end{align}
and for any integer $p\geq0$

\begin{align}
\label{eq:energy4}
& 
\nu\tau \sum_{i=n}^{n+p} \| \nabla u^{i+\theta}\|^2 
\leq
\|u^n\|^2  
+ 
\frac{1}{\nu}  (p+1)\tau  \frac{1}{\lambda_1}\| f \|_{\infty}^2 
\qquad \forall n\geq 0
.
\end{align}
\end{lemma}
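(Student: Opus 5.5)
The plan is to go back to the scheme in the form \eqref{eq:STAR} rather than to the BE sub-step. Lemma \ref{lemma:BEstab} only controls $\|u^{n+\theta}\|$ in terms of $\|u^n\|$, and it carries a factor $\theta$ in front of the viscous term. Neither of those telescopes over integer time levels with the coefficient $\nu\tau$ required in \eqref{eq:energy3}--\eqref{eq:energy4}.

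First, I test \eqref{eq:STAR} at step $i$ with $\tau u^{i+\theta}$ in $H$. Skew-symmetry \eqref{triformzero} removes the trilinear term $b(u^{i+\theta},u^{i+\theta},u^{i+\theta})$. The time-difference term is handled with the algebraic identity
\begin{align*}
(u^{i+1}-u^i,\ \theta u^{i+1}+(1-\theta)u^i)
= \tfrac12\big(\|u^{i+1}\|^2-\|u^i\|^2\big) + \big(\theta-\tfrac12\big)\|u^{i+1}-u^i\|^2 .
\end{align*}
This is the key step. The numerical dissipation term is nonnegative precisely because $\theta\geq \frac12$.

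Next, I bound the forcing term by Cauchy--Schwarz, \eqref{PF} and Young's inequality:
\begin{align*}
\tau(f^{i+\theta},u^{i+\theta}) \leq \frac{\tau}{2\nu\lambda_1}\|f\|_\infty^2 + \frac{\nu\tau}{2}\|\nabla u^{i+\theta}\|^2 .
\end{align*}
After absorbing the gradient term and multiplying by $2$, this gives
\begin{align*}
\|u^{i+1}\|^2-\|u^i\|^2 + (2\theta-1)\|u^{i+1}-u^i\|^2 + \nu\tau\|\nabla u^{i+\theta}\|^2 \leq \frac{\tau}{\nu\lambda_1}\|f\|_\infty^2 .
\end{align*}

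Finally, I sum this inequality over $i=n,\dots,n+p$. The first two terms telescope to $\|u^{n+p+1}\|^2-\|u^n\|^2$. I then drop the nonnegative quantities $\|u^{n+p+1}\|^2$ and $(2\theta-1)\sum\|u^{i+1}-u^i\|^2$, which yields \eqref{eq:energy4}. Taking $n=0$ and $p$ replaced by $n$ gives \eqref{eq:energy3}.

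I do not expect a serious obstacle; the only delicate point is choosing the test function $\tau u^{i+\theta}$ in the full scheme, so that both the telescoping and the coefficient $\nu\tau$ come out directly. The time-step restriction \eqref{eq:step-condition} does not appear to be needed in this argument.
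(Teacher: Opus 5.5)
Your proof is correct and is essentially the paper's argument. The paper tests the BE step \eqref{eq:BENSE} with $\theta\tau u^{n+\theta}$ and the forward-Euler form of the extrapolation step with $(1-\theta)\tau u^{n+\theta}$, then adds; this amounts to testing \eqref{eq:STAR} with $\tau u^{n+\theta}$, and it extracts the same dissipation $(2\theta-1)\|u^{n+1}-u^n\|^2$ to reach exactly your per-step inequality \eqref{eq:energy5} before summing. Your single identity for $(u^{i+1}-u^i,\theta u^{i+1}+(1-\theta)u^i)$ combines the paper's two polarization steps, and you are right that \eqref{eq:step-condition} is not used here.
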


\begin{proof}

\begin{confidential}
{\color{red}
    Recall what we are trying to solve, where the linear extrapolation is equivalent to a Forward Euler step:
\begin{align}\label{eq:BE}
    \frac{1}{\theta\tau}(u^{n+\theta}-u^n) 
    - \nu\Delta u^{n+\theta}
    + u^{n+\theta}\cdot\nabla u^{n+\theta} 
    + \nabla p^{n+\theta} &= f^{n+\theta}  \quad \text{in }\Omega \\
    \nabla\cdot u^{n+\theta} &= 0 \quad \text{in }\Omega\\
    u^{n+\theta}=0 \quad \text{on } \partial\Omega
\end{align}
\begin{equation}\label{eq:FE}
    \frac{1}{(1-\theta)\tau}(u^{n+1}-u^{n+\theta})-\nu\Delta u^{n+\theta}+u^{n+\theta}\cdot\nabla u^{n+\theta}+\nabla p^{n+\theta}=f^{n+\theta} \\    
\end{equation}
}
\end{confidential}
Testing \eqref{eq:BENSE} with $\theta\tau u^{n+\theta}$, and \eqref{eq:FENSE} with $(1-\theta)\tau u^{n+\theta}$ in $H$, using the polarization identity, and adding both equations yields

\begin{confidential}
    {\color{red}
\[\frac{1}{2}\left(\|u^{n+\theta}\|^2-\|u^{n}\|^2+\|u^{n+\theta}-u^{n}\|^2\right)
+\nu\theta\tau\|\nabla u^{n+\theta}\|^2
=\theta\tau( f^{n+\theta},u^{n+\theta})\]
and
\[\frac{1}{2}\left(\|u^{n+1}\|^2-\|u^{n+\theta}\|^2-\|u^{n+1}-u^{n+\theta}\|^2\right)
+\nu(1-\theta)\tau\|\nabla u^{n+\theta}\|^2
=(1-\theta)\tau( f^{n+\theta},u^{n+\theta}).\]
}
\end{confidential}

\[
\frac{1}{2}\Big(\|u^{n+1}\|^2-\|u^{n}\|^2\Big)
+\frac{1}{2}\Big(\|u^{n+\theta}-u^{n}\|^2-\|u^{n+1}-u^{n+\theta}\|^2\Big)
+\nu\tau\|\nabla u^{n+\theta}\|^2
=\tau( f^{n+\theta},u^{n+\theta}).
\]
The following is a key step to prove the long-time stability of the Cauchy one-leg $\theta$-method \eqref{eq:STAR}. We recall that, by \eqref{eq:FENSE}, 
$u^{n+\theta}=\theta u^{n+1}+(1-\theta)u^n$. Therefore,
\begin{confidential}
    {\color{red}
    \begin{align*}
    &\|u^{n+\theta}-u^{n}\|^2-\|u^{n+1}-u^{n+\theta}\|^2\\
    &= \|\theta u^{n+1}+(1-\theta)u^n-u^{n}\|^2-\|u^{n+1}-\theta u^{n+1}-(1-\theta)u^n\|^2\\
    &= \theta^2\|u^{n+1}-u^{n}\|^2-(1-\theta)^2\|u^{n+1}-u^{n}\|^2\\
    &= [\theta^2-(1-\theta)^2]\|u^{n+1}-u^{n}\|^2\\
    &= (2\theta-1)\|u^{n+1}-u^{n}\|^2
\end{align*}
    }
\end{confidential}
\begin{align*}
    \|u^{n+\theta}-u^{n}\|^2-\|u^{n+1}-u^{n+\theta}\|^2
    &= \|\theta u^{n+1}+(1-\theta)u^n-u^{n}\|^2-\|u^{n+1}-\theta u^{n+1}-(1-\theta)u^n\|^2\\
    &= \theta^2\|u^{n+1}-u^{n}\|^2-(1-\theta)^2\|u^{n+1}-u^{n}\|^2\\
    &= (2\theta-1)\|u^{n+1}-u^{n}\|^2,
\end{align*}
which, by using the H\"{o}lder, Young, and Poincar\'{e}-Friedrichs inequalities yields
\begin{confidential}
    {\color{red}
    \begin{align*}
& 
\frac{1}{2} \big( \|u^{n+1}\|^2 - \|u^n\|^2 \big) 
+ \frac{1}{2} (2\theta -1) \|u^{n+1} - u^n\|^2 
+ \nu \tau \| \nabla u^{n+\theta}\|^2 
= \tau ( f^{n+\theta} , u^{n+\theta} )
\\
& 
\leq
\tau \| f^{n+\theta}\|  \| u^{n+\theta}\|
\leq
\frac{1}{2}\nu \tau \|\nabla u^{n+\theta}\|^2
+ \frac{1}{2\nu} \tau \frac{1}{\lambda_1}\| f^{n+\theta}\|^2 
,
\end{align*}
hence
    }
\end{confidential}
\begin{align}
\label{eq:energy5}
\|u^{n+1}\|^2 - \|u^n\|^2 
+  (2\theta -1) \|u^{n+1} - u^n\|^2 
+ \nu \tau \| \nabla u^{n+\theta}\|^2 
&\leq
\frac{1}{\nu} \tau \frac{1}{\lambda_1}\| f^{n+\theta}\|^2 \leq \frac{1}{\nu} \tau  \frac{1}{\lambda_1}\| f\|_{\infty}^2.
\end{align}
Summation in $n$ implies \eqref{eq:energy3}, and similarly, summation for $i=n:n+p$, gives \eqref{eq:energy4}.
\begin{confidential}
{\color{red}
\begin{align*}
& 
\|u^{n+1}\|^2  
+ \sum_{i=0}^{n} (2\theta -1) \|u^{i+1} - u^i\|^2 
+ \nu \sum_{i=0}^{n} \tau \| \nabla u^{i+\theta}\|^2 
\\
& 
\leq
\|u_0\|^2  
+ 
\frac{1}{\nu} \sum_{i=0}^{n}  \tau  \frac{1}{\lambda_1}\| f \|_{\infty}^2 
\\
& 
=
\|u_0\|^2  
+ 
\frac{1}{\nu}(n+1)\tau  \frac{1}{\lambda_1}\| f \|_{\infty}^2  
\end{align*}
\begin{align*}
& 
\|u^{n+1}\|^2  
+ \sum_{i=0}^{n} (2\theta -1) \|u^{i+1} - u^i\|^2 
+ \nu \sum_{i=0}^{n} \tau \| \nabla u^{i+\theta}\|^2 
\leq
\|u_0\|^2  
+ 
\frac{1}{\nu}  (n+1)\tau  \frac{1}{\lambda_1}\| f \|_{\infty}^2,
\end{align*}
$ \forall n+1 \geq 0$, or
}
\end{confidential}
\begin{confidential}
{\color{red}
\begin{align*}
& 
\|u^{n+p+1}\|^2  
+ \sum_{i=n}^{n+p} (2\theta -1) \|u^{i+1} - u^i\|^2 
+ \nu \sum_{i=n}^{n+p} \tau \| \nabla u^{i+\theta}\|^2 
\\
& 
\leq
\|u^n\|^2  
+ 
\frac{1}{\nu} \tau (p+1) \frac{1}{\lambda_1}\| f \|_{\infty}^2 
\end{align*}

\begin{align*}
& 
\|u^{n+p+1}\|^2  
+ \sum_{i=n}^{n+p} (2\theta -1) \|u^{i+1} - u^i\|^2 
+ \nu \sum_{i=n}^{n+p} \tau \| \nabla u^{i+\theta}\|^2 
\leq
\|u^n\|^2  
+ 
\frac{1}{\nu}  (p+1)\tau  \frac{1}{\lambda_1}\| f \|_{\infty}^2,
\end{align*}
$\forall n+1 \geq 0$, 
or

\eqref{eq:energy4}.
}
\end{confidential}
\end{proof}

Secondly, we derive an $L^2(H^1(\Omega))$ bound on the integer time values $u^n$. We introduce the following notation 
\begin{align*}
    C_1 &:= \left(2K_1^2(2\theta^2-2\theta+1)\right)^{1/4},
    \\
    K_2  &:= \left(\frac{1}{2}+\left|\frac{C_1^4(1-\theta)^4\theta^3}{\nu^3(2\theta-1)^3}
    -\frac{\nu}{2\theta}\right|\frac{1}{\nu}\right)\|u_0\|^2  
    + \frac{\tau\nu}{8\theta}(4\theta^2-6\theta+3)\|\nabla u_0\|^2,
    \\
    K_3  &:= \left(\frac{4\theta }{\nu (2\theta-1)}+\left|\frac{C_1^4(1-\theta)^4\theta^3}{\nu^3(2\theta-1)^3}-\frac{\nu}{2\theta}\right|\frac{1}{\nu^2}\right)\frac{1}{\lambda_1}\|f\|_{\infty}^2,\\
    K_4 &:= \frac{1}{2}+ \left|\frac{C_1^4(1-\theta)^4\theta^3}{\nu^3(2\theta-1)^3}-\frac{\nu}{2\theta}\right|\frac{1}{\nu}.
\end{align*}

\begin{lemma}\label{lemma:gradu1}
Assume that the time-step restriction \eqref{eq:step-condition} holds. Then we have
    \begin{align}\label{eq:boundnablaunp1}
    &\frac{\nu(2\theta-1)}{16\theta}\tau\sum_{i=0}^{n}\|\nabla u^{i+1}\|^2
    \leq
     K_2  + (n+1)\tau K_3 \qquad \forall n\geq 0,
    \end{align}
    and for any integer $p\geq0$
    \begin{align}\label{eq:boundnablaunp1.1}
    \frac{\nu(2\theta-1)}{16\theta}\tau\sum_{i=n}^{n+p}\|\nabla u^{i+1}\|^2
    \leq
    4 K_4 K_1 + K_3(p+1)\tau + \frac{\tau\nu}{8\theta}(4\theta^2-6\theta+3)\|\nabla u^n\|^2 \qquad \forall n\geq 0.
\end{align}
\end{lemma}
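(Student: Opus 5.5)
The plan is to derive a one-step inequality that contains $\|\nabla u^{n+1}\|^2$ with a positive coefficient. The leftover gradient terms should be either telescoping in $\|\nabla u^{n}\|^2$ or multiples of $\|\nabla u^{n+\theta}\|^2$, which Lemma \ref{lemma:gradutheta} already controls in $L^2$.

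\textbf{Testing.} I would test \eqref{eq:STAR} with $\tau u^{n+1}$ in $H$.

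\emph{Time-difference term.} Polarization gives
\[
\tfrac12\big(\|u^{n+1}\|^2-\|u^n\|^2+\|u^{n+1}-u^n\|^2\big).
\]

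\emph{Viscous term.} Writing $u^{n+\theta}=\theta u^{n+1}+(1-\theta)u^n$, it equals
\[
\nu\tau\big(\theta\|\nabla u^{n+1}\|^2+(1-\theta)(\nabla u^n,\nabla u^{n+1})\big).
\]
I would split the cross term with a weighted Young inequality, chosen so that the positive coefficient on $\|\nabla u^{n+1}\|^2$ exceeds the negative one on $\|\nabla u^{n}\|^2$ by an amount proportional to $(2\theta-1)/\theta$. This is where $\theta>\frac12$ enters. After summation the imbalance leaves a positive multiple of $\tau\sum\|\nabla u^{i+1}\|^2$, plus a boundary term of the form $\frac{\tau\nu}{8\theta}(4\theta^2-6\theta+3)\|\nabla u_0\|^2$. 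Alternatively, one can express everything through $u^{n+\theta}$ and $u^{n+1}$: the coefficient $\nu/(2\theta)$ appearing inside the absolute values of $K_2,K_3,K_4$ suggests that some $\|\nabla u^{n+\theta}\|^2$ terms are moved to the right and absorbed by \eqref{eq:energy3}.

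\emph{Forcing term.} It is bounded by Young and \eqref{PF}, producing the $\frac{4\theta}{\nu(2\theta-1)}\frac{1}{\lambda_1}\|f\|_\infty^2$ part of $K_3$.

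\textbf{Nonlinear term (main obstacle).} Set $w=u^{n+\theta}$. Since $u^{n+1}=(w-(1-\theta)u^n)/\theta$ and $b(w,w,w)=0$ by \eqref{triformzero},
\[
b(w,w,u^{n+1})=-\tfrac{1-\theta}{\theta}\,b(w,w,u^n),
\]
and $u^n$ can in turn be rewritten through $u^{n+1}$ and $w$. Using H\"older together with the Ladyzhenskaya inequality \eqref{ladyzhenskaya},
\[
|b(w,w,v)|\le \|w\|_{L^4}\|\nabla w\|\|v\|_{L^4}\le 2^{-1/2}\|w\|^{1/2}\|\nabla w\|^{3/2}\|v\|^{1/2}\|\nabla v\|^{1/2}.
\]
A Young inequality with exponents $(4,\tfrac43)$ absorbs a small fraction of $\nu(2\theta-1)\tau\|\nabla u^{n+1}\|^2$ into the left. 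The remaining piece is of order
\[
\frac{(1-\theta)^4\theta^3}{\nu^3(2\theta-1)^3}\,\|w\|^2\|v\|^2\,\|\nabla u^{n+\theta}\|^2 .
\]
The $L^2$ factors are bounded uniformly using Proposition \ref{prop:L2bound}. Indeed $\|w\|^2\le \theta^2\|u^{n+1}\|^2+(1-\theta)^2\|u^n\|^2+2\theta(1-\theta)\|u^{n+1}\|\|u^n\|$, so the product of the squared norms is at most a multiple of $K_1^2$; this is exactly how $C_1^4=2K_1^2(2\theta^2-2\theta+1)$ should arise. I expect tuning the Young weights so that the leftover coefficient of $\|\nabla u^{n+1}\|^2$ is exactly $\frac{\nu(2\theta-1)}{16\theta}$ to be the most delicate step.

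\textbf{Summation.} I would sum the resulting inequality for $i=0,\dots,n$. The $\|u^{i}\|^2$ differences telescope, contributing $\frac12\|u_0\|^2$. The $\|\nabla u^i\|^2$ terms telescope up to the surplus kept on the left, leaving the $\|\nabla u_0\|^2$ term. The accumulated terms $\big|\frac{C_1^4(1-\theta)^4\theta^3}{\nu^3(2\theta-1)^3}-\frac{\nu}{2\theta}\big|\tau\sum\|\nabla u^{i+\theta}\|^2$ are bounded by \eqref{eq:energy3}. This yields \eqref{eq:boundnablaunp1} with $K_2,K_3$.

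For \eqref{eq:boundnablaunp1.1} I would repeat the summation over $i=n,\dots,n+p$. This time \eqref{eq:energy4} is used in place of \eqref{eq:energy3}, and $\|u^n\|^2\le K_1$ from \eqref{eq:energy2.1} replaces $\|u_0\|^2$. The generous constant $4K_4K_1$ is there to cover these $\|u^n\|^2$ contributions.
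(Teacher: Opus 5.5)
\textbf{Verdict: there is a gap in the nonlinear term.} Your skeleton matches the paper. You test with $\tau u^{n+1}$, treat the forcing with $\delta=\frac{\nu(2\theta-1)}{8\theta}$, telescope, and close with \eqref{eq:energy3}, \eqref{eq:energy4} and \eqref{eq:energy2.1}. For the viscous term you need the exact identity \eqref{eq:2.63}, not the weighted-Young split. That split discards $\frac{\nu\tau}{2\theta}\|\nabla u^{n+\theta}\|^2$, which is what supplies the $-\frac{\nu}{2\theta}$ inside the absolute values of $K_2,K_3,K_4$. Without it you cannot match the stated coefficient where that coefficient vanishes.

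The nonlinear term is exactly where the stated constants are fixed, and your sketch cannot produce them. Start from $|b(w,w,v)|\le 2^{-1/2}\|w\|^{1/2}\|v\|^{1/2}\|\nabla w\|^{3/2}\|\nabla v\|^{1/2}$ and apply a $(4,\frac43)$ Young inequality with weight $\gamma\sim\nu(2\theta-1)/\theta$ on $\|\nabla v\|^2$. This leaves $\frac34 M^{4/3}(4\gamma)^{-1/3}\|\nabla w\|^2$, where $M$ is the prefactor. That coefficient is of size $(\|w\|\|v\|)^{2/3}\nu^{-1/3}$, not $\frac{(1-\theta)^4\theta^3}{\nu^3(2\theta-1)^3}\|w\|^2\|v\|^2$ as you claim. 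So the $C_1^4/\nu^3$ appearing in $K_2,K_3,K_4$ cannot arise this way.

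Moreover, ``rewriting $u^n$ through $u^{n+1}$ and $w$'' in the third slot is circular. Since $b(w,w,u^n)=-\frac{\theta}{1-\theta}b(w,w,u^{n+1})$, it returns the original term and loses the factor $1-\theta$. If you then absorb into $\|\nabla u^{n+1}\|^2$ as you describe, the net coefficient of $\|\nabla u^{n+\theta}\|^2$ stays of order $K_1^{2/3}\nu^{-1/3}$ as $\theta\to1$. The stated coefficient instead tends to $\frac{\nu}{2}$, so for $K_1\gg\nu^2$ the lemma as stated does not follow.

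\textbf{What the paper does.} It expands the second slot and uses \eqref{triformzero}: $b(w,w,u^{n+1})=\theta\, b(w,u^{n+1},u^{n+1})+(1-\theta)\,b(w,u^n,u^{n+1})=(1-\theta)\,b(w,u^n,u^{n+1})$. This is bounded by $\frac{1-\theta}{\sqrt2}C_1\|\nabla u^n\|\|\nabla w\|^{1/2}\|\nabla u^{n+1}\|^{1/2}$. Two quadratic Young inequalities follow, with $\epsilon=\frac{\nu(2\theta-1)}{2\sqrt2\,\theta(1-\theta)C_1}$ and $x=\frac{\nu^2(2\theta-1)^2}{2\theta^2(1-\theta)^2C_1^2}$. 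They put exactly $\frac{\nu(2\theta-1)}{8\theta}$ on $\|\nabla u^n\|^2$ and $\frac{\nu(2\theta-1)}{4\theta}$ on $\|\nabla u^{n+1}\|^2$. This produces the coefficients $\frac{\nu}{8\theta}(4\theta^2-6\theta+3)$ and $\frac{\nu(2\theta-1)}{16\theta}+\frac{\nu}{8\theta}(4\theta^2-6\theta+3)$ of \eqref{eq:energy7}. Because those two pieces are pinned, the remainder on $\|\nabla u^{n+\theta}\|^2$ is forced to be $\frac{(1-\theta)C_1}{4\sqrt2\,\epsilon x}=\frac{C_1^4(1-\theta)^4\theta^3}{\nu^3(2\theta-1)^3}$. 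Combined with $\frac{\nu}{2\theta}$ from \eqref{eq:2.63}, this gives the absolute value.

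\textbf{How to repair your route.} It needs choices and a check that are missing from the proposal:
\begin{enumerate}
\item Keep $\frac{1-\theta}{\theta}b(w,w,u^n)$ instead of rewriting it back.
\item Put the Young weight $\gamma=\frac{\nu(2\theta-1)}{8\theta}$ on $\|\nabla u^n\|^2$.
\item Keep \eqref{eq:2.63} for the viscous term.
\item Verify that the resulting net coefficient $\bigl(\frac34 M^{4/3}(4\gamma)^{-1/3}-\frac{\nu}{2\theta}\bigr)_+$, with $M\le\frac{1-\theta}{\sqrt2\,\theta}C_1$, never exceeds $\bigl|\frac{C_1^4(1-\theta)^4\theta^3}{\nu^3(2\theta-1)^3}-\frac{\nu}{2\theta}\bigr|$.
\end{enumerate}
A short case analysis on whether that positive part vanishes shows the inequality in the last step does hold.
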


\begin{proof}

Multiplying \eqref{eq:BENSE} by $\theta\tau$ and \eqref{eq:FENSE} by $(1-\theta)\tau$ and adding yields
\begin{confidential}
{\color{red}
\begin{align} 
u^{n+\theta}-u^n-\theta\tau\nu\Delta u^{n+\theta}+\theta\tau u^{n+\theta}\cdot
\nabla u^
{n+\theta}+\theta\tau\nabla p^{n+\theta}=\theta \tau f^{n+\theta}
\end{align}
\begin{align}
u^{n+1}-u^{n+\theta}-(1-\theta)\tau\nu\Delta u^{n+\theta}+(1-\theta)\tau u^{n+\theta}\cdot
\nabla u^
{n+\theta}+(1-\theta)\tau \nabla p^{n+\theta}=(1-\theta)\tau f^{n+\theta}
\end{align}
}
\end{confidential}
\begin{align} 
u^{n+1}-u^n-\tau\nu\Delta u^{n+\theta}+\tau u^{n+\theta}\cdot
\nabla u^
{n+\theta}+\tau\nabla p^{n+\theta}= \tau f^{n+\theta}.
\end{align}
Testing with $u^{n+1}$ in $H$ and using the polarization identity we have
\begin{align}
\label{eq:2.52}
     \frac{1}{2}\|u^{n+1}\|^2-&\frac{1}{2}\|u^n\|^2 + \frac{1}{2} \|u^{n+1}-u^n\|^2 +\nu\tau(\nabla u^{n+\theta},\nabla u^{n+1})
    \\
    &
    \notag
    = -\tau b(u^{n+\theta},u^{n+\theta}, u^{n+1}) + \tau ( f^{n+\theta},u^{n+1}).
\end{align}

\begin{confidential}
{\color{red}
Note we have $\nabla\cdot u^{n+\theta}=0$, and $u^{n+\theta}=\theta u^{n+1}+(1-\theta)u^n$. So $\nabla\cdot u^{n+1}=\frac{\theta-1}{\theta}\nabla\cdot u^n$. So since $\nabla\cdot u_0 = 0$, we must have that $\nabla\cdot u^n=0$ for all $n$. The same argument holds to argue that $u^n=0$ on the boundary.

\begin{align}
    \|u^{n+1}\|^2
    -( u^n, u^{n+1})
    -\tau\nu(\Delta u^{n+\theta}, u^{n+1}) +\tau b(u^{n+\theta},u^{n+\theta}, u^{n+1}) + \tau ( \nabla p^{n+\theta}, u^{n+1}) = \tau ( f^{n+\theta}, u^{n+1} )
\end{align}

Let's look at each term of this expression:

\[
-( u^n, u^{n+1}) = -\frac{1}{2}\|u^n\|^2-\frac{1}{2}\|u^{n+1}\|^2+\frac{1}{2}\|u^{n+1}-u^n\|^2
\]

\[( \nabla p^{n+\theta},u^{n+1}) = ( p^{n+\theta}, \nabla\cdot u^{n+1}) = 0\]

}
\end{confidential}

We now explain how to handle each of the last three terms of the previous expression. 

The first key step in the proof deals with the kinematic dissipation term $\nu\tau(\nabla u^{n+\theta},\nabla u^{n+1})$, which we rewrite
\begin{align*}
    (1-\theta)( \nabla u^{n},\nabla u^{n+1}) = \frac{1}{2\theta} \left(-\theta^2\|\nabla u^{n+1}\|^2-(1-\theta)^2\|\nabla u^{n}\|^2+\|\theta\nabla u^{n+1}+(1-\theta)\nabla u^n\|^2\right),
\end{align*}
\begin{confidential}
{\color{red}
Expanding the RHS:

\begin{align}
    \frac{1}{2\theta} &\left(-\theta^2\|\nabla u^{n+1}\|^2-(1-\theta)^2\|\nabla u^{n}\|^2+\|\theta\nabla u^{n+1}+(1-\theta)\nabla u^n\|^2\right)
    \\
    &
    =
    \frac{1}{2\theta}\left[-\theta^2\|\nabla u^{n+1}\|^2-(1-\theta)^2\|\nabla u^{n}\|^2+\theta^2\|\nabla u^{n+1}\|^2+(1-\theta)^2\|\nabla u^{n}\|^2+2\theta(1-\theta)( \nabla u^n,\nabla u^{n+1})\right]
    \\
    &
    =
    \frac{1}{2\theta}2\theta(1-\theta)( \nabla u^n,\nabla u^{n+1})
    \\
    &
    =(1-\theta)( \nabla u^n,\nabla u^{n+1})
\end{align}
}
\end{confidential}
and therefore,
\begin{align}
     ( \nabla u^{n+\theta}, \nabla u^{n+1}) &
    = ( \nabla (\theta u^{n+1}+(1-\theta)u^n), \nabla u^{n+1})\notag
    \\
    &
    = \theta \|\nabla u^{n+1}\|^2 + (1-\theta) ( \nabla u^n, \nabla u^{n+1})\notag
\\
&
    =
    \theta \|\nabla u^{n+1}\|^2 + \frac{1}{2\theta} \left(-\theta^2\|\nabla u^{n+1}\|^2-(1-\theta)^2\|\nabla u^{n}\|^2+\|\theta\nabla u^{n+1}+(1-\theta)\nabla u^n\|^2\right)\notag
    \\
    &
    \label{eq:2.63}
    =\frac{\theta}{2}\|\nabla u^{n+1}\|^2-\frac{(1-\theta)^2}{2\theta}\|\nabla u^n\|^2+\frac{1}{2\theta}\|\nabla u^{n+\theta}\|^2.
\end{align}
\begin{confidential}
    {\color{red}
    So
\begin{align}\label{eq:boundbandf}
    \frac{1}{2}\|u^{n+1}\|^2-&\frac{1}{2}\|u^n\|^2 + \frac{1}{2} \|u^{n+1}-u^n\|^2 +\frac{\tau\nu\theta}{2}\|\nabla u^{n+1}\|^2-\frac{\tau\nu(1-\theta)^2}{2\theta}\|\nabla u^n\|^2 + \frac{\nu\tau}{2\theta}\|\nabla u^{n+\theta}\|^2
    \\
    &
    = -\tau b(u^{n+\theta},u^{n+\theta}, u^{n+1}) + \tau ( f^{n+\theta},u^{n+1})
\end{align}
    }
\end{confidential}
Next, we bound the body force term using the H\"{o}lder, Young, and Poincar\'{e}-Friedrichs inequalities. This yields
\begin{align}\label{eq:2.66}
    \tau( f^{n+\theta}, u^{n+1}) \leq \tau \|f^{n+\theta}\|\| u^{n+1}\| \leq \frac{\tau}{2\delta}\frac{1}{\lambda_1}\|f^{n+\theta}\|^2+\tau\frac{\delta}{2}\|\nabla u^{n+1}\|^2
\end{align}
for any $\delta>0$, to be chosen later.

The third key step deals with the non-linear convective term. Using the skew-symmetry property \eqref{triformzero}, we have 
\begin{align*}
    -\tau b(u^{n+\theta},u^{n+\theta}, u^{n+1}) 
    &= -\tau b(u^{n+\theta},\theta u^{n+1}+(1-\theta)u^n, u^{n+1}) 
    \\
    &
    = -\tau \theta b(u^{n+\theta}, u^{n+1}, u^{n+1}) -\tau (1-\theta)b(u^{n+\theta},u^n, u^{n+1}) 
    \\
    &
    = -\tau (1-\theta)b(u^{n+\theta},u^n, u^{n+1}). 
\end{align*}
Using H\"{o}lder's inequality with $p=2$, $q=4$, $r=4$ and the Ladyzhenskaya inequality \eqref{ladyzhenskaya} we obtain
\begin{align*}
    -\tau (1-\theta)b(u^{n+\theta},u^n, u^{n+1}) &
    \leq (1-\theta)\tau 2^{-1/2} \|\nabla u^n\|\|u^{n+\theta}\|^{1/2}\|\nabla u^{n+\theta}\|^{1/2}\|u^{n+1}\|^{1/2}\|\nabla u^{n+1}\|^{1/2}.
\end{align*}
\begin{confidential}
{\color{red}
Thus, for all $\theta\in\left(\frac{1}{2},1\right)$,

\begin{align}\label{eq:trick1}
    \frac{1}{2}\|u^{n+1}\|^2-&\frac{1}{2}\|u^n\|^2 + \frac{1}{2} \|u^{n+1}-u^n\|^2 +\frac{\tau\nu\theta}{2}\|\nabla u^{n+1}\|^2-\frac{\tau\nu(1-\theta)^2}{2\theta}\|\nabla u^n\|^2 + \frac{\nu\tau}{2\theta}\|\nabla u^{n+\theta}\|^2
    \\
    &
    = -\tau b(u^{n+\theta},u^{n+\theta}, u^{n+1}) + \tau ( f^{n+\theta},u^{n+1})
    \\
    &
    = -\tau b(u^{n+\theta},\theta u^{n+1}+(1-\theta)u^n, u^{n+1}) + \tau ( f^{n+\theta},u^{n+1})
    \\
    &
    = -\tau \theta b(u^{n+\theta}, u^{n+1}, u^{n+1}) -\tau (1-\theta)b(u^{n+\theta},u^n, u^{n+1}) + \tau ( f^{n+\theta},u^{n+1})
    \\
    &
    = -\tau (1-\theta)b(u^{n+\theta},u^n, u^{n+1}) + \tau ( f^{n+\theta},u^{n+1})
    \\
    &
    \leq 
    -\tau (1-\theta)b(u^{n+\theta},u^n, u^{n+1}) + \frac{\tau}{2\delta} \frac{1}{\lambda_1}\|f^{n+\theta}\|^2+\tau\frac{\delta}{2}\|\nabla u^{n+1}\|^2
\end{align}

And we can deal with the trilinear term using H\"{o}lder's inequality with $p=2$, $q=4$, $r=4$ and Ladyzhenskaya's inequality in 2D ($\|u\|_{L^4(\Omega)}\leq 2^{-1/4}\|u\|^{1/2}\|\nabla u\|^{1/2}$):
\begin{align}
    -\tau (1-\theta)b(u^{n+\theta},u^n, u^{n+1}) &
    = -(1-\theta)\tau\int_{\Omega}(u^{n+\theta}\cdot \nabla)u^nu^{n+1}
    \\
    &
    \leq(1-\theta) \tau\int_{\Omega} |u^{n+\theta}||\nabla u^n||u^{n+1}|
    \\
    &
    \leq (1-\theta)\tau \|\nabla u^n\|\|u^{n+\theta}\|_{L^4(\Omega)}\|u^{n+1}\|_{L^4(\Omega)}
    \\
    &
    \leq (1-\theta)\tau \|\nabla u^n\|2^{-1/4}\|u^{n+\theta}\|^{1/2}\|\nabla u^{n+\theta}\|^{1/2}2^{-1/4}\|u^{n+1}\|^{1/2}\|\nabla u^{n+1}\|^{1/2}
    \\
    &
    = (1-\theta)\tau 2^{-1/2} \|\nabla u^n\|\|u^{n+\theta}\|^{1/2}\|\nabla u^{n+\theta}\|^{1/2}\|u^{n+1}\|^{1/2}\|\nabla u^{n+1}\|^{1/2}
\end{align}
Now recall that $\|u^{n}\|^2\leq K_1$ for all $n$. Then
\begin{align}
    \|u^{n+\theta}\|^2 & 
    = \|\theta u^{n+1}+(1-\theta)u^n\|^2
    \\
    &
    \leq
    2\theta^2\|u^{n+1}\|^2+2(1-\theta)^2\|u^n\|^2
    \\
    &
    \leq 2(\theta^2+(1-\theta)^2)K_1
    \\
    &
    = 2(2\theta^2-2\theta+1)K_1
\end{align}
}
\end{confidential}
Now due to \eqref{eq:energy2.1}, we have
\begin{align*}
    \|u^{n+\theta}\|^2  
    = \|\theta u^{n+1}+(1-\theta)u^n\|^2
    \leq
    2\theta^2\|u^{n+1}\|^2+2(1-\theta)^2\|u^n\|^2
    \leq 2(2\theta^2-2\theta+1)K_1,
\end{align*}
hence
\begin{align*}
    \|u^{n+1}\|^{1/2}\|u^{n+\theta}\|^{1/2} \leq C_1.
\end{align*}
Therefore, applying Young's inequality twice, we get that for arbitrary $\epsilon,x>0$ (to be made precise later),
\begin{confidential}
    {\color{red}
    \begin{align}
    -\tau (1-\theta)b(u^{n+\theta},u^n, u^{n+1}) &
    \leq (1-\theta)\tau 2^{-1/2} \|\nabla u^n\|\|u^{n+\theta}\|^{1/2}\|\nabla u^{n+\theta}\|^{1/2}\|u^{n+1}\|^{1/2}\|\nabla u^{n+1}\|^{1/2}
    \\
    &
    \leq (1-\theta)\tau 2^{-1/2}C_1\|\nabla u^n\|\|\nabla u^{n+\theta}\|^{1/2}\|\nabla u^{n+1}\|^{1/2}
    \\
    &
    \leq (1-\theta)\tau 2^{-1/2}C_1\left(\frac{\epsilon}{2}\|\nabla u^n\|^2 +\frac{1}{2\epsilon}\|\nabla u^{n+\theta}\|\|\nabla u^{n+1}\|\right)
    \\
    &
    \leq (1-\theta)\tau 2^{-1/2}C_1\left(\frac{\epsilon}{2}\|\nabla u^n\|^2 + \frac{1}{2\epsilon}\left(\frac{x}{2}\|\nabla u^{n+1}\|^2+\frac{1}{2x}\|\nabla u^{n+\theta}\|^2\right)\right)
    \\
    &
    =
    \frac{(1-\theta)\tau}{\sqrt{2}}C_1\frac{\epsilon}{2}\|\nabla u^n\|^2 
    +\frac{(1-\theta)\tau}{\sqrt{2}}C_1\frac{x}{4\epsilon}\|\nabla u^{n+1}\|^2
    +
    \frac{(1-\theta)\tau}{\sqrt{2}}C_1\frac{1}{4x\epsilon}\|\nabla u^{n+\theta}\|^2
\end{align}
    }
\end{confidential}
\begin{align}
    -\tau& (1-\theta)b(u^{n+\theta},u^n, u^{n+1}) 
    \leq (1-\theta)\tau 2^{-1/2}C_1\|\nabla u^n\|\|\nabla u^{n+\theta}\|^{1/2}\|\nabla u^{n+1}\|^{1/2}\notag
    \\
    &
    \label{eq:2.93}
    \leq 
    \frac{(1-\theta)\tau}{\sqrt{2}}C_1\frac{\epsilon}{2}\|\nabla u^n\|^2 
    +\frac{(1-\theta)\tau}{\sqrt{2}}C_1\frac{x}{4\epsilon}\|\nabla u^{n+1}\|^2
    +
    \frac{(1-\theta)\tau}{\sqrt{2}}C_1\frac{1}{4x\epsilon}\|\nabla u^{n+\theta}\|^2.
\end{align}
Plugging \eqref{eq:2.63}, \eqref{eq:2.66}, and \eqref{eq:2.93} back into \eqref{eq:2.52} we obtain
\begin{align}\label{eq: expr1}
    &\frac{1}{2}\|u^{n+1}\|^2-\frac{1}{2}\|u^n\|^2 + \frac{1}{2} \|u^{n+1}-u^n\|^2 
    \\
    &
    \qquad+\left(\frac{\tau\nu\theta}{2}-\frac{(1-\theta)\tau}{\sqrt{2}}C_1\frac{x}{4\epsilon}-\frac{\delta\tau}{2}\right)\|\nabla u^{n+1}\|^2
    -\left(\frac{\tau\nu(1-\theta)^2}{2\theta}+\frac{(1-\theta)\tau}{\sqrt{2}}C_1\frac{\epsilon}{2}\right)\|\nabla u^n\|^2 \notag
    \\
    &
    \qquad+ \left(\frac{\nu\tau}{2\theta}-\frac{(1-\theta)\tau}{\sqrt{2}}C_1\frac{1}{4\epsilon x}\right)\|\nabla u^{n+\theta}\|^2 \notag
    \\
    &
    \leq \notag
    \frac{\tau}{2\delta}\frac{1}{\lambda_1}\|f^{n+\theta}\|^2.
\end{align}
\begin{confidential}
    {\color{red}
    In order to have a telescoping sum, but still have a term containing $\|\nabla u^{n+1}\|$ in the left-hand side, I want to pick $\varepsilon,C,\delta,x>0$ such that 
\begin{align}\label{eq:2.98}
   &\left(\frac{\tau\nu\theta}{2}-\frac{(1-\theta)\tau}{\sqrt{2}}C_1\frac{x}{4\epsilon}-\frac{\delta\tau}{2}\right)\|\nabla u^{n+1}\|^2
    -\left(\frac{\tau\nu(1-\theta)^2}{2\theta}+\frac{(1-\theta)\tau}{\sqrt{2}}C_1\frac{\epsilon}{2}\right)\|\nabla u^n\|^2 
    \\
    &
    = 
    \label{eq:2.99}
    \tau C\|\nabla u^{n+1}\|^2+ \left(\frac{\tau\nu\theta}{2}-\frac{(1-\theta)\tau}{\sqrt{2}}C_1\frac{x}{4\epsilon}+\frac{\delta\tau}{2}-\tau C\right)\|\nabla u^{n+1}\|^2
    -\left(\frac{\tau\nu(1-\theta)^2}{2\theta}+\frac{(1-\theta)\tau}{\sqrt{2}}C_1\frac{\epsilon}{2}\right)\|\nabla u^n\|^2 
\end{align}
 and
 \begin{equation}\label{eq:cond2verify}
0\leq \frac{\tau\nu\theta}{2}-\frac{(1-\theta)\tau}{\sqrt{2}}C_1\frac{x}{4\epsilon}-\frac{\delta\tau}{2}-\tau C = \frac{\tau\nu(1-\theta)^2}{2\theta}+\frac{(1-\theta)\tau}{\sqrt{2}}C_1\frac{\epsilon}{2}
 \end{equation}

By direct computation, we can prove that the choice
    }
\end{confidential}
With the values $\varepsilon = \frac{\nu(2\theta-1)}{2\theta(1-\theta)C_1\sqrt{2}}$, $x = \frac{\nu^2(2\theta-1)^2}{2\theta^2(1-\theta)^2C_1^2}$, and $\delta = \frac{\nu(2\theta-1)}{8\theta}$, we see that \eqref{eq: expr1} writes as 
\begin{confidential}
    {\color{red}
    \begin{align}
    \epsilon &= \frac{\nu(2\theta-1)}{2\theta(1-\theta)C_1\sqrt{2}}
    \\
    x &= \frac{\nu^2(2\theta-1)^2}{2\theta^2(1-\theta)^2C_1^2}
    \\
\delta &= \frac{\nu(2\theta-1)}{8\theta}
\\
C &=\frac{\nu(2\theta-1)}{16\theta}
\end{align}
satisfies the statement above.
    }
\end{confidential}
 \begin{confidential}
 {\color{red}
 So I want to have 
 \[
 \frac{\nu\theta}{2}-\frac{\nu(1-\theta)^2}{2\theta}
 = 
 \frac{(1-\theta)}{\sqrt{2}}C_1\frac{\epsilon}{2}
 +\frac{(1-\theta)}{\sqrt{2}}C_1\frac{x}{4\epsilon}
 +\frac{\delta}{2} +C
 \]

 Let $\delta=\frac{(1-\theta)}{\sqrt{2}}C_1\frac{\rho}{2}$, $C=\frac{(1-\theta)}{\sqrt{2}}C_1\frac{D}{4}$. We then want to get 

 \begin{align}
     &\frac{\nu\theta}{2}-\frac{\nu(1-\theta)^2}{2\theta}
 = 
 \frac{(1-\theta)}{\sqrt{2}}C_1\left(\frac{\epsilon}{2}+\frac{\rho}{4}+\frac{D}{4}+\frac{x}{4\epsilon}\right)
 \\
 &
 \iff\\
 &
 \frac{\nu(2\theta-1)}{2\theta}=\frac{(1-\theta)}{\sqrt{2}}C_1\left(\frac{\epsilon}{2}+\frac{\rho}{4}+\frac{D}{4}+\frac{x}{4\epsilon}\right)
 \end{align}

Letting $\rho=D=\epsilon$,
\begin{align}
 &
 \frac{\nu(2\theta-1)}{2\theta}=\frac{(1-\theta)}{\sqrt{2}}C_1\left(\epsilon+\frac{x}{4\epsilon}\right)
 \\
 &
 \iff
 \\
 &
 \frac{\nu(2\theta-1)}{4\theta}+\frac{\nu(2\theta-1)}{4\theta}
 =\frac{(1-\theta)}{\sqrt{2}}C_1\epsilon+\frac{(1-\theta)}{\sqrt{2}}C_1\frac{x}{4\epsilon}
 \end{align}

 Pick $\epsilon$ such that $\frac{\nu(2\theta-1)}{4\theta}=\frac{(1-\theta)}{\sqrt{2}}C_1\epsilon$. So, for $\theta\in(1/2,1)$,
 \[
 \epsilon = \frac{\nu(2\theta-1)}{2\theta(1-\theta)C_1\sqrt{2}}
 \]
I want to note here that this is the exact moment where this doesn't work for Backward Euler $\theta=1$.

Now I pick $x$ such that $\frac{\nu(2\theta-1)}{4\theta}=\frac{(1-\theta)}{\sqrt{2}}C_1\frac{x}{4\epsilon}$ So, for $\theta\in(1/2,1)$,

\[
x = \frac{\nu^2(2\theta-1)^2}{2\theta^2(1-\theta)^2C_1^2}
\]

And thus, we get that

\[
\delta = \frac{(1-\theta)}{\sqrt{2}}C_1\frac{\epsilon}{2}= \frac{\nu(2\theta-1)}{8\theta}
\]

\[
C = \frac{(1-\theta)}{\sqrt{2}}C_1\frac{\epsilon}{4}= \frac{\nu(2\theta-1)}{16\theta}
\]

Now let's verify that \eqref{eq:cond2verify} holds. Plugging in the values of $x,\epsilon,\delta,C$, we get that
\[
\frac{\nu\theta}{2}-\frac{(1-\theta)}{\sqrt{2}}C_1\frac{x}{4\epsilon}-\frac{\delta}{2}- C = \frac{\theta\nu}{2}-\frac{\nu(2\theta-1)}{4\theta}-\frac{\nu(2\theta-1)}{8\theta} = \frac{\nu}{8\theta}(4\theta^2-6\theta+3)>0
\]
And
\[
\frac{\nu(1-\theta)^2}{2\theta}+\frac{(1-\theta)}{\sqrt{2}}C_1\frac{\epsilon}{2} = \frac{\nu}{8\theta}(4\theta^2-6\theta+3)
\]
So our conditions are satisfied. 
We can now compute the quantities in \eqref{eq:2.98}-\eqref{eq:2.99}:

\begin{align}
\frac{\nu\theta}{2}-\frac{(1-\theta)}{\sqrt{2}}C_1\frac{x}{4\epsilon}-\frac{\delta}{2}- C &=\frac{\nu(1-\theta)^2}{2\theta}+\frac{(1-\theta)}{\sqrt{2}}C_1\frac{\epsilon}{2} 
\\
&= \frac{\nu}{8\theta}(4\theta^2-6\theta+3)>0
\end{align}

\[
\frac{\nu}{2\theta}-\frac{(1-\theta)}{\sqrt{2}}C_1\frac{1}{4\epsilon x}=\frac{\nu}{2\theta}-\frac{C_1^4(1-\theta)^4\theta^3}{\nu^3(2\theta-1)^3}
\]
}
\end{confidential}
\begin{align}\label{eq:energy7}
    &\frac{1}{2}\|u^{n+1}\|^2-\frac{1}{2}\|u^n\|^2 + \frac{1}{2} \|u^{n+1}-u^n\|^2 
    \\
    &
    \notag
    \qquad +\frac{\tau\nu(2\theta-1)}{16\theta}\|\nabla u^{n+1}\|^2
    +\frac{\tau\nu}{8\theta}(4\theta^2-6\theta+3)\|\nabla u^{n+1}\|^2
    -\frac{\tau\nu}{8\theta}(4\theta^2-6\theta+3)\|\nabla u^n\|^2 
    \\
    &
    \leq \notag
    \frac{\tau 4\theta}{\nu (2\theta-1)} \frac{1}{\lambda_1}\|f^{n+\theta}\|^2
    + \tau\left(\frac{C_1^4(1-\theta)^4\theta^3}{\nu^3(2\theta-1)^3}-\frac{\nu}{2\theta}\right)\|\nabla u^{n+\theta}\|^2.
\end{align}
\begin{confidential}
    {\color{red}
    Adding from $i=0$ to $n$, and using the bound \eqref{eq:energy3},
\begin{align}
    \frac{1}{2}\|u^{n+1}\|^2&+ \frac{1}{2} \sum_{i=0}^{n}\|u^{i+1}-u^i\|^2 +\frac{\tau\nu}{8\theta}(4\theta^2-6\theta+3)\|\nabla u^{n+1}\|^2
    \\
    &
    +\frac{\tau\nu(2\theta-1)}{16\theta}\sum_{i=0}^{n}\|\nabla u^{i+1}\|^2
    \\
    &
    \leq
    \frac{1}{2}\|u_0\|^2  + \frac{\tau\nu}{8\theta}(4\theta^2-6\theta+3)\|\nabla u_0\|^2 + \frac{\tau 4\theta}{\nu (2\theta-1)}\sum_{i=0}^{n} \frac{1}{\lambda_1}\|f^{n+\theta}\|^2
    \\
    &
    + \left|\frac{C_1^4(1-\theta)^4\theta^3}{\nu^3(2\theta-1)^3}-\frac{\nu}{2\theta}\right|\frac{1}{\nu}\|u_0\|^2+\left|\frac{C_1^4(1-\theta)^4\theta^3}{\nu^3(2\theta-1)^3}-\frac{\nu}{2\theta}\right|\frac{1}{\nu^2} (n+1)\tau  \frac{1}{\lambda_1}\|f\|_{\infty}^2
    \\
    &
    \leq
    \frac{1}{2}\|u_0\|^2  
    + \frac{\tau\nu}{8\theta}(4\theta^2-6\theta+3)\|\nabla u_0\|^2 
    + \frac{4\theta }{\nu (2\theta-1)}(n+1)\tau \frac{1}{\lambda_1}\|f\|_{\infty}^2
    \\
    &
    + \left|\frac{C_1^4(1-\theta)^4\theta^3}{\nu^3(2\theta-1)^3}
    -\frac{\nu}{2\theta}\right|\frac{1}{\nu}\|u_0\|^2
    +\left|\frac{C_1^4(1-\theta)^4\theta^3}{\nu^3(2\theta-1)^3}-\frac{\nu}{2\theta}\right|\frac{1}{\nu^2} (n+1)\tau  \frac{1}{\lambda_1}\|f\|_{\infty}^2
    \\
    &
    =
    \left(\frac{1}{2}+\left|\frac{C_1^4(1-\theta)^4\theta^3}{\nu^3(2\theta-1)^3}
    -\frac{\nu}{2\theta}\right|\frac{1}{\nu}\right)\|u_0\|^2  
    \\
    &
    + \frac{\tau\nu}{8\theta}(4\theta^2-6\theta+3)\|\nabla u_0\|^2 
    \\
    &
    + \left(\frac{4\theta }{\nu (2\theta-1)}+\left|\frac{C_1^4(1-\theta)^4\theta^3}{\nu^3(2\theta-1)^3}-\frac{\nu}{2\theta}\right|\frac{}{\nu^2}\right)(n+1)\tau \frac{1}{\lambda_1}\|f\|_{\infty}^2
    \\
    &
    := K_2  + (n+1)\tau K_3 
\end{align}
    }
\end{confidential}
Adding from $i=0$ to $n$, and using the $L^2(H^1(\Omega))$ bound \eqref{eq:energy3} on the fractional time values yields
\begin{align*}
    &\frac{1}{2}\|u^{n+1}\|^2+ \frac{1}{2} \sum_{i=0}^{n}\|u^{i+1}-u^i\|^2 +\frac{\tau\nu}{8\theta}(4\theta^2-6\theta+3)\|\nabla u^{n+1}\|^2
    +\frac{\tau\nu(2\theta-1)}{16\theta}\sum_{i=0}^{n}\|\nabla u^{i+1}\|^2
    \\
    &
    \leq
    \left(\frac{1}{2}+\left|\frac{C_1^4(1-\theta)^4\theta^3}{\nu^3(2\theta-1)^3}
    -\frac{\nu}{2\theta}\right|\frac{1}{\nu}\right)\|u_0\|^2  
    + \frac{\tau\nu}{8\theta}(4\theta^2-6\theta+3)\|\nabla u_0\|^2 
    \\
    &
    \qquad + \left(\frac{4\theta }{\nu (2\theta-1)}+\left|\frac{C_1^4(1-\theta)^4\theta^3}{\nu^3(2\theta-1)^3}-\frac{\nu}{2\theta}\right|\frac{}{\nu^2}\right)(n+1)\tau \frac{1}{\lambda_1}\|f\|_{\infty}^2,
\end{align*}
which implies \eqref{eq:boundnablaunp1}. Finally, from \eqref{eq:energy7}, adding from $i=n$ to $n+p$, we obtain
\begin{align*}
    &\frac{1}{2}\|u^{n+p+1}\|^2+ \frac{1}{2} \sum_{i=n}^{n+p}\|u^{i+1}-u^i\|^2 +\frac{\tau\nu}{8\theta}(4\theta^2-6\theta+3)\|\nabla u^{n+p+1}\|^2
    +\frac{\tau\nu(2\theta-1)}{16\theta}\sum_{i=n}^{n+p}\|\nabla u^{i+1}\|^2
    \\
    &
    \leq
    \frac{1}{2}\|u^n\|^2  + \frac{\tau\nu}{8\theta}(4\theta^2-6\theta+3)\|\nabla u^n\|^2 + \frac{\tau 4\theta}{\nu (2\theta-1)}(p+1)\frac{1}{\lambda_1}\|f\|_{\infty}^2
    \\
    &
    \qquad + \left|\frac{C_1^4(1-\theta)^4\theta^3}{\nu^3(2\theta-1)^3}-\frac{\nu}{2\theta}\right|\frac{1}{\nu}\|u^n\|^2+\left|\frac{C_1^4(1-\theta)^4\theta^3}{\nu^3(2\theta-1)^3}-\frac{\nu}{2\theta}\right|\frac{1}{\nu^2} (p+1)\tau  \frac{1}{\lambda_1}\|f\|_{\infty}^2
    \\
    &
    \leq
    4 K_4 K_1 + K_3(p+1)\tau + \frac{\tau\nu}{8\theta}(4\theta^2-6\theta+3)\|\nabla u^n\|^2,
\end{align*}
which, in particular, yields \eqref{eq:boundnablaunp1.1}.
\end{proof}

% Section 3 - H1 - Bound

\section{V-Stability}\label{sec:H1bound}
This section establishes our main result, namely that the Cauchy one-leg $\theta$-method \eqref{eq:STAR} is $V$-stable. Our goal is to prove bounds analogous to the continuous-in-time bounds \eqref{ctsH1init} and \eqref{ctsH1}. We follow the approach of \cite{MR2217369}, and briefly outline the key steps here.

In Lemma \ref{lemmaind}, we show that, provided a suitable time-step restriction holds at step $n$, then there is a bound on $\|\nabla u^{n+1}\|$ in terms of $\|\nabla u^{n}\|$. In Lemma \ref{lemma:DGL}, we recall the Discrete Gr\"{o}nwall Lemma. In Proposition \ref{prop2}, we establish finite-time stability using Lemma \ref{lemmaind} and Lemma \ref{lemma:DGL}. In Lemma \ref{lemma:DUGL}, we recall the discrete Uniform Gr\"{o}nwall Lemma. Finally, in Theorem \ref{th:theorem}, we prove the long-time $V$-stability.
\begin{confidential}
    {\color{red}
    We first prove a proposition that states that, if a certain time-step restriction is met for some $n$, then there is a bound $\|u^{n+1}\|$ depending on the norm at previous time steps. We then use this result and the Discrete Gr\"{o}nwall Lemma to prove inductively that ther is a finite-time bound. Finally, we use the Discrete Uniform Gr\"{o}nwall Lemma to prove long-time stability. 
    }
\end{confidential}

To set the stage for the first lemma, we argue as in the proof of \eqref{eq:energy5} in Lemma \ref{lemma:gradutheta}. We test \eqref{eq:BENSE} with $-\theta\tau\Delta u^{n+\theta}$ and \eqref{eq:FENSE} with $-(1-\theta)\tau \Delta u^{n+\theta}$ in $V$, use the polarization identity, the H\"{o}lder and Young inequalities, and add to obtain
\begin{align*}
& 
\frac{1}{2} \big( \|\nabla u^{n+1}\|^2 - \|\nabla u^n\|^2 \big) 
+ \frac{1}{2} (2\theta -1) \|\nabla (u^{n+1} - u^n)\|^2 
+ \frac{3}{4}\nu \tau \| \Delta u^{n+\theta} \|^2 
\leq
\frac{1}{\nu} \tau \|f\|^2_{\infty} 
- \tau  b(u^{n+\theta},u^{n+\theta},-\Delta u^{n+\theta}).
\end{align*}

Next, we use the H\"{o}lder inequality with $p=4$, $q=4$ , $r=2$, the Ladyzhenskaya inequality \eqref{ladyzhenskaya}, and the Young inequality to bound the non-linear term, and we get
\begin{confidential}
    {\color{red}
    \begin{align*}
&\text{(Using H\"{o}lder's inequality with p=4,q=4,r=2)}
\\&\leq\frac{1}{\nu} \tau \|f\|^2_{\infty} 
+ \tau \|u^{n+\theta}\|_{L^4(\Omega)}\|\nabla u^{n+\theta}\|_{L^4(\Omega)}\|\Delta u^{n+\theta}\|
\\
&
\Big(\text{Using Ladyzhenskaya's inequality in 2D from \cite{MR2808162}} \;\;\|u\|_{L^4(\Omega)}\leq 2^{-1/4}\|u\|_{L^2(\Omega)}^{1/2}\|\nabla u\|_{L^2(\Omega)}^{1/2}\Big)
\\
&
\leq
\frac{1}{\nu} \tau \|f\|^2_{\infty} 
+ \tau 2^{-1/4}\|u^{n+\theta}\|^{1/2}\|\nabla u^{n+\theta}\|^{1/2}
2^{-1/4}\|\nabla u^{n+\theta}\|^{1/2}\|\Delta u^{n+\theta}\|^{1/2}\|\Delta u^{n+\theta}\|
\\
&
=
\frac{1}{\nu} \tau \|f\|^2_{\infty} 
+ \frac{\tau}{\sqrt{2}} \|u^{n+\theta}\|^{1/2}\|\nabla u^{n+\theta}\|\|\Delta u^{n+\theta}\|^{3/2}z\frac{1}{z}
\\
&
\text{(Where $z>0$ will be picked later. Now using Young's inequality with $p=4$, $q=\frac{4}{3}$)}
\\
&
\leq
\frac{1}{\nu} \tau \|f\|^2_{\infty} 
+ \tau\frac{1}{4}\frac{1}{4}\frac{1}{z^4} \|u^{n+\theta}\|^{2}\|\nabla u^{n+\theta}\|^4
+\tau\frac{3}{4}z^{4/3}\|\Delta u^{n+\theta}\|^{2}
\\
&
\text{Now picking $z^4=\frac{\nu^3}{27}$}
\\
&
\leq
\frac{1}{\nu} \tau \|f\|^2_{\infty} 
+ \tau\frac{1}{16}\frac{27}{\nu^3} \|u^{n+\theta}\|^{2}\|\nabla u^{n+\theta}\|^4
+\tau\frac{\nu}{4}\|\Delta u^{n+\theta}\|^{2}
\end{align*}
    }
\end{confidential}
\begin{align}
\label{eq: energyineq1}
    &\frac{1}{2} \big( \|\nabla u^{n+1}\|^2 - \|\nabla u^n\|^2 \big) 
+ \frac{1}{2} (2\theta -1) \|\nabla (u^{n+1} - u^n)\|^2 
+ \frac{1}{2}\nu \tau \| \Delta u^{n+\theta} \|^2 
\\
&
\notag
\leq
\frac{1}{\nu} \tau \|f\|^2_{\infty} 
+ \tau\frac{27}{16}\frac{1}{\nu^3} \|u^{n+\theta}\|^{2}\|\nabla u^{n+\theta}\|^4.
\end{align}

\begin{confidential}
    {\color{red}
    Noting $(a+b)^2\leq 2a^2+2b^2$, And noting $(a+b)^4\leq 2(2^2a^4+2^2b^4)$,
    }
\end{confidential}

Since $u^{n+\theta}=\theta u^{n+1}+(1-\theta)u^n$, we have
\begin{align*}
    \|u^{n+\theta}\|^2&=\|\theta u^{n+1}+(1-\theta)u^n\|^2
    \leq
    2(\|u^{n+1}\|^2+\|u^{n}\|^2),
    \\
    \|\nabla u^{n+\theta}\|^4&=\|\theta \nabla u^{n+1}+(1-\theta)\nabla u^n\|^4
    \leq
    2^3(\|\nabla u^{n+1}\|^4+\|\nabla u^{n}\|^4).
\end{align*}
\begin{confidential}
    {\color{red}
    Since $u^{n+\theta}=\theta u^{n+1}+(1-\theta)u^n$, and noting $(a+b)^2\leq 2a^2+2b^2$,
\begin{align*}
    \|u^{n+\theta}\|^2&=\|\theta u^{n+1}+(1-\theta)u^n\|^2
    \\
    &
    \leq
    2\theta^2\|u^{n+1}\|^2+2(1-\theta)^2\|u^n\|^2
    \\
    &
    \leq
    2(\|u^{n+1}\|^2+\|u^{n}\|^2)
    \leq
    4K_1
\end{align*}
    }
\end{confidential}
\begin{confidential}
    {\color{red}
    \begin{align*}
    \|\nabla u^{n+\theta}\|^4&=\|\theta \nabla u^{n+1}+(1-\theta)\nabla u^n\|^4
    \\
    &
    \leq
    2(2^2\theta^4\|\nabla u^{n+1}\|^4+2^2(1-\theta)^4\|\nabla u^n\|^4)
    \\
    &
    \leq
    2^3(\|\nabla u^{n+1}\|^4+\|\nabla u^{n}\|^4)
\end{align*}
    }
\end{confidential}
Therefore, \eqref{eq: energyineq1} yields
\begin{align*}
        &  \|\nabla u^{n+1}\|^2 - \|\nabla u^n\|^2  
+  (2\theta -1) \|\nabla (u^{n+1} - u^n)\|^2 
+ \nu \tau \| \Delta u^{n+\theta} \|^2 
\leq
\frac{2}{\nu} \tau \|f\|^2_{\infty} 
+ \tau\frac{54}{\nu^3}  2K_1\|\nabla u^{n+1}\|^4
+\tau\frac{54}{\nu^3}  2K_1\|\nabla u^{n}\|^4,
\end{align*}
\begin{confidential}
    {\color{red}
    \begin{align*}
        &  \|\nabla u^{n+1}\|^2 - \|\nabla u^n\|^2  
+  (2\theta -1) \|\nabla (u^{n+1} - u^n)\|^2 
+ \nu \tau \| \Delta u^{n+\theta} \|^2 
\\
&
\leq
\frac{2}{\nu} \tau \|f\|^2_{\infty} 
+ \tau\frac{3^3}{2^3}\frac{1}{\nu^3} \|u^{n+\theta}\|^{2}\|\nabla u^{n+\theta}\|^4
\\
&
\leq
\frac{2}{\nu} \tau \|f\|^2_{\infty} 
+ \tau\frac{3^3}{2^3}\frac{1}{\nu^3}  2(\|u^{n+1}\|^2+\|u^{n}\|^2)2^3(\|\nabla u^{n+1}\|^4+\|\nabla u^{n}\|^4)
\\
&
=
\frac{2}{\nu} \tau \|f\|^2_{\infty} 
+ \tau3^3\cdot 2\frac{1}{\nu^3}  (\|u^{n+1}\|^2+\|u^{n}\|^2)\|\nabla u^{n+1}\|^4
+\tau3^3\cdot 2\frac{1}{\nu^3}  (\|u^{n+1}\|^2+\|u^{n}\|^2)\|\nabla u^{n}\|^4
\\
&
\leq
\frac{2}{\nu} \tau \|f\|^2_{\infty} 
+ \tau\frac{54}{\nu^3}  2K_1\|\nabla u^{n+1}\|^4
+\tau\frac{54}{\nu^3}  2K_1\|\nabla u^{n}\|^4
\end{align*}
    }
\end{confidential}
and moreover
\begin{align}\label{eq:quadraticeq}
    0 &\leq  
 \tau\frac{54}{\nu^3}  2K_1\|\nabla u^{n+1}\|^4
-\|\nabla u^{n+1}\|^2
+\|\nabla u^n\|^2
+\tau\frac{54}{\nu^3}  2K_1\|\nabla u^{n}\|^4
+\frac{2}{\nu} \tau \|f\|^2_{\infty}.
\end{align}

Before we state the first lemma of this section, we define some notations. Note that for any $\theta\in\left(\frac{1}{2},1\right]$ there exists two positive constants $\delta_1,\varepsilon_1>0$ such that
\begin{align}
    C_2 &:=2^{-1/2}\theta(1+2\theta)\sqrt{K_1},\\
    C_3 &:=\nu\theta^2+\frac{\nu}{2}\theta^2 -\epsilon_1-\nu\theta(2-\theta)\frac{\delta_1}{2}>0,\\
    C_4 &:=\frac{1}{\epsilon_1}C_2^2-\nu(1-\theta)^2+\nu- \frac{\nu}{2}\theta^2+\nu\theta(2-\theta)\frac{1}{2\delta_1}>0.
\end{align}

\begin{lemma}\label{lemmaind}
    Suppose that the time-step restriction \eqref{eq:step-condition} holds, and assume that, for some $n$, we have
    \begin{align}\label{eq:bound}
    \tau\frac{108}{\nu^3}K_1
\left[\left(\frac{2}{\nu C_3}\frac{1}{\lambda_1} +\frac{2}{\nu^2}\frac{1}{\lambda_1} \right)\|f\|^2_{\infty}+\left(1+\frac{C_4}{C_3}\right)\|\nabla u^{n}\|^2+\frac{108}{\nu^4}\frac{1}{\lambda_1} K_1\|\nabla u^{n}\|^4\right]
\leq\frac{1}{5}.
    \end{align}
    Then 
    \begin{align}\label{eq:2.22}
    \|\nabla u^{n+1}\|^2 &\leq  \|\nabla u^{n}\|^2\left(1+\tau\frac{108}{\nu^3}K_1\|\nabla u^{n}\|^2\right)\left[1+2\tau\frac{108}{\nu^3}K_1\left(\|\nabla u^{n}\|^2+\tau\frac{108}{\nu^3}K_1\|\nabla u^{n}\|^4\right)\right] + \frac{18}{5\nu}\tau \|f\|^2_{\infty}.
    \end{align}
\end{lemma}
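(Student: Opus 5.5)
The plan is to read \eqref{eq:quadraticeq} as a quadratic inequality in $y:=\|\nabla u^{n+1}\|^2$. Set
\[
a:=\tau\frac{108}{\nu^3}K_1,\qquad c:=\|\nabla u^n\|^2+a\|\nabla u^n\|^4+\frac{2}{\nu}\tau\|f\|_\infty^2 ,
\]
so that \eqref{eq:quadraticeq} says $a y^2-y+c\ge 0$. Hence either $y\le y_-:=\frac{1-\sqrt{1-4ac}}{2a}$ or $y\ge y_+:=\frac{1+\sqrt{1-4ac}}{2a}$, provided $4ac\le 1$. The proof has two parts. First, exclude the large root using an independent a priori bound on $y$, which should be linear in $\|\nabla u^n\|^2$. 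Second, estimate $y_-$ explicitly to reach \eqref{eq:2.22}.

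\textbf{Step 1: crude bound on $\|\nabla u^{n+1}\|^2$.} I would test \eqref{eq:STAR} with $\tau A u^{n+1}$ and write $u^{n+\theta}=\theta u^{n+1}+(1-\theta)u^n$ in the viscous term, so that $\nu\tau(Au^{n+\theta},Au^{n+1})$ produces $\nu\tau\theta\|Au^{n+1}\|^2$ plus a cross term. The time-difference term gives $\frac12(\|\nabla u^{n+1}\|^2-\|\nabla u^n\|^2+\|\nabla(u^{n+1}-u^n)\|^2)$.

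The cross term $\nu(1-\theta)\tau(Au^n,Au^{n+1})$ and the remaining terms are split with Young's inequality, using the weight $\delta_1$. The nonlinear term $b(u^{n+\theta},u^{n+\theta},Au^{n+1})$ is bounded via H\"older and \eqref{ladyzhenskaya} together with \eqref{eq:energy2.1}. The factor $\theta(1+2\theta)\sqrt{K_1}/\sqrt2=C_2$ should come from expanding $u^{n+\theta}$ and using $\|u^{n+\theta}\|^2\le 2(2\theta^2-2\theta+1)K_1$. That term is then absorbed with the weight $\epsilon_1$, which is exactly how $C_3$ and $C_4$ are defined. The forcing term is bounded by $\frac{1}{\nu}\tau\|f\|_\infty^2$-type terms, with Poincar\'e \eqref{PF} supplying the $1/\lambda_1$.

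After discarding the positive dissipation on the left, the aim is a bound of the form
\[
y\le \frac{C_4}{C_3}\|\nabla u^n\|^2+\frac{2}{\nu C_3\lambda_1}\|f\|_\infty^2+(\text{terms matching the rest of the bracket in \eqref{eq:bound}}).
\]
The pieces $\frac{2}{\nu^2\lambda_1}\|f\|^2_\infty$ and $\frac{108}{\nu^4\lambda_1}K_1\|\nabla u^n\|^4$ in \eqref{eq:bound} suggest that $c$ itself also gets bounded via Poincar\'e and \eqref{eq:step-condition}. The precise goal is that the whole bracket in \eqref{eq:bound} dominates both $y$ and $c$. This is the step I expect to be the main obstacle: tracking the constants so that they match $C_2,C_3,C_4$, and in particular handling the nonlinear term without picking up higher powers of $\|\nabla u^{n+1}\|$.

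\textbf{Step 2: select the small root.} Denote the bracket in \eqref{eq:bound} by $M$. Then \eqref{eq:bound} reads $aM\le\frac15$, and Step 1 gives $y\le M$ and $c\le M$. Hence $4ac\le\frac45<1$, so both roots are real. Moreover $ay\le\frac15<\frac12\le a y_+$, so $y<y_+$ and therefore $y\le y_-$.

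\textbf{Step 3: estimate $y_-$.} Write $y_-=\frac{2c}{1+\sqrt{1-x}}$ with $x=4ac\in[0,\frac45]$. The elementary inequality $\frac{1}{1+\sqrt{1-x}}\le\frac12\left(1+\frac x2\right)$ on $[0,\frac45]$ gives $y_-\le c(1+2ac)$; the inequality can be checked at the endpoint and follows by convexity.

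Now split $c=c_0+\frac2\nu\tau\|f\|_\infty^2$ with $c_0=\|\nabla u^n\|^2(1+a\|\nabla u^n\|^2)$. For the $c_0$ part I would use $1+2ac\le 1+2a c_0+2a\cdot\frac2\nu\tau\|f\|^2_\infty$ and route the extra forcing contribution into the last term. For the forcing part I would use the crude factor $1+4ac\le\frac95$, which gives $\frac2\nu\cdot\frac95=\frac{18}{5\nu}$. These combine to \eqref{eq:2.22}, possibly after a slight rebalancing of the two factor estimates, for instance using $2ac\le\frac25$ on the cross terms.
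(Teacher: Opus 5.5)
\textbf{Verdict: there is a genuine gap, and it is in Step 1.}

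Your Steps 2 and 3 coincide with the paper's argument. The large root is excluded because $ay\le aM\le\frac15<\frac12\le ay_+$. Then $y\le y_-=\frac{2c}{1+\sqrt{1-x}}\le c(1+2ac)$. The forcing bookkeeping closes with no rebalancing: with $F=\frac2\nu\tau\|f\|_\infty^2$ and $c_0\le c$, you get $F(1+2ac)+2ac_0F\le F(1+4ac)\le\frac95F$.

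Step 1 fails for a structural reason, not a matter of constant tracking. Testing \eqref{eq:STAR} with $\tau Au^{n+1}$ cannot give a bound on $\|\nabla u^{n+1}\|^2$ that is linear in $\|\nabla u^n\|^2$.
\begin{itemize}
\item The viscous cross term $\nu(1-\theta)\tau(Au^n,Au^{n+1})$ leaves $\|Au^n\|^2$ on the right after Young's inequality, and nothing in \eqref{eq:bound} controls it.
\item The test is not even legitimate in general. Since $u_0\in V$ only, $u^{n+\theta}\in D(A)$, and $u^{n+1}=\frac1\theta u^{n+\theta}-\frac{1-\theta}{\theta}u^n$, one has $u^{n+1}\in D(A)$ exactly when $u^n\in D(A)$. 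So $Au^{n+1}$ need not lie in $H$.
\item The nonlinear term $b(u^{n+\theta},u^{n+\theta},Au^{n+1})$ necessarily involves second derivatives of $u^{n+\theta}$. After Ladyzhenskaya and absorption into the $\|A\cdot\|^2$ dissipation, what remains is of order $\nu^{-3}\|u^{n+\theta}\|^2\|\nabla u^{n+\theta}\|^4$. That contains $\|\nabla u^{n+1}\|^4$, which is exactly how \eqref{eq:quadraticeq} arose.
\end{itemize}
So an $H^1$-level test only reproduces the quadratic inequality and cannot tell you which root you are on. The obstacle you flagged cannot be removed along this route. Your guess about $C_2$ is also off: the bound $\|u^{n+\theta}\|^2\le 2(2\theta^2-2\theta+1)K_1$ is what produces $C_1$ in Lemma \ref{lemma:gradu1}, not $C_2$.

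\textbf{The missing idea} is an $L^2$-level test in which the gradient enters only through the viscous term. The paper tests the Backward Euler substep \eqref{eq:BENSE} with $2\tau(u^{n+\theta}-u^n)=2\theta\tau(u^{n+1}-u^n)$ in $H$. The pieces are then handled as follows.
\begin{itemize}
\item The time-difference term gives $\frac2\theta\|u^{n+\theta}-u^n\|^2\ge0$, which is dropped.
\item The viscous term gives $\nu\tau\big(\|\nabla u^{n+\theta}\|^2-\|\nabla u^n\|^2+\|\nabla(u^{n+\theta}-u^n)\|^2\big)$.
\item By skew-symmetry the nonlinear term becomes $b(u^{n+\theta},u^n,u^{n+\theta})=\theta^2 b(u^{n+1},u^n,u^{n+1})-\theta(1-\theta)b(u^n,u^{n+1},u^n)$, which involves first derivatives only. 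Ladyzhenskaya and $\|u^k\|^2\le K_1$ bound it by $2^{-1/2}(2\theta^2+\theta)\sqrt{K_1}\,\|\nabla u^n\|\|\nabla u^{n+1}\|=C_2\|\nabla u^n\|\|\nabla u^{n+1}\|$, which is then split with the weight $\epsilon_1$.
\item The forcing is bounded by $\frac{\nu\tau}{2}\|\nabla(u^{n+\theta}-u^n)\|^2+\frac{2\tau}{\nu\lambda_1}\|f\|_\infty^2$.
\end{itemize}
Next, expand $\nabla u^{n+\theta}$ in terms of $\nabla u^{n+1}$ and $\nabla u^n$, and split the remaining cross term $(\nabla u^{n+1},\nabla u^n)$ with the weight $\delta_1$. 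This gives
\[
C_3\|\nabla u^{n+1}\|^2\le\frac{2}{\nu\lambda_1}\|f\|_\infty^2+C_4\|\nabla u^n\|^2,
\]
which is precisely the linear a priori bound your Step 2 needs.
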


\begin{proof}
We begin by noting that the right-hand side of \eqref{eq:quadraticeq} is a quadratic polynomial in $\|\nabla u^{n+1}\|^2$, hence either
\begin{equation}\label{eq:lessthanbound}
    \|\nabla u^{n+1}\|^2\leq\frac{1-\sqrt{\Delta_n}}{2K_1\tau\frac{108}{\nu^3}},
\end{equation}
or
\begin{equation}\label{eq: greaterthanbound}
    \|\nabla u^{n+1}\|^2\geq\frac{1+\sqrt{\Delta_n}}{2K_1\tau\frac{108}{\nu^3}},
\end{equation}
where
\[
\Delta_n=1-4\tau\frac{108}{\nu^3} K_1\left(\|\nabla u^n\|^2
+\tau\frac{108}{\nu^3}K_1\|\nabla u^{n}\|^4
+\frac{2}{\nu} \tau \|f\|^2_{\infty}\right).
\]
Moreover, we note that \eqref{eq:step-condition} and \eqref{eq:bound} imply $\Delta_n> 0$. We now show that only \eqref{eq:lessthanbound} holds.

\begin{confidential}
    {\color{red}
    Recall from \eqref{eq:BENSE} and \eqref{eq:FENSE},
\begin{align*}
&\frac{1}{\theta\tau}(u^{n+\theta}-u^n)-\nu\Delta u^{n+\theta}+u^{n+\theta}\cdot\nabla u^{n+\theta}+\nabla p^{n+\theta}=f^{n+\theta}
\\
&
\nabla\cdot u^{n+\theta}=0
\end{align*}
    }
\end{confidential}

Testing \eqref{eq:BENSE} with $2\tau(u^{n+\theta}-u^n)$ in $H$ and using the divergence theorem and polarization identity gives
\begin{confidential}
    {\color{red}
    \begin{align*}
    &\frac{2}{\theta}\|u^{n+\theta}-u^n\|^2
    -2\nu\tau(\Delta u^{n+\theta},u^{n+\theta}-u^n)
    +2\tau( u^{n+\theta}\cdot\nabla u^{n+\theta},u^{n+\theta}-u^n)
    =2\tau( f^{n+\theta},u^{n+\theta}-u^n)
\end{align*}
\begin{align*}
    &\frac{2}{\theta}\|u^{n+\theta}-u^n\|^2
    +2\nu\tau(\nabla u^{n+\theta},\nabla u^{n+\theta}-\nabla u^n)
    +2\tau( u^{n+\theta}\cdot\nabla u^{n+\theta},u^{n+\theta}-u^n)
    =2\tau( f^{n+\theta},u^{n+\theta}-u^n)
\end{align*}
From here, let's denote 
\[
b(u,v,w)=( u\cdot\nabla v,w)
\]
Since $\|a\|^2-\|b\|^2+\|a-b\|^2=2( a,a-b)$ (polarization identity),
    }
\end{confidential}
\begin{align}\label{eq:label1}
     &\frac{2}{\theta}\|u^{n+\theta}-u^n\|^2
    +\nu\tau\|\nabla u^{n+\theta}\|^2
    -\nu\tau\|\nabla u^n\|^2
    +\nu\tau\|\nabla u^{n+\theta}-\nabla u^n\|^2
    +2\tau b(u^{n+\theta},u^{n+\theta},u^{n+\theta}-u^n)
    \\
    &
    \notag
    =2\tau( f^{n+\theta},u^{n+\theta}-u^n).
\end{align}
Applying the properties of the trilinear form and the H\"{o}lder, Ladyzhenskaya, and Young inequalities yields
\begin{confidential}
    {\color{red}
    \begin{align*}
    b(u^{n+\theta},u^{n+\theta},u^{n+\theta}-u^n) &= b(u^{n+\theta},u^{n+\theta}, -u^n)\\
    &= b(u^{n+\theta},u^n,u^{n+\theta})
\end{align*}
We want to write $u^{n+\theta}$ in terms of $u^{n+1}$ and $u^n$. Using properties defined in the Mathematical Preliminaries section and Young's inequality:
\begin{align*}
    b(u^{n+\theta},u^n,u^{n+\theta}) 
    &
    =
    b(\theta u^{n+1}+(1-\theta) u^n,u^n,\theta u^{n+1}+(1-\theta )u^n)
    \\
    &
    =
    b(\theta u^{n+1},u^n,\theta u^{n+1})
    \\
    &
    +
    b(\theta u^{n+1},u^n,(1-\theta) u^{n})
    \\
    &
    +
    b((1-\theta) u^{n},u^n,\theta u^{n+1})
    \\
    &
    +
    b((1-\theta) u^{n},u^n,(1-\theta) u^{n})
    \\
    &
    =
    \theta^2 b(u^{n+1},u^n,u^{n+1})+(1-\theta)\theta b(u^n,u^n,u^{n+1})
    \\
    &
    = 
    \theta^2 b(u^{n+1},u^n,u^{n+1})+\theta b(u^n,u^n,u^{n+1})-\theta^2b(u^n,u^n,u^{n+1})
    \\
    &
    =
    \theta^2 b(u^{n+1},u^n,u^{n+1})-\theta b(u^n,u^{n+1},u^{n})+\theta^2 b(u^n,u^{n+1},u^{n})
    \\
    &
    \leq
    2^{-1/2}\theta^2\|u^{n+1}\|^{1/2}\|\nabla u^{n+1}\|^{1/2}\|\nabla u^{n}\|\|\nabla u^{n+1}\|^{1/2}\|u^{n+1}\|^{1/2}
    \\
    &
    +
    2^{-1/2}\theta\|u^{n}\|^{1/2}\|\nabla u^{n}\|^{1/2}\|\nabla u^{n+1}\|\|\nabla u^{n}\|^{1/2}\|u^{n}\|^{1/2}
    \\
    &
    +
    2^{-1/2}\theta^2\|u^{n}\|^{1/2}\|\nabla u^{n}\|^{1/2}\|\nabla u^{n+1}\|\|\nabla u^{n}\|^{1/2}\|u^{n}\|^{1/2}
    \\
    &
    =
    2^{-1/2}\theta^2\|u^{n+1}\|\|\nabla u^{n+1}\|\|\nabla u^{n}\|+2^{-1/2}\theta^2\|u^{n}\|\|\nabla u^{n}\|\|\nabla u^{n+1}\|+2^{-1/2}\theta\|u^{n}\|\|\nabla u^{n}\|\|\nabla u^{n+1}\|
    \\
    &
    \leq
    2^{-1/2}\theta^2\sqrt{K_1}\|\nabla u^{n+1}\|\|\nabla u^{n}\|+2^{-1/2}\theta^2\sqrt{K_1}\|\nabla u^{n}\|\|\nabla u^{n+1}\|+2^{-1/2}\theta\sqrt{K_1}\|\nabla u^{n}\|\|\nabla u^{n+1}\|
    \\
    &
    = 
    2^{-1/2}(2\theta^2\sqrt{K_1}+\theta\sqrt{K_1})\|\nabla u^{n}\|\|\nabla u^{n+1}\|
    \\
    &
    \leq
    \frac{1}{2\epsilon}C_2^2\|\nabla u^n\|^2+\frac{\epsilon}{2}\|\nabla u^{n+1}\|^2
\end{align*}
Where  $\epsilon>0$ will be picked later.\\
    }
\end{confidential}
\begin{align*}
    &b(u^{n+\theta},u^{n+\theta},u^{n+\theta}-u^n) 
    = b(u^{n+\theta},u^n,u^{n+\theta})
    \\
    &
    =
    b(\theta u^{n+1}+(1-\theta) u^n,u^n,\theta u^{n+1}+(1-\theta )u^n)
    \\
    &
    =
    \theta^2 b(u^{n+1},u^n,u^{n+1})-\theta b(u^n,u^{n+1},u^{n})+\theta^2 b(u^n,u^{n+1},u^{n})
    \\
    &
    \leq
    2^{-1/2}\theta^2\|u^{n+1}\|\|\nabla u^{n+1}\|\|\nabla u^{n}\|+2^{-1/2}\theta^2\|u^{n}\|\|\nabla u^{n}\|\|\nabla u^{n+1}\|+2^{-1/2}\theta\|u^{n}\|\|\nabla u^{n}\|\|\nabla u^{n+1}\|
    \\
    &
    \leq
    2^{-1/2}(2\theta^2\sqrt{K_1}+\theta\sqrt{K_1})\|\nabla u^{n}\|\|\nabla u^{n+1}\|
    \\
    &
    \leq
    \frac{1}{2\epsilon_1}C_2^2\|\nabla u^n\|^2+\frac{\epsilon_1}{2}\|\nabla u^{n+1}\|^2.
\end{align*}
We use the H\"{o}lder, Young, and Poincar\'{e}-Friedrichs inequalities to bound the forcing term
\begin{align*}
    2\tau( f^{n+\theta},u^{n+\theta}-u^n) 
    \leq \frac{\nu\tau}{2}\|\nabla(u^{n+\theta}-u^n)\|^2+\tau\frac{2}{\nu} \frac{1}{\lambda_1}\|f^{n+\theta}\|^2.
\end{align*}
\begin{confidential}
    {\color{red}
    \begin{align*}
    2\tau( f^{n+\theta},u^{n+\theta}-u^n) & \leq 2\tau\|f^{n+\theta}\|\|u^{n+\theta}-u^n\|\\
    &
    \leq \frac{\nu\tau}{2}\|\nabla(u^{n+\theta}-u^n)\|^2+\tau\frac{2}{\nu} \frac{1}{\lambda_1}\|f^{n+\theta}\|^2
\end{align*}
    }
\end{confidential}
Plugging these two inequalities  into \eqref{eq:label1} implies
\begin{align}\label{eq:label2}
    &\frac{2}{\theta}\|u^{n+\theta}-u^n\|^2
    +\nu\tau\|\nabla u^{n+\theta}\|^2
    -\nu\tau\|\nabla u^n\|^2
    +\nu\tau\|\nabla u^{n+\theta}-\nabla u^n\|^2
    \\
    &
    \notag
    \leq
    \frac{2\tau}{2\epsilon_1}C_2^2\|\nabla u^n\|^2+\frac{2\tau\epsilon_1}{2}\|\nabla u^{n+1}\|^2+\frac{\nu\tau}{2}\|\nabla(u^{n+\theta}-u^n)\|^2+\frac{2\tau}{\nu} \frac{1}{\lambda_1}\|f^{n+\theta}\|^2.
\end{align}
Using $u^{n+\theta}=\theta u^{n+1}+(1-\theta)u^n$, we write the norms of $u^{n+\theta}$ in terms of $u^{n+1}$ and $u^n$
\begin{confidential}
    {\color{red}
    \begin{align*}
    \|\nabla u^{n+\theta}\|^2 &= \| \theta\nabla u^{n+1}+(1-\theta)\nabla u^n\|^2
    \\
    &
    = \theta^2 \|\nabla u^{n+1}\|^2 + (1-\theta)^2\|\nabla u^{n}\|^2+2\theta(1-\theta)(\nabla u^{n+1},\nabla u^n)
\end{align*}
\begin{align*}
    \|\nabla u^{n+\theta}-\nabla u^n\|^2 &= \| \theta\nabla u^{n+1}-\theta\nabla u^n\|^2
    \\
    &
    = \theta^2 \|\nabla u^{n+1}\|^2 + \theta^2\|\nabla u^{n}\|^2+2\theta^2(\nabla u^{n+1},\nabla u^n)
\end{align*}
    }
\end{confidential}
\begin{align*}
    \|\nabla u^{n+\theta}\|^2 
    &= \theta^2 \|\nabla u^{n+1}\|^2 + (1-\theta)^2\|\nabla u^{n}\|^2+2\theta(1-\theta)(\nabla u^{n+1},\nabla u^n),\\
    \|\nabla u^{n+\theta}-\nabla u^n\|^2 
    &= \theta^2 \|\nabla u^{n+1}\|^2 + \theta^2\|\nabla u^{n}\|^2+2\theta^2(\nabla u^{n+1},\nabla u^n),
\end{align*}
so \eqref{eq:label2} becomes
\begin{confidential}
    {\color{red}
    \begin{align*}
     &\frac{2}{\theta}\|u^{n+\theta}-u^n\|^2\\
    &+\nu\tau\theta^2 \|\nabla u^{n+1}\|^2 + \nu\tau(1-\theta)^2\|\nabla u^{n}\|^2+2\nu\tau\theta(1-\theta)(\nabla u^{n+1},\nabla u^n)\\
    &-\nu\tau\|\nabla u^n\|^2\\
    &+\frac{\nu\tau}{2}\theta^2 \|\nabla u^{n+1}\|^2 + \frac{\nu\tau}{2}\theta^2\|\nabla u^{n}\|^2+\nu\tau\theta^2(\nabla u^{n+1},\nabla u^n)
    \\
    &
    \leq
    \frac{\tau}{\epsilon}C_2^2\|\nabla u^n\|^2+\tau\epsilon\|\nabla u^{n+1}\|^2+\frac{2\tau}{\nu} \frac{1}{\lambda_1}\|f^{n+\theta}\|^2
\end{align*}
Rearranging,
    }
\end{confidential}
\begin{align*}
    &\frac{2}{\theta}\|u^{n+\theta}-u^n\|^2+(\nu\tau\theta^2+\frac{\nu\tau}{2}\theta^2 -\tau\epsilon)\|\nabla u^{n+1}\|^2 
    + (\nu\tau(1-\theta)^2-\nu\tau+ \frac{\nu\tau}{2}\theta^2-\frac{\tau}{\epsilon}C_2^2)\|\nabla u^{n}\|^2
    \\
    &
    \leq
    \frac{2\tau}{\nu} \frac{1}{\lambda_1}\|f^{n+\theta}\|^2-\nu\tau\theta(2-\theta)(\nabla u^{n+1},\nabla u^n).
\end{align*}
Moreover, using the H\"{o}lder, Young, and Poincar\'{e}-Friedrichs inequalities, we obtain
\begin{confidential}
    {\color{red}
    \begin{align*}
    &\left(\nu\tau\theta^2+\frac{\nu\tau}{2}\theta^2 -\tau\epsilon\right)\|\nabla u^{n+1}\|^2 \\
    &
    \leq
    \frac{2\tau}{\nu} \frac{1}{\lambda_1}\|f^{n+\theta}\|^2\\
    &+ \left(\frac{\tau}{\epsilon}C_2^2-\nu\tau(1-\theta)^2+\nu\tau- \frac{\nu\tau}{2}\theta^2\right)\|\nabla u^{n}\|^2\\
    &
    +\nu\tau\theta(2-\theta)\|\nabla u^{n+1}\|\|\nabla u^n\|\\
    &
    \leq
    \frac{2\tau}{\nu} \frac{1}{\lambda_1}\|f^{n+\theta}\|^2\\
    &+ (\frac{\tau}{\epsilon}C_2^2-\nu\tau(1-\theta)^2+\nu\tau- \frac{\nu\tau}{2}\theta^2)\|\nabla u^{n}\|^2\\
    &
    +\nu\tau\theta(2-\theta)\frac{\delta}{2}\|\nabla u^{n+1}\|^2+\nu\tau\theta(2-\theta)\frac{1}{2\delta}\|\nabla u^n\|^2\\
\end{align*}
where $\delta$ will be picked later. 

\begin{align*}
    (\nu\tau\theta^2+\frac{\nu\tau}{2}\theta^2 -\tau\epsilon)\|\nabla u^{n+1}\|^2 
    &\leq
    \frac{2\tau}{\nu} \frac{1}{\lambda_1}\|f^{n+\theta}\|^2
    \\&+ (\frac{\tau}{\epsilon}C_2^2-\nu\tau(1-\theta)^2+\nu\tau- \frac{\nu\tau}{2}\theta^2)\|\nabla u^{n}\|^2\\
    &
    +\nu\tau\theta(2-\theta)\frac{\delta}{2}\|\nabla u^{n+1}\|^2+\nu\tau\theta(2-\theta)\frac{1}{2\delta}\|\nabla u^n\|^2\\
\end{align*}

So finally we get
    }
\end{confidential}
\begin{align*}
    &\left(\nu\tau\theta^2+\frac{\nu\tau}{2}\theta^2 -\tau\epsilon_1-\nu\tau\theta(2-\theta)\frac{\delta_1}{2}\right)\|\nabla u^{n+1}\|^2 
    \\
    &
    \leq
    \frac{2\tau}{\nu} \frac{1}{\lambda_1}\|f\|^2_{\infty}+ \left(\frac{\tau}{\epsilon_1}C_2^2-\nu\tau(1-\theta)^2+\nu\tau- \frac{\nu\tau}{2}\theta^2+\nu\tau\theta(2-\theta)\frac{1}{2\delta_1}\right)\|\nabla u^{n}\|^2,
\end{align*}
\begin{confidential}
    {\color{red}
    We can fix $\epsilon =\epsilon_1$ and $\delta=\delta_1$ small enough such that 
\[C_3:=\nu\theta^2+\frac{\nu}{2}\theta^2 -\epsilon_1-\nu\theta(2-\theta)\frac{\delta_1}{2}>0\]
\[C_4:=\frac{1}{\epsilon_1}C_2^2-\nu(1-\theta)^2+\nu- \frac{\nu}{2}\theta^2+\nu\theta(2-\theta)\frac{1}{2\delta_1}>0\]
    }
\end{confidential}
so
\begin{align*}
    \|\nabla u^{n+1}\|^2&\leq\frac{2}{\nu C_3} \frac{1}{\lambda_1}\|f\|^2_{\infty}+\frac{C_4}{C_3}\|\nabla u^{n}\|^2.
\end{align*}
\begin{confidential}
    {\color{red}
    Our assumption for the proposition is that for some $n$, we have
\begin{align}\label{eq:boundp1}
\tau\frac{108}{\nu^3}K_1
\left[\left(\frac{2}{\nu C_3} +\frac{2}{\nu^2}\frac{1}{\lambda_1} \right)\|f\|^2_{\infty}+\left(1+\frac{C_4}{C_3}\right)\|\nabla u^{n}\|^2+\frac{108}{\nu^4}\frac{1}{\lambda_1} K_1\|\nabla u^n\|^4\right]
\leq\frac{1}{5}
\end{align}
    }
\end{confidential}
Now, from assumption \eqref{eq:bound} we have 
\begin{align*}
    2K_1\tau\frac{108}{\nu^3}\|\nabla u^{n+1}\|^2&\leq2K_1\tau\frac{108}{\nu^3} \left(\frac{2}{\nu C_3}\frac{1}{\lambda_1} \|f\|^2_{\infty}+\frac{C_4}{C_3}\|\nabla u^{n}\|^2\right)< 1,
\end{align*}
which contradicts \eqref{eq: greaterthanbound}. Therefore, we must have \eqref{eq:lessthanbound}
\begin{align}\label{eq:starmar27}
    \|\nabla u^{n+1}\|^2&\leq\frac{1-\sqrt{\Delta_n}}{2K_1\tau\frac{108}{\nu^3}}
    \\
    &
    \notag
    = \frac{1-\big(1-4\tau\frac{108}{\nu^3} K_1\big(\|\nabla u^n\|^2
+\tau\frac{108}{\nu^3}K_1\|\nabla u^{n}\|^4
+\frac{2}{\nu} \tau  
\|f\|^2_{\infty}\big)\big)^{1/2}}{2K_1\tau\frac{108}{\nu^3}}
\\
&
\notag
= \frac{2\big(\|\nabla u^n\|^2
+\tau\frac{108}{\nu^3}K_1\|\nabla u^{n}\|^4
+\frac{2}{\nu} \tau  \|f\|^2_{\infty}\big)}{1+(1-x)^{1/2}},
\end{align}
where 
\[
x = 4\tau\frac{108}{\nu^3} K_1\left(\|\nabla u^n\|^2
+\tau\frac{108}{\nu^3}K_1\|\nabla u^{n}\|^4
+\frac{2}{\nu} \tau  \|f\|^2_{\infty}\right).
\]

We now proceed to proving the conclusion \eqref{eq:2.22} of the lemma. Using the bounds \eqref{eq:step-condition} and \eqref{eq:bound}, we get
\begin{align*}
    x &= 4\tau\frac{108}{\nu^3} K_1\left(\|\nabla u^n\|^2
+\tau\frac{108}{\nu^3}K_1\|\nabla u^{n}\|^4
+\frac{2}{\nu} \tau  \|f\|^2_{\infty}\right)
\\
&
\leq 
 4\tau\frac{108}{\nu^3} K_1\left(\|\nabla u^n\|^2
+\frac{1 }{\lambda_1\nu}\frac{108}{\nu^3}K_1\|\nabla u^{n}\|^4
+\frac{2}{\nu} \frac{1 }{\lambda_1\nu}  \|f\|^2_{\infty}\right)\leq
\frac{4}{5},
\end{align*}
and since
\[\frac{2}{1+(1-x)^{1/2}}\leq1+\frac{x}{2},\]
for $0\leq x \leq \frac{4}{5}$, then by \eqref{eq:starmar27} we obtain that
\begin{align}\label{expr2}
    \|\nabla u^{n+1}\|^2&\leq 
    \notag
    \left(\|\nabla u^n\|^2
+\tau\frac{108}{\nu^3}K_1\|\nabla u^{n}\|^4
+\frac{2}{\nu} \tau  \|f\|^2_{\infty}\right)
\\
&
\notag
\qquad
\times\left(1+2\tau\frac{108}{\nu^3} K_1\left(\|\nabla u^n\|^2
+\tau\frac{108}{\nu^3}K_1\|\nabla u^{n}\|^4
+\frac{2}{\nu} \tau \|f\|^2_{\infty}\right)\right)
\\
    &= \|\nabla u^{n}\|^2\left(1+\tau\frac{108}{\nu^3}K_1\|\nabla u^{n}\|^2\right)+\frac{2}{\nu}\tau  \|f\|^2_{\infty}
    \\
    &
    \notag
    \qquad
    +2\tau\frac{108}{\nu^3}K_1\|\nabla u^{n}\|^4\left(1+\tau\frac{108}{\nu^3}K_1\|\nabla u^{n}\|^2\right)^2
    \\
    &
    \qquad
    \notag
    + 2\tau\frac{108}{\nu^3}K_1\|\nabla u^{n}\|^2\left(1+\tau\frac{108}{\nu^3}K_1\|\nabla u^{n}\|^2\right)\frac{2}{\nu}\tau \|f\|^2_{\infty}
    \\
    &
    \qquad
    \notag
    +2\tau\frac{108}{\nu^3}K_1\|\nabla u^{n}\|^2\left(1+\tau\frac{108}{\nu^3}K_1\|\nabla u^{n}\|^2\right)\frac{2}{\nu}\tau \|f\|^2_{\infty}
    \\
    &
    \qquad
    \notag
    + 2\tau\frac{108}{\nu^3}K_1\frac{4}{\nu^2}\tau^2 \|f\|^4_{\infty}.
    \end{align}

     Using the time-step restriction \eqref{eq:step-condition} and the assumption \eqref{eq:bound} in the right hand side of \eqref{expr2} the last three terms equate to
    \begin{align*}
        & \|f\|^2_{\infty}\bigg[2\tau\frac{108}{\nu^3}K_1\frac{2}{\nu}\tau\bigg(2\|\nabla u^{n}\|^2\big(1+\tau\frac{108}{\nu^3}K_1\|\nabla u^{n}\|^2\big)+\frac{2}{\nu}\tau \|f\|^2_{\infty}\bigg)\bigg]
        \leq
        \frac{8}{5\nu}\tau \|f\|^2_{\infty}.
    \end{align*}   
Adding this to the second term in the right-hand side of \eqref{expr2} gives 
    \[
    \frac{2}{\nu}\tau  \|f\|^2_{\infty}+\frac{4}{\nu}\tau \|f\|^2_{\infty}\cdot\frac{2}{5} = \frac{18}{5\nu}\tau \|f\|^2_{\infty}.
    \]
The rest of the terms equal
\begin{confidential}
    {\color{red}
    \begin{align*}
    &\|\nabla u^{n}\|^2\big(1+\tau\frac{108}{\nu^3}K_1\|\nabla u^{n}\|^2\big)\bigg[1+2\tau\frac{108}{\nu^3}K_1\|\nabla u^{n}\|^2\big(1+\tau\frac{108}{\nu^3}K_1\|\nabla u^{n}\|^2\big)\bigg]
    \\
    &
    =
    \|\nabla u^{n}\|^2\big(1+\tau\frac{108}{\nu^3}K_1\|\nabla u^{n}\|^2\big)\bigg[1+2\tau\frac{108}{\nu^3}K_1\left(\|\nabla u^{n}\|^2+\tau\frac{108}{\nu^3}K_1\|\nabla u^{n}\|^4\right)\bigg]
\end{align*}
    }
\end{confidential}
\begin{align*}
    \|\nabla u^{n}\|^2\left(1+\tau\frac{108}{\nu^3}K_1\|\nabla u^{n}\|^2\right)\left[1+2\tau\frac{108}{\nu^3}K_1\left(\|\nabla u^{n}\|^2+\tau\frac{108}{\nu^3}K_1\|\nabla u^{n}\|^4\right)\right].
\end{align*}
Therefore, putting everything together, we obtain the bound \eqref{eq:2.22}.
\begin{confidential}
    {\color{red}
    
So we arrive at the result of the lemma:
\begin{align} 
    \|\nabla u^{n+1}\|^2 &\leq  \|\nabla u^{n}\|^2\big(1+\tau\frac{108}{\nu^3}K_1\|\nabla u^{n}\|^2\big)\bigg[1+2\tau\frac{108}{\nu^3}K_1\left(\|\nabla u^{n}\|^2+\tau\frac{108}{\nu^3}K_1\|\nabla u^{n}\|^4\right)\bigg] + \frac{18}{5\nu}\tau \|f\|^2_{\infty}
\end{align}

    }
\end{confidential}
\end{proof}

For reader's convenience we recall a result similar to the Discrete Gr\"{o}nwall Lemma proved in \cite{MR2217369,MR1116181}.
\begin{lemma}[Discrete Gr\"{o}nwall Lemma]\label{lemma:DGL}
    Given $\tau>0$, positive integer $n_*>0$ and positive sequences $\xi_n,\eta_n,\zeta_n,\alpha_n$ such that
    \begin{equation}\label{eq:lemma6}
        \xi_n\leq\xi_{n-1}(1+\tau\alpha_{n-1})(1+\tau\eta_{n-1})+\tau\zeta_n
    \end{equation}
    for $n=1,\dots n_*$.\\
    We have, for any $n\in\{2,\dots,n_*\}$,
    \begin{equation}\label{1star1}
        \xi_n\leq\xi_0\exp\bigg(\sum_{i=0}^{n-1}\tau\eta_i\bigg)\exp\bigg(\sum_{i=0}^{n-1}\tau\alpha_i\bigg)+\sum_{i=1}^{n-1}\Bigg[\tau\zeta_i\exp\bigg(\sum_{j=i}^{n-1}\tau\eta_j\bigg)\exp\bigg(\sum_{j=i}^{n-1}\tau\alpha_j\bigg)\Bigg]+\tau\zeta_n.
    \end{equation}
\end{lemma}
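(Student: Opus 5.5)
The plan is a direct induction on $n$, unrolling the recursion \eqref{eq:lemma6} one step at a time, with no new estimates needed. First I introduce the shorthand $E_{i,n}:=\exp\big(\sum_{j=i}^{n-1}\tau\eta_j\big)\exp\big(\sum_{j=i}^{n-1}\tau\alpha_j\big)$ for $0\le i\le n-1$, and set $E_{n,n}:=1$. With this notation the target \eqref{1star1} reads
\begin{equation*}
\xi_n\le \xi_0E_{0,n}+\sum_{i=1}^{n-1}\tau\zeta_iE_{i,n}+\tau\zeta_n .
\end{equation*}
The only analytic input is the elementary inequality $1+y\le e^{y}$ for $y\ge0$. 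Since all sequences are positive, it gives $(1+\tau\alpha_{k})(1+\tau\eta_{k})\le e^{\tau\alpha_k}e^{\tau\eta_k}$. It also gives the multiplicative identity $E_{i,n}\,e^{\tau\alpha_n+\tau\eta_n}=E_{i,n+1}$.

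For the base case $n=2$, I apply \eqref{eq:lemma6} with $n=1$ to get $\xi_1\le \xi_0(1+\tau\alpha_0)(1+\tau\eta_0)+\tau\zeta_1$. I substitute this into \eqref{eq:lemma6} with $n=2$ and bound each product of factors $(1+\tau\alpha_k)(1+\tau\eta_k)$ by the corresponding exponential. This yields $\xi_2\le \xi_0E_{0,2}+\tau\zeta_1E_{1,2}+\tau\zeta_2$, which is \eqref{1star1} for $n=2$. Alternatively, one can note that the statement also holds trivially for $n=1$ with an empty sum and start the induction there.

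For the inductive step, I assume \eqref{1star1} holds at some $n<n_*$. I use \eqref{eq:lemma6} at $n+1$ and multiply the inductive bound by $(1+\tau\alpha_n)(1+\tau\eta_n)\le e^{\tau\alpha_n}e^{\tau\eta_n}$, which is a nonnegative factor. Each term $\xi_0E_{0,n}$ and $\tau\zeta_iE_{i,n}$ then becomes $\xi_0E_{0,n+1}$ and $\tau\zeta_iE_{i,n+1}$, respectively. The lone term $\tau\zeta_n$ becomes $\tau\zeta_nE_{n,n+1}$, which is exactly the $i=n$ summand of the new sum. Adding the fresh $\tau\zeta_{n+1}$ from \eqref{eq:lemma6} completes the bound at $n+1$.

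There is no real obstacle; the argument is bookkeeping. The only care needed is with the indices. The last forcing term $\tau\zeta_n$ carries no exponential factor, and this is precisely why it is separated from the sum in \eqref{1star1}. The other point to watch is that positivity of $\alpha_n,\eta_n$ is what makes multiplying the inequality legitimate and makes $1+y\le e^y$ usable.
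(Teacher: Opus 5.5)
Your proof is correct and is essentially the paper's argument. The paper unrolls \eqref{eq:lemma6} into $\xi_0\prod_{i=0}^{n-1}(1+\tau\eta_i)(1+\tau\alpha_i)+\sum_{i=1}^{n-1}\tau\zeta_i\prod_{j=i}^{n-1}(1+\tau\eta_j)(1+\tau\alpha_j)+\tau\zeta_n$ and applies $1+x\le e^x$ once at the end, while your induction applies it at every step; this is only a bookkeeping difference. One small correction: $1+y\le e^y$ holds for all real $y$, so positivity is used only to justify multiplying inequalities by the nonnegative factor $(1+\tau\alpha_n)(1+\tau\eta_n)$ and by the nonnegative right-hand side of the inductive bound.
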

\begin{proof}
    Using \eqref{eq:lemma6} recursively, we derive
    \[
    \xi_n\leq\xi_0\prod_{i=0}^{n-1}\big[(1+\tau\eta_i)(1+\tau\alpha_i)\big]+\sum_{i=1}^{n-1}\tau\zeta_i\prod_{j=i}^{n-1}\big[(1+\tau\eta_j)(1+\tau\alpha_j)\big]+\tau\zeta_n
    \]
    Using the fact that $1+x\leq e^x$ for all $x\in\mathbb{R}$, the conclusion of the lemma follows.
\end{proof}

%%%%%%%%%%%%%%%%%%%%%%%%%%%%%%%%%%%%%%%%%%%%%%%%%%%%%%%%%%%%%%%%%%%%%%

We are now ready to prove a $V$-bound on $u^n$ for a finite time interval $[0,T]$. For ease of notation, we define the following 
    \begin{align*}
        C_5 &:= \frac{108 K_1 16\theta}{\nu^4(2\theta-1)},\qquad
        C_6 := 3C_5K_2+2C_5^2K_2^2,
        \qquad
        C_7 := 3C_5K_3+4C_5^2K_2K_3\qquad
        C_8 := 2C_5^2K_3^2,
    \end{align*}
    and
    \begin{align}
    \notag
    K_5(\|\nabla u_0\|,\|f\|_\infty,n\tau) := 
    &
     \|\nabla u_0\|^2\exp(C_6)\exp(C_7n\tau)\exp(C_8(n\tau)^2)
     \\
     &
     +n\tau\frac{18}{5\nu}\|f\|_{\infty}^2\exp(C_6)\exp(C_7n\tau)\exp(C_8(n\tau)^2).
     \label{eq:K5}
\end{align}
Also, we denote
\begin{align}
        \kappa_2 &=  \frac{1}{15\frac{108}{\nu^3}K_1(\frac{2}{\nu C_3}\frac{1}{\lambda_1}+\frac{2  }{\lambda_1\nu^2})\|f\|_{\infty}^2}\label{eq:2.38},\\
        \kappa_3 &=\frac{1}{15\frac{108}{\nu^3}K_1(1+C_4/C_3)K_5(\|\nabla u_0\|,\|f\|_{\infty},T)}\label{eq:2.37},\\
        \kappa_4 &= \frac{1}{15\frac{108^2}{\nu^7}K_1^2\frac{1}{\lambda_1} K_5^2(\|\nabla u_0\|,\|f\|_{\infty},T)}.\label{eq:2.38p}
    \end{align}

\begin{proposition}\label{prop2}
    Let $T>0$ and let $K_5(\cdot,\cdot,\cdot)$ be the function defined in \eqref{eq:K5}, which is monotonically increasing in all of its arguments. \\
    Suppose the time step is such that
    \begin{equation}\label{eq:2.36}
    \tau\leq \min\{\kappa_1,\kappa_2(\|f\|_{\infty}),\kappa_3(\|\nabla u_0\|,\|f\|_{\infty},T),\kappa_4(\|\nabla u_0\|,\|f\|_{\infty},T)\}.
    \end{equation}
    Then the bound \eqref{eq:2.22} holds for all $n+1=1,\dots,N:=\lfloor T/\tau\rfloor$ and 
    \begin{align}\label{2star}
        \|\nabla u^n\|^2\leq K_5(\|\nabla u_0\|,\|f\|_{\infty},n\tau) \qquad \forall n=0,\dots,N:=\lfloor T/\tau\rfloor.
    \end{align}
\end{proposition}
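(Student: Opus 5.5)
We argue by strong induction on $n$. The induction hypothesis $P(m)$ is that \eqref{2star} holds for all indices $0,\dots,m$, and that \eqref{eq:2.22} holds for every step $k\to k+1$ with $k+1\le m$. The base case $m=0$ is trivial, since $K_5(\|\nabla u_0\|,\|f\|_\infty,0)\ge\|\nabla u_0\|^2$ (recall $C_6\ge0$).

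For the inductive step, suppose $P(m)$ holds with $m<N$. We first verify the hypothesis \eqref{eq:bound} of Lemma~\ref{lemmaind} at index $n=m$.

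1. Since $m\tau\le T$ and $K_5$ is monotone in its last argument, $\|\nabla u^m\|^2\le K_5(\|\nabla u_0\|,\|f\|_\infty,T)$.
2. Split the left side of \eqref{eq:bound} into its three terms: the forcing term, the $\|\nabla u^m\|^2$ term, and the $\|\nabla u^m\|^4$ term.
3. Bound them using $\tau\le\kappa_2$, $\tau\le\kappa_3$ and $\tau\le\kappa_4$ respectively. By the definitions \eqref{eq:2.38}--\eqref{eq:2.38p}, each term is then at most $\frac1{15}$.

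The sum is therefore at most $\frac15$, so Lemma~\ref{lemmaind} gives \eqref{eq:2.22} for the step $m\to m+1$.

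Next we put \eqref{eq:2.22}, now valid for all $k=0,\dots,m$, into the form \eqref{eq:lemma6}. Set $\xi_k=\|\nabla u^k\|^2$, $\zeta_k=\frac{18}{5\nu}\|f\|_\infty^2$, $\alpha_k=\frac{108}{\nu^3}K_1\|\nabla u^k\|^2$, and $\eta_k=2\alpha_k(1+\tau\alpha_k)$. Lemma~\ref{lemma:DGL} then bounds $\xi_{m+1}$ by products of the exponentials $\exp(\sum\tau\alpha_i)$ and $\exp(\sum\tau\eta_i)$ against $\xi_0$ and against $\tau\sum\zeta_i\le (m+1)\tau\frac{18}{5\nu}\|f\|_\infty^2$. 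This matches the two summands in \eqref{eq:K5}.

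It remains to show that the combined exponent $\sum_{i\le m}\tau(\alpha_i+\eta_i)$ is at most $C_6+C_7(m+1)\tau+C_8((m+1)\tau)^2$. This is where the $L^2(H^1)$ bound \eqref{eq:boundnablaunp1} of Lemma~\ref{lemma:gradu1} enters. It gives
\[
\tau\sum_{i=0}^{m}\|\nabla u^{i}\|^2\le \tau\|\nabla u_0\|^2+\frac{16\theta}{\nu(2\theta-1)}\big(K_2+(m+1)\tau K_3\big).
\]
Multiplying by $\frac{108}{\nu^3}K_1$ gives $\tau\sum\alpha_i\le C_5(K_2+(m+1)\tau K_3)$, up to the harmless $\tau\|\nabla u_0\|^2$ term, which is absorbed since $K_2$ already contains a $\tau\|\nabla u_0\|^2$ piece. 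The $\eta$ sum is
\[
\tau\sum_i\eta_i=2\tau\sum_i\alpha_i+2\tau\sum_i\tau\alpha_i^2 .
\]
For the quadratic part, use $\tau\sum\tau\alpha_i^2\le(\tau\sum\alpha_i)^2$, which holds because all $\alpha_i\ge0$. Then $(C_5(K_2+(m+1)\tau K_3))^2$ expands to $C_5^2K_2^2+2C_5^2K_2K_3(m+1)\tau+C_5^2K_3^2((m+1)\tau)^2$. Collecting, the total exponent is
\[
3C_5K_2+2C_5^2K_2^2+\big(3C_5K_3+4C_5^2K_2K_3\big)(m+1)\tau+2C_5^2K_3^2((m+1)\tau)^2,
\]
which is exactly $C_6+C_7(m+1)\tau+C_8((m+1)\tau)^2$. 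Monotonicity of $K_5$ then yields \eqref{2star} at index $m+1$, which closes the induction.

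The main obstacle is bookkeeping, not any new idea. One must check that the indices line up: Lemma~\ref{lemma:DGL} uses $\alpha_0,\dots,\alpha_m$ and hence $\|\nabla u_0\|^2$, whereas \eqref{eq:boundnablaunp1} controls $\|\nabla u^{i+1}\|$. One must also check that the constants absorb the initial term with precisely the stated $C_6,C_7,C_8$. If a stray $\tau\|\nabla u_0\|^2$ does not fit into $C_6$, we would bound it using $\tau\le\kappa_3$, which forces $\tau\frac{108}{\nu^3}K_1\|\nabla u_0\|^2\le\frac1{15}$ since $K_5\ge\|\nabla u_0\|^2$. This makes the stray term an $O(1)$ correction, at the cost of slightly enlarging $C_6$.
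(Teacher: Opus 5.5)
Your argument follows the paper's proof. It uses the same induction, the same check of \eqref{eq:bound} at step $m$ (three pieces, each at most $\frac1{15}$ via $\kappa_2,\kappa_3,\kappa_4$ and monotonicity of $K_5$), the same $\xi_k,\alpha_k,\eta_k,\zeta_k$ in Lemma~\ref{lemma:DGL}, and the same use of \eqref{eq:boundnablaunp1} with $\sum\tau^2\alpha_i^2\le(\sum\tau\alpha_i)^2$, which produces exactly $C_6,C_7,C_8$.

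The one place you go beyond the paper is the index-$0$ term, and you are right that something is missing there. The bound \eqref{eq:boundnablaunp1} only controls $\tau\sum_{i\ge1}\|\nabla u^i\|^2$. The exponent of the $\xi_0$-term in Lemma~\ref{lemma:DGL}, however, contains $\tau\alpha_0=\tau\frac{108}{\nu^3}K_1\|\nabla u_0\|^2$. The paper's \eqref{march231} passes over this silently.

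Your first justification, that this term is absorbed because $K_2$ already contains a $\tau\|\nabla u_0\|^2$ piece, is wrong. That piece of $K_2$ is the initial value of the telescoped term $\frac{\tau\nu}{8\theta}(4\theta^2-6\theta+3)\|\nabla u^{i}\|^2$ in the proof of Lemma~\ref{lemma:gradu1}, so it is already used up in \eqref{eq:boundnablaunp1}. The only slack in your bound on $\tau\sum_i\alpha_i$ is $C_5\tau K_3$: you wrote $(m+1)\tau K_3$ where $m\tau K_3$ suffices. That slack vanishes when $f=0$, so it cannot absorb $\tau\alpha_0$ in general. With $C_6$ exactly as defined, neither your main line nor the paper's computation closes.

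Your fallback is the correct repair:
\begin{itemize}
\item Since $\tau\le\kappa_3$ and $K_5\ge\|\nabla u_0\|^2$, you get $\tau\alpha_0\le\frac1{15}$.
\item Hence $\tau(\alpha_0+\eta_0)=3\tau\alpha_0+2(\tau\alpha_0)^2\le\frac{47}{225}$.
\item The sums over $i\ge1$, which are the only ones entering the $\zeta$-terms, are bounded by \eqref{eq:boundnablaunp1} with no loss.
\end{itemize}
This gives \eqref{2star} with $C_6$ replaced by $C_6+\frac{47}{225}$. It is consistent once $K_5$, and therefore $\kappa_3,\kappa_4$, are redefined with the new constant, because the estimate on $\tau\alpha_0$ only uses $K_5\ge\|\nabla u_0\|^2$.

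An equivalent repair is to enlarge the $\|\nabla u_0\|^2$ coefficient in $K_2$ to $\frac{\tau\nu}{16\theta}(8\theta^2-10\theta+5)$. Then \eqref{eq:boundnablaunp1} holds with the sum starting at $i=0$, and the paper's computation goes through verbatim.

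Either change is harmless for Theorem~\ref{th:theorem}. You should present it as a modification of the constant, not as a proof of the stated $K_5$.
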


\begin{proof}
    Fix $T>0$ and let $\tau$ satisfy \eqref{eq:2.36}. We will use induction on $n$.
    When  $n=0$,
    \[
    \|\nabla u_0\|^2 \leq \|\nabla u_0\|^2 \exp(C_6) = K_5(\|\nabla u_0\|,\|f\|_{\infty},0).
    \]
    By the time-step assumption \eqref{eq:2.36} and the fact that $K_5(\cdot,\cdot,\cdot)$ is monotonically increasing in all of its arguments, the condition \eqref{eq:bound} is satisfied for $n=0$, hence by Lemma \ref{lemmaind} we have
\begin{confidential}
    {\color{red}
    \begin{align}
    \tau\frac{108}{\nu^3}K_1 &
\Bigg[\left(\frac{2}{\nu C_3}\frac{1}{\lambda_1} +\frac{2}{\nu^2}\frac{1}{\lambda_1} \right)\|f\|^2_{\infty}+\left(1+\frac{C_4}{C_3}\right)\|\nabla u^{0}\|^2
\\
&
+\frac{108}{\nu^4}\frac{1}{\lambda_1} K_1\|\nabla u_0\|^4\Bigg]
\\
&
\leq 
\tau\frac{108}{\nu^3}K_1
\Bigg[\left(\frac{2}{\nu C_3}\frac{1}{\lambda_1} +\frac{2}{\nu^2}\frac{1}{\lambda_1} \right)\|f\|^2_{\infty}+\left(1+\frac{C_4}{C_3}\right)K_5(\|\nabla u_0\|,\|f\|_{\infty},0)
\\
&
+\frac{108}{\nu^4}\frac{1}{\lambda_1} K_1K_5^2(\|\nabla u_0\|,\|f\|_{\infty},0)\Bigg]
\\
&
\leq 
\tau\frac{108}{\nu^3}K_1
\Bigg[\left(\frac{2}{\nu C_3}\frac{1}{\lambda_1} +\frac{2}{\nu^2}\frac{1}{\lambda_1} \right)\|f\|^2_{\infty}+\left(1+\frac{C_4}{C_3}\right)K_5(\|\nabla u_0\|,\|f\|_{\infty},T)
\\
&
+\frac{108}{\nu^4}\frac{1}{\lambda_1} K_1K_5^2(\|\nabla u_0\|,\|f\|_{\infty},T)\Bigg]
\\ &
\leq\frac{1}{15}+\frac{1}{15}+\frac{1}{15}
=\frac{1}{5}
    \end{align}
    \begin{align}
    \tau\frac{108}{\nu^3}K_1 &
\Bigg[\left(\frac{2}{\nu C_3}\frac{1}{\lambda_1} +\frac{2}{\nu^2}\frac{1}{\lambda_1} \right)\|f\|^2_{\infty}+\left(1+\frac{C_4}{C_3}\right)\|\nabla u^{0}\|^2
\\
&
+\frac{108}{\nu^4}\frac{1}{\lambda_1} K_1\|\nabla u_0\|^4\Bigg]
\leq\frac{1}{15}+\frac{1}{15}+\frac{1}{15}
=\frac{1}{5}
    \end{align}
    }
\end{confidential}
\begin{align*} 
    \|\nabla u^{1}\|^2 &\leq  \|\nabla u^{0}\|^2\left(1+\tau\frac{108}{\nu^3}K_1\|\nabla u^{0}\|^2\right)\left[1+2\tau\frac{108}{\nu^3}K_1\left(\|\nabla u^{0}\|^2+\tau\frac{108}{\nu^3}K_1\|\nabla u^{0}\|^4\right)\right] + \frac{18}{5\nu}\tau \|f\|^2_{\infty}.
\end{align*}

Now we assume that \eqref{eq:bound} holds for $n=0,\dots,m-1$ for some $m\leq N$. Then, again by Lemma \ref{lemmaind} we have that \eqref{eq:2.22} holds for $n+1=1,\dots,m$. Furthermore, we bound $\|\nabla u^m\|$ by using the Discrete Gr\"{o}nwall Lemma \ref{lemma:DGL}. 
We write \eqref{eq:2.22} in the form \eqref{eq:lemma6} in Lemma \ref{lemma:DGL}, where
\begin{align*}
    \xi_n = \|\nabla u^{n}\|^2, \qquad 
    \alpha_n = \frac{108}{\nu^3}K_1\|\nabla u^{n}\|^2, \qquad
    \eta_n = 2\frac{108}{\nu^3}K_1\bigg(\|\nabla u^{n}\|^2+\tau\frac{108}{\nu^3}K_1\|\nabla u^{n}\|^4\bigg),\qquad
    \zeta_n = \frac{18}{5\nu} \|f\|^2_{\infty}.
\end{align*}
We now compute the sums that appear in the right-hand side of \eqref{1star1}.
\begin{confidential}
    {\color{red}
    \[
\sum_{i=0}^{n-1}\tau\alpha_i = \sum_{i=0}^{n-1}\tau \frac{108}{\nu^3}K_1\|\nabla u^{i}\|^2 = \frac{108}{\nu^3}K_1 \sum_{i=0}^{n-1}\tau \|\nabla u^{i}\|^2.
\]

Note from \eqref{eq:boundnablaunp1}, we have 

 \begin{align}
    \frac{\tau\nu(2\theta-1)}{16\theta}\sum_{n=0}^{N-1}\|\nabla u^{n+1}\|^2
    \leq
    K_2 + N\tau K_3
    \end{align}

So

\[
\frac{108}{\nu^3}K_1 \sum_{n=0}^{N-1}\tau \|\nabla u^{n}\|^2 \leq \frac{108}{\nu^3}K_1\frac{16\theta}{\nu(2\theta-1)}(K_2+N\tau K_3)
\]
    }
\end{confidential}

Using the $L^2(H^1)$ bound \eqref{eq:boundnablaunp1} in Lemma \ref{lemma:gradu1}, we have
\begin{align}\label{march231}
    \sum_{i=0}^{n-1}\tau\alpha_i =  \frac{108}{\nu^3}K_1 \sum_{i=0}^{n-1}\tau \|\nabla u^{i}\|^2\leq \frac{108}{\nu^3}K_1\frac{16\theta}{\nu(2\theta-1)}(K_2+N\tau K_3)
\end{align}
and
\begin{confidential}
    {\color{red}
    \begin{align*}
    \sum_{j=i}^{n-1}\tau \alpha_j & 
    =
    \sum_{j=i}^{n-1}\tau\frac{108}{\nu^3}K_1\|\nabla u^{j}\|^2
    \\
    &
    = 
     \frac{108}{\nu^3}K_1\sum_{j=i}^{n-1}\tau\|\nabla u^{j}\|^2
     \\
     &
     \leq
     \frac{108}{\nu^3}K_1\sum_{j=0}^{n-1}\tau\|\nabla u^{j}\|^2
     \\
     &
     \leq
     \frac{108}{\nu^3}K_1\frac{16\theta}{\nu(2\theta-1)}(K_2+n\tau K_3)
\end{align*}
    }
\end{confidential}
\begin{align}\label{march232}
    \sum_{j=i}^{n-1}\tau \alpha_j & 
    =
     \frac{108}{\nu^3}K_1\sum_{j=i}^{n-1}\tau\|\nabla u^{j}\|^2
     \leq
     \frac{108}{\nu^3}K_1\sum_{j=0}^{n-1}\tau\|\nabla u^{j}\|^2
     \leq
     \frac{108}{\nu^3}K_1\frac{16\theta}{\nu(2\theta-1)}(K_2+n\tau K_3).
\end{align}

Now we use the fact that if $a_i\geq0$, $\sum_{i=0}^{n-1}a_i^2\leq \left(\sum_{i=0}^{n-1}a_i\right)^2$ and the bounds \eqref{march231} and \eqref{march232} to get
    \begin{align*}
    \sum_{i=0}^{n-1} \tau \eta_i
     &
     = 
     \sum_{i=0}^{n-1} \tau 2\frac{108}{\nu^3}K_1\left(\|\nabla u^{i}\|^2+\tau\frac{108}{\nu^3}K_1\|\nabla u^{i}\|^4\right)
     \\
     &
     =
     \sum_{i=0}^{n-1} \tau 2\frac{108}{\nu^3}K_1\|\nabla u^{i}\|^2
     +2\sum_{i=0}^{n-1}\left(\tau\frac{108}{\nu^3}K_1\|\nabla u^{i}\|^2\right)^2
     \\
     &
     \leq
     2\frac{108}{\nu^3}K_1\sum_{i=0}^{n-1} \tau \|\nabla u^{i}\|^2
     +2\left(\sum_{i=0}^{n-1}\tau\alpha_i\right)^2
     \\
     &
     = 
     2\sum_{i=0}^{n-1} \tau \alpha_i
     +2\left(\sum_{i=0}^{n-1}\tau\alpha_i\right)^2
     \\
     &
     \leq
     2 C_5(K_2+n\tau K_3) + 2(C_5(K_2+n\tau K_3))^2
     \\
     &
     =
     2C_5K_2+2C_5n\tau K_3+2C_5^2K_2^2+2C_5^2K_3^2(n\tau)^2+4C_5^2K_2K_3n\tau,
\end{align*}
\begin{confidential}
    {\color{red}
\begin{align*}
    \sum_{i=0}^{n-1} \tau \eta_i
     &
     =
     \sum_{i=0}^{n-1} \tau 2\frac{108}{\nu^3}K_1\|\nabla u^{i}\|^2
     +2\sum_{i=0}^{n-1}\left(\tau\frac{108}{\nu^3}K_1\|\nabla u^{i}\|^2\right)^2
     \\
     &
     \leq
     2\frac{108}{\nu^3}K_1\sum_{i=0}^{n-1} \tau \|\nabla u^{i}\|^2
     +2\left(\sum_{i=0}^{n-1}\tau\alpha_i\right)^2
     \\
     &
     \leq
     2 C_5(K_2+n\tau K_3) + 2(C_5(K_2+n\tau K_3))^2
     \\
     &
     =
     2C_5K_2+2C_5n\tau K_3+2C_5^2K_2^2+2C_5^2K_3^2(n\tau)^2+4C_5^2K_2K_3n\tau
\end{align*}
    }
\end{confidential}
and similarly,
\begin{align*}
    \sum_{j=i}^{n-1} \tau \eta_j & 
    \leq \sum_{j=0}^{n-1} \tau \eta_i
    \leq
    2C_5K_2+2C_5n\tau K_3+2C_5^2K_2^2+2C_5^2K_3^2(n\tau)^2+4C_5^2K_2K_3n\tau.
\end{align*}

Finally, we can apply the Discrete Gr\"{o}nwall Lemma \ref{lemma:DGL} to get the desired bound \eqref{2star} on $m$
\begin{confidential}
    {\color{red}
    \begin{align}
    \|\nabla u^m\|^2 
    &
    \leq
    \|\nabla u_0\|^2 \exp(C_5K_2+C_5K_3m\tau+2C_5K_2+2C_5K_3m\tau +2C_5^2K_2^2+2C_5^2K_3^2(m\tau)^2+4C_5^2K_2K_3m\tau)
    \\
    &
    +\sum_{i=1}^{m-1}\tau\frac{18}{5\nu}\|f\|_{\infty}^2\exp(C_5K_2+C_5K_3m\tau+2C_5K_2+2C_5K_3m\tau +2C_5^2K_2^2+2C_5^2K_3^2(m\tau)^2+4C_5^2K_2K_3m\tau)
    \\
    &
    +\tau\frac{18}{5\nu}\|f\|_{\infty}^2
    \\
    &
    \leq
     \|\nabla u_0\|^2\exp(C_6)\exp(C_7m\tau)\exp(C_8(m\tau)^2)
     \\
     &
     +m\tau\frac{18}{5\nu}\|f\|_{\infty}^2\exp(C_6)\exp(C_7m\tau)\exp(C_8(m\tau)^2)
     \\
     & 
     := K_5(\|\nabla u_0\|,\|f\|_\infty,m\tau) 
\end{align}
    }
\end{confidential}
\begin{align}\label{march233}
    \|\nabla u^m\|^2 
    &
    \leq
     K_5(\|\nabla u_0\|,\|f\|_\infty,m\tau).
\end{align}

We note that the bound $K_5$ in \eqref{march233} depends on the initial discrete value through $\|\nabla u_0\|$ and on $m$, but the dependence on $m$ is only through the time $m\tau$. So as long as $m\leq N = \lfloor T/\tau\rfloor$, the assumption \eqref{eq:bound} is satisfied, and the bound \eqref{eq:2.22} follows from Lemma \ref{lemmaind}.
\begin{confidential}
    {\color{red}
    \begin{align}
    \tau\frac{108}{\nu^3}K_1 &
\left[\left(\frac{2}{\nu C_3} +\frac{2}{\nu^2}\frac{1}{\lambda_1} \right)\|f\|^2_{\infty}+\left(1+\frac{C_4}{C_3}\right)\|\nabla u^{m}\|^2+\frac{108}{\nu^4}\frac{1}{\lambda_1} K_1\|\nabla u^m\|^4\right]
\\
&
\leq 
\frac{1}{15}
+\tau\frac{108}{\nu^3}K_1\left(1+\frac{C_4}{C_3}\right)K_5(\|\nabla u_0\|,\|f\|_{\infty},T)
+
\tau\frac{108}{\nu^3}K_1\frac{108}{\nu^4}\frac{1}{\lambda_1} K_1K_5^2(\|\nabla u_0\|,\|f\|_{\infty},T)
\\
&
\leq 
\frac{1}{15}+\frac{1}{15}+\frac{1}{15}=\frac{1}{5}
    \end{align}
    }
\end{confidential}
\end{proof}

For the reader's convenience, we now recall a result similar to Discrete Uniform Gr\"{o}nwall Lemma in \cite{MR2217369}.
\begin{lemma}[Discrete Uniform Gr\"{o}nwall Lemma]\label{lemma:DUGL}
        Given $\tau>0$, $n_1,n_2,n_*\in\mathbb{N}$ such that $n_1<n_*$ and $n_1+n_2+1\leq n_*$, given positive sequences $\xi_n,\eta_n,\zeta_n,\alpha_n$ such that
    \begin{equation}\label{eq:2.48}
        \xi_n\leq\xi_{n-1}(1+\tau\alpha_{n-1})(1+\tau\eta_{n-1})+\tau\zeta_n \qquad \text{for} \quad n=n_1,\dots n_* ,
    \end{equation}
    and given the bounds
    \begin{align*}
        \sum_{n=n'}^{n'+n_2}\tau\eta_n  \leq a_1(n_1,n_*),\qquad
        \sum_{n=n'}^{n'+n_2}\tau\zeta_n  \leq a_2(n_1,n_*),\qquad
        \sum_{n=n'}^{n'+n_2}\tau\xi_n  \leq a_3(n_1,n_*),\qquad
        \sum_{n=n'}^{n'+n_2}\tau\alpha_n  \leq a_4(n_1,n_*),
    \end{align*}
    for any $n'$ satisfying $n_1\leq n' \leq n_*-n_2$, we have
    \[
    \xi_n\leq \bigg(\frac{a_3(n_1,n_*)}{\tau n_2}+a_2(n_1,n_*)\bigg)e^{a_1(n_1,n_*)}e^{a_4(n_1,n_*)}
    \]
    for any $n$ such that $n_1+n_2+1\leq n\leq n_*.$
\end{lemma}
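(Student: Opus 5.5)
The plan is to follow the standard proof of the uniform Gr\"{o}nwall lemma: average the iterated version of \eqref{eq:2.48} over a window of starting indices. Fix $n$ with $n_1+n_2+1\leq n\leq n_*$ and choose any $k$ in the window $n-n_2-1\leq k\leq n-1$. Note that $k\geq n_1$, so the recursion \eqref{eq:2.48} is available for every index from $k+1$ to $n$.

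\emph{Iteration from $k$.} I will apply \eqref{eq:2.48} repeatedly from index $k+1$ up to $n$, exactly as in the proof of Lemma \ref{lemma:DGL}. Using $1+x\leq e^x$, this gives
\begin{align*}
\xi_n\leq \xi_k\exp\Big(\sum_{j=k}^{n-1}\tau\eta_j\Big)\exp\Big(\sum_{j=k}^{n-1}\tau\alpha_j\Big)+\sum_{i=k+1}^{n}\tau\zeta_i\exp\Big(\sum_{j=i}^{n-1}\tau\eta_j\Big)\exp\Big(\sum_{j=i}^{n-1}\tau\alpha_j\Big).
\end{align*}
All index ranges that appear lie inside the window $[n-n_2-1,n]$, which has $n_2+2$ points. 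Each range in the exponents therefore has at most $n_2+1$ terms and is contained in a range of the form $[n',n'+n_2]$. I must check that such an $n'$ can be chosen with $n_1\leq n'\leq n_*-n_2$. Taking $n'=n-n_2-1$ covers indices up to $n-1$; this is admissible because $n'\geq n_1$ and $n'+n_2=n-1\leq n_*$. Taking $n'=n-n_2$ covers the $\zeta$-sum over $[k+1,n]\subset[n-n_2,n]$. Positivity of the sequences then lets me replace each partial sum by the full window sum. Hence the exponents are bounded by $a_1$ and $a_4$, the $\zeta$-sum is bounded by $a_2$, and I obtain
\[
\xi_n\leq(\xi_k+a_2)\,e^{a_1}e^{a_4}.
\]

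\emph{Averaging.} I will multiply this inequality by $\tau$ and sum over $k$ in a block of $n_2$ consecutive indices, say $k=n-n_2,\dots,n-1$. This block lies inside an admissible range $[n',n'+n_2]$, so $\sum_k\tau\xi_k\leq a_3$. The result is
\[
n_2\tau\,\xi_n\leq (a_3+n_2\tau a_2)e^{a_1}e^{a_4}.
\]
Dividing by $n_2\tau$ gives exactly the claimed bound.

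The main obstacle is bookkeeping rather than analysis. I have to make sure that every index range used (for $\xi$, for $\zeta$, and inside the exponentials) sits inside a window $[n',n'+n_2]$ with $n'$ admissible. I also have to check the edge case where the block of $k$ reaches down to $n_1$, which is where the hypothesis $n\geq n_1+n_2+1$ is needed. If one fixed window cannot serve all three sums at once, I will bound each sum separately, using its own admissible window, by the corresponding $a_i$.
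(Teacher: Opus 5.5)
Your proposal is correct and follows essentially the same argument as the paper. You iterate the recursion from a starting index $k\in[n-n_2-1,n-1]$, use $1+x\le e^x$ and positivity to get $\xi_n\le(\xi_k+a_2)e^{a_1}e^{a_4}$, and then multiply by $\tau$ and sum over $n_2$ consecutive values of $k$ to bring in $a_3$. The only difference is cosmetic: the paper averages over $k=n-n_2-1,\dots,n-2$ rather than $k=n-n_2,\dots,n-1$. Your explicit check that each window $[n',n'+n_2]$ you use has $n_1\le n'\le n_*-n_2$ is more careful than the paper's bookkeeping.
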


\begin{proof}
    Let $n_3,n_4\in\mathbb{N}$ be such that $n_1\leq n_3-1\leq n_4\leq n_2+n_3-1\leq n_*-1.$ Using \eqref{eq:2.48} recursively, we derive
\begin{confidential}
        {\color{red}
         \begin{align*}
        \xi_{n_2+n_3}&\leq 
        \xi_{n_4}\prod_{i=n_4}^{n_2+n_3-1}(1+\tau\eta_i)(1+\tau\alpha_i)+\sum_{i=n_4+1}^{n_2+n_3-1}\tau\zeta_i\prod_{j=i}^{n_2+n_3-1}(1+\tau\eta_j)(1+\tau\alpha_j)+\tau\zeta_{n_2+n_3}\\
        &\text{(Using the fact that $1+x\leq e^x$ for all $x\in\mathbb{R}$})\\
        & 
        \leq
        \xi_{n_4}\prod_{i=n_4}^{n_2+n_3-1}e^{\tau\eta_i}e^{\tau\alpha_i}+\sum_{i=n_4+1}^{n_2+n_3-1}\tau\zeta_i\prod_{j=i}^{n_2+n_3-1}e^{\tau\alpha_i}e^{\tau\eta_i}+\tau\zeta_{n_2+n_3}\\
        &
        \leq
        \xi_{n_4}e^{\sum_{i=n_4}^{n_2+n_3-1}\tau\eta_i}e^{\sum_{i=n_4}^{n_2+n_3-1}\tau\alpha_i}+\sum_{i=n_4+1}^{n_2+n_3-1}\tau\zeta_ie^{\sum_{i=n_4}^{n_2+n_3-1}\tau\eta_i}e^{\sum_{i=n_4}^{n_2+n_3-1}\tau\alpha_i}+\tau\zeta_{n_2+n_3}
    \end{align*}
    Note that by the assumptions of the lemma,
    \begin{align*}
        &\sum_{i=n_4}^{n_2+n_3-1}\tau\eta_i\leq\sum_{i=n_4}^{n_4+n_2}\tau\eta_i\leq a_1(n_1,n_*)\\
        &\sum_{i=n_4}^{n_2+n_3-1}\tau\alpha_i\leq\sum_{i=n_4}^{n_4+n_2}\tau\alpha_i\leq a_4(n_1,n_*)\\
        &\sum_{i=n_4}^{n_2+n_3-1}\tau\zeta_i\leq\sum_{i=n_4}^{n_4+n_2}\tau\zeta_i\leq a_2(n_1,n_*)\\
    \end{align*}
        }
\end{confidential}
    \begin{align*}
        \xi_{n_2+n_3}
        \leq
        \xi_{n_4}e^{\sum_{i=n_4}^{n_2+n_3-1}\tau\eta_i}e^{\sum_{i=n_4}^{n_2+n_3-1}\tau\alpha_i}+\sum_{i=n_4+1}^{n_2+n_3-1}\tau\zeta_ie^{\sum_{i=n_4}^{n_2+n_3-1}\tau\eta_i}e^{\sum_{i=n_4}^{n_2+n_3-1}\tau\alpha_i}+\tau\zeta_{n_2+n_3},
    \end{align*}
    so
    \[
     \xi_{n_2+n_3}\leq \xi_{n_4}e^{a_1}e^{a_4}+a_2e^{a_1}e^{a_4}=(\xi_{n_4}+a_2)e^{a_1}e^{a_4}.
    \]
    Multiplying this equation by $\tau$, summing from $n_4=n_3-1$ to $n_2+n_3-2$, and using the assumptions concludes the argument. 
\begin{confidential}
        {\color{red}
        \begin{align*}
        &\tau\xi_{n_2+n_3}\leq \tau(\xi_{n_4}+a_2)e^{a_1}e^{a_4}\\
        & \sum_{n_4=n_3-1}^{n_2+n_3-2}\tau\xi_{n_2+n_3}\leq \bigg(\sum_{n_4=n_3-1}^{n_2+n_3-2}\tau\xi_{n_4}+\sum_{n_4=n_3-1}^{n_2+n_3-2}\tau a_2\bigg)e^{a_1}e^{a_4}\\
        & \tau\xi_{n_2+n_3}n_2\leq \big(a_3+n_2\tau a_2\big)e^{a_1}e^{a_4}\\
         & \xi_{n_2+n_3}\leq \bigg(\frac{a_3}{\tau n_2}+ a_2\bigg)e^{a_1}e^{a_4}
    \end{align*}
    And since $n_1\leq n_3-1\leq n_4\leq n_2+n_3-1\leq n_*-1$, then if n is such that $n_1+n_2+1\leq n\leq n_*$, for any $n_2+n_3$ we can write $n$ in terms of $n_2+n_3$ (we can notice that by adding $n_2+1$ to the first chain of inequalities in this paragraph to get $n_1+n_2+1\leq n_3+n_2$ and adding 1 to get $n_2+n_3\leq n_*$).\\
    So
        }
\end{confidential}  
\end{proof}

We are now ready to give the main result, which is the long-time bound on $\|\nabla u^n\|$. We define the following constants
\begin{align}
    C_9 := 4K_4\frac{108}{\nu^3}\frac{16\theta}{\nu(2\theta-1)},\qquad
    C_{10} := Q_{4}\frac{108}{\nu^3}\frac{16\theta}{\nu(2\theta-1)},
    \qquad
    C_{11} := \frac{108}{\nu^3}\frac{2}{(2\theta-1)}(4\theta^2-6\theta+3).
\end{align}
Also,
\begin{align}\label{2starmar27}
    \rho_0= \frac{1 }{\lambda_1\nu^2(2\theta-1)}\|f\|_{\infty}^2,\qquad T_0=15\frac{1}{\lambda_1} \frac{1}{\nu(2\theta-1)}\ln\left(\frac{\|u_0\|^2}{\rho_0}\right),
\end{align}
and, for some fixed $r$, we let $\rho_1$ be such that 
\begin{align}\label{4star}
    \rho_1^2(&\|f\|_{\infty};r) :=
    \left(\frac{160\theta\rho_0K_4}{\nu r (2\theta-1)}+\frac{40\theta K_3}{\nu(2\theta-1)}+\frac{\nu^3(4\theta^2-6\theta+3)}{324(2\theta-1)r\rho_0}+r \frac{18}{5\nu} \|f\|^2_{\infty}\right)
    \\
    &
    \notag
    \times\exp\Bigg(2C_9 \rho_0^2 +2C_{10}\rho_0r+\frac{4}{15(2\theta-1)}(4\theta^2-6\theta+3)
    +
    8 C_9^2\rho_0^4 + 8 C_{10}^2\rho_0^2r^2
    +\frac{16}{15^2(2\theta-1)^2}(4\theta^2-6\theta+3)^2\Bigg)
    \\
    &
    \notag
    \times \exp \left(C_9 \rho_0^2 +C_{10}\rho_0r+\frac{2}{15(2\theta-1)}(4\theta^2-6\theta+3)\right).
\end{align}
We also let
\begin{align}\label{eq:K62}
    K_6(\|f\|_{\infty}):= K_5(\rho_1,\|f\|_{\infty},r) 
\end{align}
and
\begin{align}\label{eq:K72}
K_7(\|\nabla u_0\|, \|f\|_{\infty}) :=
     \max\{K_5(\|\nabla u_0\|, \|f\|_{\infty}, T_0+r), K_6(\|f\|_{\infty})\}.
\end{align}

\begin{theorem}\label{th:theorem}
    Let $u_0\in V$, $f\in L^{\infty}(\mathbb{R}_+;H)$, $u^n$ be the solution to \eqref{eq:BENSE}-\eqref{eq:FENSE}. Let $r\geq 4\kappa_1$ be arbitrarily fixed and let the time step $\tau$ be small, such that 
    \begin{align}\label{taubound}
    \tau\leq \min\{\kappa_1,\kappa_2(\|f\|_{\infty}),\kappa_3(\|\nabla u_0\|,\|f\|_{\infty},T_0+r),\kappa_4(\|\nabla u_0\|,\|f\|_{\infty},T_0+r),\kappa_3(\rho_1,\|f\|_{\infty},r),\kappa_4(\rho_1,\|f\|_{\infty},r)\}.
    \end{align}
    Then we have 
    \[
    \|\nabla u^n\|^2 \leq K_7(\|\nabla u_0\|,\|f\|_{\infty})
    \]
    for all $n\geq 0$, where $K_{7}(\cdot,\cdot)$ (defined in \eqref{eq:K72}) is a continuous function defined on $\mathbb{R}_+^2$, increasing in both arguments. Moreover, for $K_6(\|f\|_{\infty})$ as given in \eqref{eq:K62} we have
    \[
    \|\nabla u^n\|^2\leq K_6(\|f\|_{\infty})
    \]
    for all $n\geq N_0+N_r := \lfloor T_0/\tau\rfloor + \lfloor r/\tau\rfloor$, i.e., $\|\nabla u^n\|$ is bounded independently of $u_0$ beyond $N_0+N_r$.
\end{theorem}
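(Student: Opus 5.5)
The proof will follow the Tone--Wirosoetisno strategy in three stages. First comes a finite-time bound up to $T_0+r$. Second, the $H$-absorbing ball of Corollary~\ref{cor:absorbing} combined with the discrete uniform Gr\"onwall Lemma~\ref{lemma:DUGL} will show that $\|\nabla u^n\|^2\le\rho_1^2$ on a window of length $r$ after $T_0$. Third, Proposition~\ref{prop2} will be restarted from each such point to propagate the bound for all later times.

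\emph{Step 1 (initial segment).} Since $\tau\le\min\{\kappa_1,\kappa_2,\kappa_3(\|\nabla u_0\|,\|f\|_\infty,T_0+r),\kappa_4(\cdots,T_0+r)\}$, Proposition~\ref{prop2} with $T=T_0+r$ gives two conclusions for $n\le N_0+N_r$: the one-step bound \eqref{eq:2.22} holds, and $\|\nabla u^n\|^2\le K_5(\|\nabla u_0\|,\|f\|_\infty,T_0+r)\le K_7$.

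\emph{Step 2 (absorption in $V$).} For $n\ge N_0$, Corollary~\ref{cor:absorbing} gives $\|u^n\|^2\le 4\rho_0$. I will then apply Lemma~\ref{lemma:DUGL} with $n_1=N_0$, $n_2\approx N_r/2$ (or $N_r$), and the sequences $\xi_n=\|\nabla u^n\|^2$, $\alpha_n$, $\eta_n$, $\zeta_n$ taken from the proof of Proposition~\ref{prop2}. The window sums come from \eqref{eq:boundnablaunp1.1} with $p=n_2$:
\[
\tau\sum_{i=n'}^{n'+n_2}\|\nabla u^{i}\|^2\le \frac{16\theta}{\nu(2\theta-1)}\Big(16K_4\rho_0+K_3 r+\tfrac{\tau\nu}{8\theta}(4\theta^2-6\theta+3)\|\nabla u^{n'}\|^2\Big).
\]
The troublesome term is the last one, $\tau\|\nabla u^{n'}\|^2$. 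I will control it using \eqref{eq:energy5}-type bounds or, more simply, the fact that $\tau\|\nabla u^{n'}\|^2$ is one term of the previous window sum; alternatively, the condition $\tau\le\kappa_1$ together with $r\ge4\kappa_1$ lets it be absorbed, which produces the $(4\theta^2-6\theta+3)/(15(2\theta-1))$ terms visible in \eqref{4star}. These bounds give $a_3$ and $a_4=\frac{108}{\nu^3}K_1\cdot(\text{window sum})$, which yields the $C_9\rho_0^2$ and $C_{10}\rho_0 r$ contributions. They also give $a_1\le 2a_4+2a_4^2$, using $\sum a_i^2\le(\sum a_i)^2$ as in Proposition~\ref{prop2}, and $a_2=r\frac{18}{5\nu}\|f\|_\infty^2$. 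Lemma~\ref{lemma:DUGL} then gives $\xi_n\le(a_3/(\tau n_2)+a_2)e^{a_1+a_4}$, and since $\tau n_2\approx r/2$ this bound matches \eqref{4star}. The conclusion is $\|\nabla u^n\|^2\le\rho_1^2$ for $N_0+n_2+1\le n\le N_0+N_r$.

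\emph{Step 3 (propagation).} Suppose $m$ is any index with $\|\nabla u^m\|^2\le\rho_1^2$. Since $\tau\le\kappa_3(\rho_1,\|f\|_\infty,r)$ and $\tau\le\kappa_4(\rho_1,\|f\|_\infty,r)$, Proposition~\ref{prop2} applied with initial datum $u^m$ holds, because the scheme is autonomous apart from $f$ and $K_1$ still bounds $\|u^n\|^2$. It gives $\|\nabla u^{m+k}\|^2\le K_5(\rho_1,\|f\|_\infty,r)=K_6$ for $k\le N_r$. It also gives \eqref{eq:2.22} on that range, so Step 2 can be redone on the next window and returns $\|\nabla u^n\|^2\le\rho_1^2$ again for a later block of indices. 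Induction over consecutive windows of length about $r/2$ therefore covers all $n\ge N_0+N_r$ with the bound $K_6$, and together with Step 1 this gives $K_7$ for all $n$.

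The main obstacle is Step 2. The window sums must be uniform in $n'$, yet the bound \eqref{eq:boundnablaunp1.1} involves $\|\nabla u^{n'}\|^2$, which is not a priori controlled. In addition, the overlapping windows must be arranged so that the $\rho_1$ region is reached before the $K_5$ control from Step 1, or from the previous restart, expires. This is where the condition $r\ge4\kappa_1$ and the precise choice of $n_2$ relative to $N_r$ have to be tuned.
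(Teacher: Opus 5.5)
Your architecture (finite-time bound up to $T_0+r$, discrete uniform Gr\"onwall on a window after $T_0$, restart of Proposition~\ref{prop2} from the absorbed point) is the paper's. However, the step you call the main obstacle is left open, and none of your three proposed mechanisms works as stated.

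The term $\frac{2(4\theta^2-6\theta+3)}{2\theta-1}\,\tau\|\nabla u^{n'}\|^2$ in the window bound cannot be absorbed using $\tau\le\kappa_1$ and $r\ge 4\kappa_1$. The quantity $\kappa_1=1/(\lambda_1\nu)$ carries no information about a gradient norm. In the paper these two conditions only guarantee $N_r\ge 4$ and $\frac{1}{\tau(N_r-2)}\le\frac{5}{2r}$. The factor $\frac{1}{15}$ in \eqref{4star} does not come from $\kappa_1$ at all.

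Bounding the term by the previous window sum is circular and does not contract. With $s=2\theta-1\in(0,1)$ its coefficient equals $2(s-1+1/s)>2$, so iterating over windows amplifies rather than damps. Finally, \eqref{eq:energy5} controls $\nu\tau\|\nabla u^{n+\theta}\|^2$ at fractional levels, not $\tau\|\nabla u^{n'}\|^2$ at integer levels, and you give no conversion.

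The missing idea is already in your Step 1. For $n'\le N_0+N_r$, the finite-time bound \eqref{2star} gives $\|\nabla u^{n'}\|^2\le K_5(\|\nabla u_0\|,\|f\|_\infty,T_0+r)$. The entry $\kappa_3(\|\nabla u_0\|,\|f\|_\infty,T_0+r)$ of \eqref{taubound}, whose definition \eqref{eq:2.37} carries the factor $15$, then yields $\tau\frac{108}{\nu^3}\rho_0 K_5(\|\nabla u_0\|,\|f\|_\infty,T_0+r)\le\frac{1}{15}$.

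This is what makes the window sums independent of $u_0$. It produces the term $\frac{2(4\theta^2-6\theta+3)}{15(2\theta-1)}$ in $a_4$, its square in $a_1$, and the term $\frac{\nu^3(4\theta^2-6\theta+3)}{324(2\theta-1)r\rho_0}$ in \eqref{4star}. In the restart you need the same device with $K_6=K_5(\rho_1,\|f\|_\infty,r)$ together with $\kappa_3(\rho_1,\|f\|_\infty,r)$ and $\kappa_4(\rho_1,\|f\|_\infty,r)$. That is why these entries appear in \eqref{taubound}, not merely to make Proposition~\ref{prop2} applicable.

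Once this is in place, no tuning of overlapping windows is needed. The paper takes $n_1=N_0+1$, $n_2=N_r-2$, $n_*=N_0+N_r$. It uses $N_0+1$ rather than $N_0$ because $N_0\tau\le T_0$, while Corollary~\ref{cor:absorbing} needs $n\tau\ge T_0$. With this choice only $n'\in\{N_0+1,N_0+2\}$ enter, the bound $\rho_1^2$ is obtained at the single index $N_0+N_r$, and that index is the next restart point.

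Your choice $n_2\approx N_r/2$ changes the factor $\frac{5}{2r}$ in front of $a_3$; for $n_2=\lfloor N_r/2\rfloor$ it can approach $\frac{3}{r}$. You would then not land on the specific $\rho_1$ of \eqref{4star} with which the hypothesis \eqref{taubound} is stated.
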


\begin{proof}
    Let $r\geq 4\kappa_1$ and $\tau$ be such that assumption \eqref{taubound} holds. 

    First, we recall that from the uniform bound \eqref{eq:energy6} and \eqref{2starmar27}, provided $n\tau\geq T_0$,  we have
    \[
    \|u^{n}\|^2\leq 4\rho_0.
    \]
    By the small time step hypothesis \eqref{taubound}, $\tau$ satisfies also \eqref{eq:2.36} with $T=T_0+r$. Then, by Proposition \ref{prop2}, we get that \eqref{eq:2.22} holds for all $n+1=1,\dots,N_0+N_r$, and
    \[
    \|\nabla u^n\|^2\leq K_5(\|\nabla u_0\|,\|f\|_{\infty},n\tau)
    \]
    for all $n=1,\dots,N_0+N_r$. So for $\tau$ satisfying \eqref{taubound},
    \begin{align} 
    \|\nabla u^{n+1}\|^2 &\leq  \|\nabla u^{n}\|^2\big(1+\tau\frac{108}{\nu^3}K_1\|\nabla u^{n}\|^2\big)\bigg[1+2\tau\frac{108}{\nu^3}K_1\left(\|\nabla u^{n}\|^2+\tau\frac{108}{\nu^3}K_1\|\nabla u^{n}\|^4\right)\bigg] + \frac{18}{5\nu}\tau \|f\|^2_{\infty}
    \end{align}
for all $n+1=1,\dots,N_0+N_r$. 

We now apply the discrete uniform Gr\"{o}nwall Lemma \ref{lemma:DUGL} with 
\begin{align*}
    \xi_n = \|\nabla u^{n}\|^2,\qquad
    \alpha_n = \frac{108}{\nu^3}K_1\|\nabla u^{n}\|^2,\qquad
    \eta_n = 2\frac{108}{\nu^3}K_1\left(\|\nabla u^{n}\|^2+\tau\frac{108}{\nu^3}K_1\|\nabla u^{n}\|^4\right),\qquad
    \zeta_n = \frac{18}{5\nu} \|f\|^2_{\infty},
\end{align*}
$n_1=N_0+1$, $n_2 = N_r-2$, $n_*=N_0+N_r$.

We note that, since the sums $a_1(n_1,n_*)$, $a_2(n_1,n_*)$, $a_3(n_1,n_*)$, $a_4(n_1,n_*)$ are taken for $n\geq N_0+1$ and since $\tau\leq \kappa_1$, we can replace $K_1$ by $4\rho_0$ whenever the former appears. 

For every $n'=N_0+1,N_0+2$, we compute the sums required in the discrete uniform Gr\"{o}nwall Lemma \ref{lemma:DUGL}. We use the fact that
\[
\tau(n_2+1)=\tau(N_r-1) = \tau (\lfloor r/\tau\rfloor-1)\leq r.
\]
First, we note that
    \begin{align}\label{sum1}
    \sum_{n = n'}^{n'+n_2}\tau \zeta_n 
    = 
    \sum_{n = n'}^{n'+n_2} \tau \frac{18}{5\nu} \|f\|^2_{\infty}
    =
    \tau \frac{18}{5\nu} \|f\|^2_{\infty}(n_2+1)
    \leq
    r \frac{18}{5\nu} \|f\|^2_{\infty}.
\end{align}
\begin{confidential}
    {\color{red}
    \begin{align}
    \sum_{n = n'}^{n'+n_2}\tau \zeta_n &
    = 
    \sum_{n = n'}^{n'+n_2} \tau \frac{18}{5\nu} \|f\|^2_{\infty}
    \\
    &
    =
    \tau \frac{18}{5\nu} \|f\|^2_{\infty}\sum_{n = n'}^{n'+n_2}1
    \\
    &
    =
    \tau \frac{18}{5\nu} \|f\|^2_{\infty}(n_2+1)
    \\
    &
    =
    \tau \frac{18}{5\nu} \|f\|^2_{\infty}(N_r-1)
    \\
    &
    \leq
    \tau \frac{18}{5\nu} \|f\|^2_{\infty}\left\lfloor \frac{r}{\tau} \right\rfloor
    \\
    &
    \leq
    r \frac{18}{5\nu} \|f\|^2_{\infty}
\end{align}
    }
\end{confidential}
Secondly, using the $L^2(H^1)$ bound \eqref{eq:boundnablaunp1.1}, and the finite-time bound \eqref{2star}, we get
\begin{align}\label{sum2}
   \sum_{n = n'}^{n'+n_2} \tau \xi_n 
   &
   =
    \sum_{n = n'}^{n'+n_2} \tau\|\nabla u^{n}\|^2
    \\
    &
    \notag
    \leq
    4K_4 \frac{16\theta}{\nu(2\theta-1)}\rho_0 + \frac{16\theta}{\nu(2\theta-1)}K_3(n_2+1)\tau + \frac{16\theta}{\nu(2\theta-1)}\frac{\tau\nu}{8\theta}(4\theta^2-6\theta+3)\|\nabla u^{n'}\|^2
    \\
    &
    \notag
    \leq
    4K_4 \frac{16\theta}{\nu(2\theta-1)}\rho_0 + \frac{16\theta}{\nu(2\theta-1)}K_3r + \frac{16\theta}{\nu(2\theta-1)}\frac{\tau\nu}{8\theta}(4\theta^2-6\theta+3)K_5(\|\nabla u_0\|,\|f\|_{\infty},T_0+r).
\end{align}
\begin{confidential}
    {\color{red}
We have to justify why, in the last inequality, we can bound $\|\nabla u^{n'}\|^2$ by $K_5(\|\nabla u_0\|,\|f\|_{\infty},T_0+r)$. From the assumption \eqref{taubound} of the theorem, assumption \eqref{eq:2.36} of \eqref{prop2} holds for $T=T_0+r$. Therefore, $\|\nabla u^n\|^2\leq K_5(\|\nabla u_0\|,\|f\|_{\infty},n\tau)$ for all $n=1,\dots,N:=\lfloor (T_0+r)/\tau\rfloor$. Since $K_5$ is monotonically increasing in all of its arguments, we have that $\|\nabla u^n\|^2\leq K_5(\|\nabla u_0\|,\|f\|_{\infty},T_0+r)$ for all $n=1,\dots,N:=\lfloor (T_0+r)/\tau\rfloor$.\\
Note that by assumption, $n'\leq n_*-n_2 = N_0+N_r-N_r+2=N_0+2$. So for my inequality to hold, I need that $(N_0+2)\leq N_0+N_r$. This is true if and only if $2\leq N_r=\lfloor\frac{r}{\tau}\rfloor$. And recall that we had fixed $r\geq 4\kappa_1$, where $\tau\leq\kappa_1$. So $\frac{r}{\tau}\geq\frac{4\kappa_1}{\tau}\geq\frac{4\kappa_1}{\kappa}=4$. And $\lfloor\frac{r}{\tau}\rfloor\geq \frac{r}{\tau}-1\geq 3>2$. So my inequality is true.
    }
\end{confidential}
Similarly, using the $L^2(H^1)$ bound \eqref{eq:boundnablaunp1.1}, and the finite-time bound \eqref{2star}, we get 
\begin{align}\label{sum3}
   \sum_{n = n'}^{n'+n_2} \tau \alpha_n  
   &
   =
   \sum_{n = n'}^{n'+n_2} \tau \frac{108}{\nu^3}K_1\|\nabla u^{n}\|^2
   \leq
   C_9 \rho_0^2 +C_{10}\rho_0r+\tau C_{11}\rho_0K_5(\|\nabla u_0\|,\|f\|_{\infty},T_0+r).
\end{align}
Finally, using the fact that for $a,b,c\geq0$, 
\[
(a+b+c)^2\leq 2(a+b)^2+2c^2\leq 4a^2+4b^2+2c^2,
\]
the fact that if $a_i\geq0$, $\sum_{i=0}^{n-1}a_i^2\leq \left(\sum_{i=0}^{n-1}a_i\right)^2$, and the $L^2(H^1)$ bound \eqref{sum3}, we get 
\begin{align}\label{sum4}
    \sum_{n = n'}^{n'+n_2} \tau \eta_n 
    &
    =
    \sum_{n = n'}^{n'+n_2} \tau  2\frac{108}{\nu^3}K_1\|\nabla u^{n}\|^2+ \sum_{n = n'}^{n'+n_2}2\left(\tau\frac{108}{\nu^3}K_1\|\nabla u^{n}\|^2\right)^2
    \notag
    \\
    &
    \leq
    2\sum_{n = n'}^{n'+n_2} \tau  \frac{108}{\nu^3}K_1\|\nabla u^{n}\|^2+ 2\left(\sum_{n = n'}^{n'+n_2}\tau\frac{108}{\nu^3}K_1\|\nabla u^{n}\|^2\right)^2
    \\
    &
    \notag
    \leq
    2C_9 \rho_0^2 +2C_{10}\rho_0r+2\tau C_{11}\rho_0K_5(\|\nabla u_0\|,\|f\|_{\infty},T_0+r)
    \\
    & 
    \notag
    \qquad +
    8 C_9^2\rho_0^4 + 8 C_{10}^2\rho_0^2r^2+4\tau^2C_{11}^2\rho_0^2K_5^2(\|\nabla u_0\|,\|f\|_{\infty},T_0+r).
\end{align}
\begin{confidential}
    {\color{red}
    So we have 

\begin{align}
    a_1(n_1,n_*)
    &= 2C_9 \rho_0^2 +2C_{10}\rho_0r+2\tau C_{11}\rho_0K_5(\|\nabla u_0\|,\|f\|_{\infty},T_0+r)
    +
    8 C_9^4\rho_0^2 + 8 C_{10}^2\rho_0^2r^2+4\tau^2C_{11}^2\rho_0^2K_5^2(\|\nabla u_0\|,\|f\|_{\infty},T_0+r)
    \\
    a_2 (n_1,n_*)
    &
    =
    r \frac{18}{5\nu} \|f\|^2_{\infty}
    \\
    a_3(n_1,n_*) 
    &
    =
    4K_4 \frac{16\theta}{\nu(2\theta-1)}\rho_0 + \frac{16\theta}{\nu(2\theta-1)}K_3r + \frac{16\theta}{\nu(2\theta-1)}\frac{\tau\nu}{8\theta}(4\theta^2-6\theta+3)K_5(\|\nabla u_0\|,\|f\|_{\infty},T_0+r)
    \\
    a_4(n_1,n_*) 
    &
    =
    C_9 \rho_0^2 +C_{10}\rho_0r+\tau C_{11}\rho_0K_5(\|\nabla u_0\|,\|f\|_{\infty},T_0+r)
\end{align}
    }
\end{confidential}

We now note that the small time step assumption \eqref{taubound} implies
\[
\tau\leq \kappa_3(\|\nabla u_0\|,\|f\|_{\infty},T_0+r),\qquad\tau\leq \kappa_4(\|\nabla u_0\|,\|f\|_{\infty},T_0+r),
\]
and therefore, we have the following estimates
\begin{align*}
    \tau\frac{108}{\nu^3}\rho_0 K_5(\|\nabla u_0\|,\|f\|_{\infty},T_0+r) &\leq \frac{1}{15},
    \\
    \tau^2\frac{108^2}{\nu^6}\rho_0^2 K_5^2(\|\nabla u_0\|,\|f\|_{\infty},T_0+r) &\leq \frac{1}{15^2},
    \\
    \tau K_5(\|\nabla u_0\|,\|f\|_{\infty},T_0+r) &\leq \frac{\nu^3}{15\cdot 108 \rho_0}.
\end{align*}
Using \eqref{sum1}, \eqref{sum2}, \eqref{sum3}, and \eqref{sum4}, the above estimates yield the sums defined in Lemma \ref{lemma:DUGL} 
\begin{align*}
    a_1(n_1,n_*)
    &= 2C_9 \rho_0^2 +2C_{10}\rho_0r+\frac{4}{15(2\theta-1)}(4\theta^2-6\theta+3)
    +
    8 C_9^2\rho_0^4 + 8 C_{10}^2\rho_0^2r^2+\frac{16}{15^2(2\theta-1)^2}(4\theta^2-6\theta+3)^2,
    \\
    a_2 (n_1,n_*)
    &
    =
    r \frac{18}{5\nu} \|f\|^2_{\infty},
    \\
    a_3(n_1,n_*) 
    &
    =
    4K_4 \frac{16\theta}{\nu(2\theta-1)}\rho_0 + \frac{16\theta}{\nu(2\theta-1)}K_3r + \frac{2}{(2\theta-1)}(4\theta^2-6\theta+3)\frac{\nu^3}{15\cdot 108\rho_0},
    \\
    a_4(n_1,n_*) 
    &
    =
    C_9 \rho_0^2 +C_{10}\rho_0r+\frac{2}{15(2\theta-1)}(4\theta^2-6\theta+3).
\end{align*}

 By Proposition \ref{prop2} regarding the $V$-bound on a finite time interval, we get that \eqref{eq:2.22} holds for $n+1 = N_0+N_r$. Therefore, Lemma \ref{lemma:DUGL} gives
\begin{align}\label{eq:3.75}
    \|\nabla u^{N_0+N_r}\|^2 \leq \left(\frac{a_3(n_1,n_*) }{\tau n_2}+a_2(n_1,n_*) \right)\exp(a_1(n_1,n_*) )\exp(a_4(n_1,n_*) ). 
\end{align}
Since by assumption $r\geq 4\kappa_1$,  
\[
\frac{1}{\tau n_2}=\frac{1}{\tau(N_r-2)} \leq \frac{5}{2r}. 
\]
\begin{confidential}
    {\color{red}
    Why is that?
    Actually we have $\frac{1}{\tau n_2}\leq\frac{A}{r}$ for all $A\geq\frac{5}{2}$. By plotting this inequality, it becomes clear that the case $A=2$ (which Tone uses) is not true in general. For example, if we pick $\nu$ such that $\kappa_1=\frac{1}{\nu\lambda_1}\geq 3$, $r=3$, and $\tau=3$, we satisfy that $12\leq 4\kappa_1\leq r$ and $\tau = 3\leq \kappa_1$. And it is a straightforward calculation to show that the intended inequality does not hold for these values.

    Let's prove it for $A\geq \frac{5}{2}$. We want to show $\frac{1}{\tau n_2}\leq\frac{A}{r}$ for all $A\geq\frac{5}{2}$. Since $r\geq4\kappa_1\geq 4\tau$, $N_r-2=\lfloor r/\tau\rfloor-2\geq \lfloor 4\tau/\tau\rfloor =2>0$, so we can multiply by $N_r-2$ to get that the above is equivalent to showing
    \[
    \frac{1}{\tau}\leq\frac{A}{r}\left(\lfloor r/\tau\rfloor-2\right) \Leftrightarrow \frac{1}{\tau}-\frac{A}{r}\lfloor r/\tau\rfloor +\frac{2A}{r}\leq 0
    \]
    Multiplying by $r$, this is equivalent to showing
    \[
    \frac{r}{\tau}-A\lfloor r/\tau\rfloor+2A\leq0 \Leftrightarrow \frac{r}{\tau}-\lfloor r/\tau\rfloor+(1-A)\lfloor r/\tau\rfloor+2A\leq 0
    \]
    And it is true that 
    \[
    \frac{r}{\tau}-\lfloor r/\tau\rfloor<1,
    \]
     so
     \[
     \frac{r}{\tau}-\lfloor r/\tau\rfloor+(1-A)\lfloor r/\tau\rfloor+2A\leq 1+(1-A)\lfloor r/\tau\rfloor+2A.
     \]
     Now $r\geq 4\tau$, so $\lfloor r/\tau\rfloor\geq 4$, and so $-\lfloor r/\tau\rfloor\leq -4$. So
     \[
     1+(1-A)\lfloor r/\tau\rfloor+2A=1-(A-1)\lfloor r/\tau\rfloor+2A\leq 1-4(A-1)+2A=5-2A
     \]

    I want $5-2A\leq 0$, which happens iff $A\geq\frac{5}{2}$.
    }
\end{confidential}
So the bound \eqref{eq:3.75} becomes
\begin{align*}
    \|\nabla u^{N_0+N_r}\|^2 
    &
    \leq
    \rho_1^2(\|f\|_{\infty};r).
\end{align*}

Note that we assumed in \eqref{taubound} that $\tau\leq \kappa_3(\rho_1,\|f\|_{\infty},r)$ and $\tau\leq \kappa_4(\rho_1,\|f\|_{\infty},r)$, where $\kappa_3,\kappa_4$ are decreasing functions of their arguments. So we can think of $u^{N_0+N_r}$ as our initial data and apply Proposition \ref{prop2} with $T=r$. We then get that \eqref{eq:2.22} holds for all $n= N_0+N_r+1,\dots,N_0+2N_r$, and
\begin{confidential}
    {\color{red}
    So

\begin{align}
    \kappa_3(\rho_1,\|f\|_{\infty},r) & \leq \kappa_3(\|\nabla u^{N_0+N_r}\|,\|f\|_{\infty},r)
    \\
    \kappa_4(\rho_1,\|f\|_{\infty},r) & \leq \kappa_4(\|\nabla u^{N_0+N_r}\|,\|f\|_{\infty},r)
\end{align}

    }
\end{confidential}
\begin{align*}
    \|\nabla u^n\|^2\leq K_5(\|\nabla u^{N_0+N_r}\|,\|f\|_{\infty},N_r\tau)
\end{align*}
for all $n= N_0+N_r+1,\dots,N_0+2N_r$.

Since $\|\nabla u^{N_0+N_r}\|^2\leq \rho_1^2(\|f\|_{\infty};r)$ and $K_5(\cdot,\cdot,\cdot)$ is an increasing function of all its arguments, we get that
\begin{align*}
    \|\nabla u^n\|^2\leq K_5(\rho_1,\|f\|_{\infty},N_r\tau)
\end{align*}
for all $n= N_0+N_r+1,\dots,N_0+2N_r$.

Now applying Lemma \ref{lemma:DUGL} with $n_1=N_0+N_r+1, n_2=N_r-2$, and $n_*=N_0+2N_r$,
\begin{align*}
    \|\nabla u^{N_0+2N_r}\|^2\leq \rho_1^2(\|f\|_{\infty};r).
\end{align*}

Iterating Proposition \ref{prop2} and Lemma \ref{lemma:DUGL} in this manner, we get 
\begin{align*}
    \|\nabla u^n\|^2\leq K_5(\rho_1,\|f\|_{\infty},r)= K_6(\|f\|_{\infty})
\end{align*}
for all $n\geq N_0+N_r$, and recalling our initial bound \eqref{2star} on a finite interval, we finally obtain
\begin{align*}
    \|\nabla u^n\|^2 \leq \max\{K_5(\|\nabla u_0\|, \|f\|_{\infty}, T_0+r), K_6(\|f\|_{\infty})\} = K_7(\|\nabla u_0\|, \|f\|_{\infty})
\end{align*}
for all $n\geq 1$.
\end{proof}

% Section 6 - Conclusions
\section{Conclusions}\label{sec:conclusion}
We proved the uniform-in-time stability of the Cauchy one-leg time-stepping method for the two-dimensional Navier-Stokes equations. In particular, we established conditions under which this stability holds for the semi-discrete-in-time formulation, including a time-step restriction \eqref{taubound} and a constraint on the method parameter $\theta\in\left(\frac{1}{2},1\right)$. 

There are two key arguments that allowed us to extend the result in \cite{MR2217369}. First, in Lemma \ref{lemma:gradu1}, we established an $L^2(H^1)$ bound at the integer time levels. A similar bound at the fractional time steps $u^{n+\theta}$, which arises naturally as an extension of the BE case, is not sufficient to prove the $V$-stability argument. The second key result is the reformulation of the energy estimate in Lemma \ref{lemma:BEstab} as presented in Lemma \ref{lemma:energy-equality}. This step is essential, as it shows that this type of argument cannot be applied to the case $\theta=\frac{1}{2}$ without imposing a time-step restriction. Therefore, proving long-time stability of the semi-discretization in time for the case $\theta=\frac{1}{2}$, which corresponds to the midpoint method, remains an open problem.

% Section 6 - Funding
\section{Funding}
Isabel Barrio Sanchez and Catalin Trenchea were partially supported by the National Science Foundation under grant DMS-2208220.

\begin{confidential}
    {\color{red}
\section{Appendix: Computing $\alpha,\varepsilon, a,b$}\label{sec:appendix1}
We want to find 
$\alpha, \varepsilon > 0$ and $a,b\in \mathbb{R}$ such that
\begin{align*}
& 
\alpha + \varepsilon + a^2 
= 
 \theta^2  \Big( 2 + \lambda_1 \nu \theta \tau \Big) 
\\
& 
b^2 - \alpha 
= 
	( \theta - 1) \bigg[ 
\Big( 1 + \lambda_1 \nu \theta \tau \Big) ( \theta - 1 ) + ( \theta + 1 ) \bigg] 
\\
& 
2 ab 
= 
2\theta \bigg[  \theta - (1 - \theta) \Big( 1 + \lambda_1 \nu \theta \tau \Big) \bigg].
\end{align*}

Adding these relations we get

    {\color{red}
    \begin{align*}
& 
(a + b)^2 + \varepsilon
= 
 \theta^2  \Big( 2 + \lambda_1 \nu \theta \tau \Big) 
+
	( \theta - 1) \bigg[ 
\Big( 1 + \lambda_1 \nu \theta \tau \Big) ( \theta - 1 ) + ( \theta + 1 ) \bigg] 
\\
&
 + 2\theta \bigg[  \theta - (1 - \theta) \Big( 1 + \lambda_1 \nu \theta \tau \Big) \bigg]
 \\
 & 
 = 
 \theta^2 
 +  \theta^2  \Big( 1 + \lambda_1 \nu \theta \tau \Big) 
	+ \Big( 1 + \lambda_1 \nu \theta \tau \Big) ( 1 - \theta)^2 + ( \theta^2 - 1 )
\\
&
 + 2\theta^2 
 - 2\theta (1 - \theta) \Big( 1 + \lambda_1 \nu \theta \tau \Big)
 \\
 & 
 = 
4 \theta^2 -1
 + \big[  \theta^2  + ( 1 - \theta)^2  - 2\theta (1 - \theta)  \big] 
 \Big( 1 + \lambda_1 \nu \theta \tau \Big) 
 \\
 &
 = 
4 \theta^2 - 1
 + ( 2\theta - 1 )^2   \Big( 1 + \lambda_1 \nu \theta \tau \Big),
\end{align*}
    }

\begin{align*}
& 
(a + b)^2 + \varepsilon
= 
4 \theta^2 - 1
 + ( 2\theta - 1 )^2   \Big( 1 + \lambda_1 \nu \theta \tau \Big),
\end{align*}

i.e., 
\begin{align*}
a + b
& 
= 
\sqrt{ 8 \theta^2  - 4 \theta 
 - \varepsilon
+ ( 2\theta - 1 )^2 \lambda_1 \nu \theta \tau 
 }
\\
ab 
& 
= 
\theta \bigg[  2 \theta - 1 - (1 - \theta)\lambda_1 \nu \theta \tau  \bigg]
.
\end{align*}

    {\color{red}
    \begin{align*}
a + b
& 
= 
\sqrt{ 4 \theta^2 -1 
 + ( 2\theta - 1 )^2   \Big( 1 + \lambda_1 \nu \theta \tau \Big)
 - \varepsilon }
\\
& 
= 
\sqrt{ 4 \theta^2 -1
 + ( 2\theta - 1 )^2  
 + ( 2\theta - 1 )^2 \lambda_1 \nu \theta \tau 
 - \varepsilon }
 \\
 &
= 
\sqrt{ 8 \theta^2  - 4 \theta 
 - \varepsilon
+ ( 2\theta - 1 )^2 \lambda_1 \nu \theta \tau 
 }
\\
ab 
& 
= 
\theta \bigg[  \theta - (1 - \theta) \Big( 1 + \lambda_1 \nu \theta \tau \Big) \bigg]
\\
& 
= 
\theta \bigg[  2 \theta - 1 - (1 - \theta)\lambda_1 \nu \theta \tau  \bigg]
.
\end{align*}
    }

If $a+b=c$ and $ab=d$, then $a$ and $b$ are roots of $x^2-cx+d=0$. Therefore, $a$ and $b$ are the roots of the quadratic equation

    {\color{red}
    This follows directly from using the quadratic formula and plugging $a+b$ in place of $c$ and $ab$ in place of d.
    }

\begin{align*}
x^2 - \bigg( 8 \theta^2 - 4 \theta  - \varepsilon
 + ( 2\theta - 1 )^2    \lambda_1 \nu \theta \tau
 \bigg)^{1/2} x
 + \theta \bigg[  2\theta - 1 - (1 - \theta ) \lambda_1 \nu \theta \tau \Big) \bigg] 
 = 0,
\end{align*}
which has discriminant

    {\color{red}
    \begin{align*}
\Delta 
& 
= 
\bigg( 8 \theta^2 - 4 \theta - \varepsilon 
 + ( 2\theta - 1 )^2 \lambda_1 \nu \theta \tau
\bigg)
- 4 \theta \bigg[ 2 \theta - 1 - ( 1 - \theta) \lambda_1 \nu \theta \tau \bigg] 
\\
& 
= 
\cancel{8 \theta^2} - \bcancel{4 \theta} - \varepsilon 
 + ( 2\theta - 1 )^2 \lambda_1 \nu \theta \tau
- \cancel{8 \theta^2}  + \bcancel{4 \theta} + 4\theta ( 1 - \theta) \lambda_1 \nu \theta \tau 
\\
& 
= 
- \varepsilon 
 + \big( 4\theta^ 2 - 4 \theta  + 1 + 4\theta ( 1 - \theta)\big) \lambda_1 \nu \theta \tau
\\
& 
= 
- \varepsilon 
 + \lambda_1 \nu \theta \tau
 ,
 \end{align*}
    }

\begin{align*}
\Delta 
= 
- \varepsilon 
 + \lambda_1 \nu \theta \tau
 ,
 \end{align*}
 
The roots $x_{1,2}$ of the quadratic polynomial are
\begin{align*}
x_{1,2} 
& 
= 
\frac{1}{2} \Big\{
 \Big( 8 \theta^2 - 4 \theta  - \varepsilon
 + ( 2\theta - 1 )^2    \lambda_1 \nu \theta \tau \Big)^{1/2}
\pm 
\Big( - \varepsilon  + \lambda_1 \nu \theta \tau \Big)^{1/2}
  \Big\},
\end{align*}
so

\begin{align*}
x_{1,2}^2
&
 = 
2 \theta^2 - \theta  - \frac{1}{2} \varepsilon
 + ( \theta^2 - \theta + \frac{1}{2} )    \lambda_1 \nu \theta \tau 
\\
& \qquad
\pm 
\frac{1}{2}
\Big( 8 \theta^2 - 4 \theta  - \varepsilon
 + ( 2\theta - 1 )^2    \lambda_1 \nu \theta \tau \Big)^{1/2}
\Big( - \varepsilon  + \lambda_1 \nu \theta \tau \Big)^{1/2}
.
\end{align*}

    {\color{red}
    \begin{align*}
x_{1,2}^2
&
 = 
\frac{1}{4} \Big\{
 8 \theta^2 - 4 \theta  - \varepsilon
 + ( 2\theta - 1 )^2    \lambda_1 \nu \theta \tau 
+
\Big( - \varepsilon  + \lambda_1 \nu \theta \tau \Big)
\\
& \qquad
\pm 2
\Big( 8 \theta^2 - 4 \theta  - \varepsilon
 + ( 2\theta - 1 )^2    \lambda_1 \nu \theta \tau \Big)^{1/2}
\Big( - \varepsilon  + \lambda_1 \nu \theta \tau \Big)^{1/2}
 \Big\}
\\
&
= 
\frac{1}{4} \Big\{
 8 \theta^2 - 4 \theta  - 2 \varepsilon
 + ( 4\theta^2 - 4 \theta + 2 )    \lambda_1 \nu \theta \tau 
\\
& \qquad
\pm 2
\Big( 8 \theta^2 - 4 \theta  - \varepsilon
 + ( 2\theta - 1 )^2    \lambda_1 \nu \theta \tau \Big)^{1/2}
\Big( - \varepsilon  + \lambda_1 \nu \theta \tau \Big)^{1/2}
 \Big\}
\\
&
= 
2 \theta^2 - \theta  - \frac{1}{2} \varepsilon
 + ( \theta^2 - \theta + \frac{1}{2} )    \lambda_1 \nu \theta \tau 
\\
& \qquad
\pm 
\frac{1}{2}
\Big( 8 \theta^2 - 4 \theta  - \varepsilon
 + ( 2\theta - 1 )^2    \lambda_1 \nu \theta \tau \Big)^{1/2}
\Big( - \varepsilon  + \lambda_1 \nu \theta \tau \Big)^{1/2}
.
\end{align*} 
    }

We set 
\begin{align}
\label{eq:a,b}
& 
a 
:=
\frac{1}{2} \Big\{
 \Big( 8 \theta^2 - 4 \theta  - \varepsilon
 + ( 2\theta - 1 )^2    \lambda_1 \nu \theta \tau \Big)^{1/2}
- 
\Big( - \varepsilon  + \lambda_1 \nu \theta \tau \Big)^{1/2}
  \Big\}
,
\\
& 
b
:= 
\frac{1}{2} \Big\{
 \Big( 8 \theta^2 - 4 \theta  - \varepsilon
 + ( 2\theta - 1 )^2    \lambda_1 \nu \theta \tau \Big)^{1/2}
+ 
\Big( - \varepsilon  + \lambda_1 \nu \theta \tau \Big)^{1/2}
  \Big\}
,
\end{align}
so 
\begin{align*} 
b^2
& 
= 
2 \theta^2 - \theta  - \frac{1}{2} \varepsilon
 + ( \theta^2 - \theta + \frac{1}{2} )    \lambda_1 \nu \theta \tau 
\\
& \qquad
+ 
\frac{1}{2}
\Big( 8 \theta^2 - 4 \theta  - \varepsilon
 + ( 2\theta - 1 )^2    \lambda_1 \nu \theta \tau \Big)^{1/2}
\Big( - \varepsilon  + \lambda_1 \nu \theta \tau \Big)^{1/2}
,
\\
a^2
& 
= 
2 \theta^2 - \theta  - \frac{1}{2} \varepsilon
 + ( \theta^2 - \theta + \frac{1}{2} )    \lambda_1 \nu \theta \tau 
\\
& \qquad
-
\frac{1}{2}
\Big( 8 \theta^2 - 4 \theta  - \varepsilon
 + ( 2\theta - 1 )^2    \lambda_1 \nu \theta \tau \Big)^{1/2}
\Big( - \varepsilon  + \lambda_1 \nu \theta \tau \Big)^{1/2}
.
\end{align*}
${\empty}$
\\

Therefore

\begin{align*}
\alpha 
& 
= 
b^2 
- 
	( \theta - 1) \bigg[ 
\Big( 1 + \lambda_1 \nu \theta \tau \Big) ( \theta - 1 ) + ( \theta + 1 ) \bigg] 
\\
& 
= 
 \theta 
- \frac{1}{2} \varepsilon	
 + \Big( \theta -  \frac{1}{2} \Big)    \lambda_1 \nu \theta \tau 
\\
& \qquad
+ 
\frac{1}{2}
\Big( 8 \theta^2 - 4 \theta  - \varepsilon
 + ( 2\theta - 1 )^2    \lambda_1 \nu \theta \tau \Big)^{1/2}
\Big( - \varepsilon  + \lambda_1 \nu \theta \tau \Big)^{1/2}
.
\end{align*}

    {\color{red}
    \begin{align*}
\alpha 
& 
= 
b^2 
- 
	( \theta - 1) \bigg[ 
\Big( 1 + \lambda_1 \nu \theta \tau \Big) ( \theta - 1 ) + ( \theta + 1 ) \bigg] 
\\
& 
= 
2 \theta^2 - \theta  - \frac{1}{2} \varepsilon
 + ( \theta^2 - \theta + \frac{1}{2} )    \lambda_1 \nu \theta \tau 
\\
& \qquad
+ 
\frac{1}{2}
\Big( 8 \theta^2 - 4 \theta  - \varepsilon
 + ( 2\theta - 1 )^2    \lambda_1 \nu \theta \tau \Big)^{1/2}
\Big( - \varepsilon  + \lambda_1 \nu \theta \tau \Big)^{1/2}
\\ 
& \qquad
- 
	( \theta - 1) \bigg[ 
\Big( 1 + \lambda_1 \nu \theta \tau \Big) ( \theta - 1 ) + ( \theta + 1 ) \bigg] 
\\
& 
= 
2 \theta^2 - \theta  - \frac{1}{2} \varepsilon
 + ( \theta^2 - \theta + \frac{1}{2} )    \lambda_1 \nu \theta \tau 
\\
& \qquad
+ 
\frac{1}{2}
\Big( 8 \theta^2 - 4 \theta  - \varepsilon
 + ( 2\theta - 1 )^2    \lambda_1 \nu \theta \tau \Big)^{1/2}
\Big( - \varepsilon  + \lambda_1 \nu \theta \tau \Big)^{1/2}
\\ 
& \qquad
	- ( \theta - 1)^2 
	- ( \theta - 1)^2 \lambda_1 \nu \theta \tau 
	- ( \theta^2 - 1 )
\\
& 
= 
2 \theta^2 - \theta  
	- ( \theta - 1)^2 
	- ( \theta^2 - 1 )
- \frac{1}{2} \varepsilon	
\\ 
& \qquad
 - ( \theta - 1)^2 \lambda_1 \nu \theta \tau 
 + ( \theta^2 - \theta + \frac{1}{2} )    \lambda_1 \nu \theta \tau 
\\
& \qquad
+ 
\frac{1}{2}
\Big( 8 \theta^2 - 4 \theta  - \varepsilon
 + ( 2\theta - 1 )^2    \lambda_1 \nu \theta \tau \Big)^{1/2}
\Big( - \varepsilon  + \lambda_1 \nu \theta \tau \Big)^{1/2}
\\
& 
= 
\cancel{2 \theta^2} - \theta  
	- \cancel{\theta^2} + 2 \theta - \xcancel{1}
	- \cancel{\theta^2} + \xcancel{1}
- \frac{1}{2} \varepsilon	
\\ 
& \qquad
 + \Big( - \bcancel{\theta^2} + 2 \theta - 1 + \bcancel{\theta^2} - \theta + \frac{1}{2} \Big)    \lambda_1 \nu \theta \tau 
\\
& \qquad
+ 
\frac{1}{2}
\Big( 8 \theta^2 - 4 \theta  - \varepsilon
 + ( 2\theta - 1 )^2    \lambda_1 \nu \theta \tau \Big)^{1/2}
\Big( - \varepsilon  + \lambda_1 \nu \theta \tau \Big)^{1/2}
\\
& 
= 
 \theta 
- \frac{1}{2} \varepsilon	
 + \Big( \theta -  \frac{1}{2} \Big)    \lambda_1 \nu \theta \tau 
\\
& \qquad
+ 
\frac{1}{2}
\Big( 8 \theta^2 - 4 \theta  - \varepsilon
 + ( 2\theta - 1 )^2    \lambda_1 \nu \theta \tau \Big)^{1/2}
\Big( - \varepsilon  + \lambda_1 \nu \theta \tau \Big)^{1/2}
.
\end{align*}
    }

A direct computation shows that
\begin{align}
\label{eq:check}
& 
\alpha + \varepsilon + a^2 
= 
 \theta^2  \Big( 2 + \lambda_1 \nu \theta \tau \Big) .
\end{align}

{\color{red}
Indeed,
\begin{align*}
\alpha + \varepsilon + a^2 
&
= 
 \theta 
- \frac{1}{2} \varepsilon	
 + \Big( \theta -  \frac{1}{2} \Big)    \lambda_1 \nu \theta \tau 
\\
& \qquad
+ 
\frac{1}{2}
\Big( 8 \theta^2 - 4 \theta  - \varepsilon
 + ( 2\theta - 1 )^2    \lambda_1 \nu \theta \tau \Big)^{1/2}
\Big( - \varepsilon  + \lambda_1 \nu \theta \tau \Big)^{1/2}
\\
& \qquad
+ \varepsilon 
\\
& \qquad
+
2 \theta^2 - \theta  - \frac{1}{2} \varepsilon
 + ( \theta^2 - \theta + \frac{1}{2} )    \lambda_1 \nu \theta \tau 
\\
& \qquad
-
\frac{1}{2}
\Big( 8 \theta^2 - 4 \theta  - \varepsilon
 + ( 2\theta - 1 )^2    \lambda_1 \nu \theta \tau \Big)^{1/2}
\Big( - \varepsilon  + \lambda_1 \nu \theta \tau \Big)^{1/2}
\\
&
= 
 \bcancel{\theta }
- \cancel{\frac{1}{2} \varepsilon	}
 + \Big( \xcancel{\theta -  \frac{1}{2}} \Big)    \lambda_1 \nu \theta \tau 
\\
& \qquad
+ \cancel{\varepsilon }
\\
& \qquad
+
2 \theta^2 - \bcancel{\theta}  - \cancel{\frac{1}{2} \varepsilon}
 + ( \theta^2 - \xcancel{\theta + \frac{1}{2}} )    \lambda_1 \nu \theta \tau 
\\
&
= 
2 \theta^2  + \theta^2  \lambda_1 \nu \theta \tau 
.
\end{align*}
which verifies \eqref{eq:check}.
}

Since we are working with square roots, we need 
$$
- \varepsilon +\lambda_1 \nu \theta \tau
\geq 0,
$$
and 
$$ 8 \theta^2 - 4 \theta  - \varepsilon
 + ( 2\theta - 1 )^2    \lambda_1 \nu \theta \tau \geq 0$$
That is, we need 
$$
\varepsilon 
\leq \lambda_1 \nu \tau\theta,
$$
and
$$\varepsilon\leq 8 \theta^2 - 4 \theta 
 + ( 2\theta - 1 )^2    \lambda_1 \nu \theta \tau $$
The following choice of $\varepsilon$ satisfies the above two inequalities (provided that \eqref{eq:step-condition} holds).\\
\begin{align*}
& 
\varepsilon 
:= 
\lambda_1 \nu ( 2\theta - 1)  \tau
,
\end{align*}

{\color{red}
Proof that $\varepsilon$ satisfies the inequalities:\\
The first inequality is straightforward since $(2\theta-1)\leq 1$ for all $\theta\in(\frac{1}{2},1]$
\begin{align*}
    & \lambda_1 \nu ( 2\theta - 1)  \tau \leq 8 \theta^2 - 4 \theta 
 + ( 2\theta - 1 )^2    \lambda_1 \nu \theta \tau\\
 &\iff \frac{\nu\tau}{\frac{1}{\lambda_1}} \Bigg[(2\theta-1)-(2\theta-1)^2\theta\Bigg]\leq 4\theta(2\theta-1)\\
 &\iff \frac{\nu\tau}{\frac{1}{\lambda_1}} \Bigg[(2\theta-1)[1-(2\theta-1)\theta]\Bigg]\leq 4\theta(2\theta-1)\\
  &\iff \frac{\nu\tau}{\frac{1}{\lambda_1}} \Bigg[(2\theta-1)(1-2\theta^2+\theta)\Bigg]\leq 4\theta(2\theta-1)\\
   &\iff \frac{\nu\tau}{\frac{1}{\lambda_1}} \Bigg[(2\theta-1)(-2\theta-1)(\theta-1)\Bigg]\leq 4\theta(2\theta-1)\\
   &\iff \frac{\nu\tau}{\frac{1}{\lambda_1}} \leq\frac{4\theta}{(-2\theta-1)(\theta-1)} \\
\end{align*}
From \eqref{eq:step-condition}, $\frac{\nu\tau}{\frac{1}{\lambda_1}}\in[0,1]$. And $\frac{4\theta}{(-2\theta-1)(\theta-1)}\geq 1$ for $\theta\in(\frac{1}{2},1]$. So the second inequality is also true, and the $\varepsilon$ meets the condition.\\
}

Plugging $\varepsilon$ into the definitions of $a$, $b$, and $\alpha$, we obtain the desired result:

\begin{align*}
\alpha 
&
=
 \theta 
 - \frac{1}{2} ( 2\theta - 1 ) ( 1 -  \theta )    \lambda_1 \nu \tau 
\\
& \qquad
+ 
\frac{1}{2}
\Big[ ( 2 \theta  - 1 )(1-\theta) \Big( 4\theta 
 - ( 1 - \theta )  ( 2 \theta  + 1 )   \lambda_1 \nu  \tau \Big) 
\lambda_1 \nu  \tau 
\Big]^{1/2}
,\\
a 
&
= 
\frac{1}{2} \Big\{
( 2 \theta - 1 ) \Big( 4 \theta  - \lambda_1 \nu ( 2\theta + 1 ) ( 1 - \theta )  \tau
 \Big)^{1/2}
- 
\Big( 
\lambda_1 \nu ( 1 - \theta )  \tau
\Big)^{1/2}
\Big\}
,
\\
b 
& 
=
\frac{1}{2} \Big\{
( 2 \theta - 1 ) \Big( 4 \theta  - \lambda_1 \nu ( 2\theta + 1 ) ( 1 - \theta )  \tau
 \Big)^{1/2}
+ 
\Big( 
\lambda_1 \nu ( 1 - \theta )  \tau
\Big)^{1/2}
\Big\}
.
\end{align*}
}
\end{confidential}
\begin{confidential}
    {\color{red}
\begin{align*} 
\alpha 
& 
= 
 \theta 
- \frac{1}{2} \varepsilon	
 + \Big( \theta -  \frac{1}{2} \Big)    \lambda_1 \nu \theta \tau 
\\
& \qquad
+ 
\frac{1}{2}
\Big( 8 \theta^2 - 4 \theta  - \varepsilon
 + ( 2\theta - 1 )^2    \lambda_1 \nu \theta \tau \Big)^{1/2}
\Big( - \varepsilon  + \lambda_1 \nu \theta \tau \Big)^{1/2}
\\
& 
= 
 \theta 
- \frac{1}{2} \lambda_1 \nu ( 2\theta - 1)  \tau
 + \Big( \theta -  \frac{1}{2} \Big)    \lambda_1 \nu \theta \tau 
\\
& \qquad
+ 
\frac{1}{2}
\Big( 8 \theta^2 - 4 \theta  
- \lambda_1 \nu ( 2\theta - 1)  \tau
 + ( 2\theta - 1 )^2    \lambda_1 \nu \theta \tau \Big)^{1/2}
 \\
 &
 \cdot
\Big( 
- \lambda_1 \nu ( 2\theta - 1)  \tau
+ \lambda_1 \nu \theta \tau 
\Big)^{1/2}
\\
& 
= 
 \theta 
 + \Big( - \theta + \frac{1}{2} + \theta^2 -  \frac{1}{2}\theta \Big)    \lambda_1 \nu \tau 
\\
& \qquad
+ 
\frac{1}{2}
\Big( 4\theta ( 2 \theta  - 1 )
 + (  - 2\theta + 1 + 4\theta^3  - 4\theta^2 + \theta )    \lambda_1 \nu  \tau \Big)^{1/2}
 \\
 &
 \cdot
\Big( 
\lambda_1 \nu ( - 2\theta + 1 + \theta ) \tau 
\Big)^{1/2}
\\
& 
= 
 \theta 
 + \Big( \theta^2 -  \frac{3}{2}\theta  + \frac{1}{2} \Big)    \lambda_1 \nu \tau 
\\
& \qquad
+ 
\frac{1}{2}
\Big( 4\theta ( 2 \theta  - 1 )
 + (  4\theta^3  - 4\theta^2 - \theta  + 1 )    \lambda_1 \nu  \tau \Big)^{1/2}
\Big( 
\lambda_1 \nu ( 1 - \theta ) \tau 
\Big)^{1/2}
\\
& 
= 
 \theta 
 + \frac{1}{2} ( 2\theta^2 -  3 \theta  + 1 )    \lambda_1 \nu \tau 
\\
& \qquad
+ 
\frac{1}{2}
\Big( 4\theta ( 2 \theta  - 1 )
 + \big (  4\theta^2 ( \theta  -  1 ) - \theta  + 1 \big)    \lambda_1 \nu  \tau \Big)^{1/2}
\Big( 
\lambda_1 \nu ( 1 - \theta ) \tau 
\Big)^{1/2}
\\
& 
= 
 \theta 
 + \frac{1}{2} ( 2\theta - 1 ) ( \theta  - 1 )    \lambda_1 \nu \tau 
\\
& \qquad
+ 
\frac{1}{2}
\Big( 4\theta ( 2 \theta  - 1 )
 + ( \theta  -  1 ) ( 2\theta - 1 ) ( 2 \theta  + 1 )    \lambda_1 \nu  \tau \Big)^{1/2}
\Big( 
\lambda_1 \nu ( 1 - \theta ) \tau 
\Big)^{1/2}
\\
& 
= 
 \theta 
 - \frac{1}{2} ( 2\theta - 1 ) ( 1 -  \theta )    \lambda_1 \nu \tau 
\\
& \qquad
+ 
\frac{1}{2}
\Big[ ( 2 \theta  - 1 ) \Big( 4\theta 
 - ( 1 - \theta )  ( 2 \theta  + 1 )   \lambda_1 \nu  \tau \Big) \Big]^{1/2}
\Big( 
\lambda_1 \nu ( 1 - \theta ) \tau 
\Big)^{1/2}
\\
& 
= 
 \theta 
 - \frac{1}{2} ( 2\theta - 1 ) ( 1 -  \theta )    \lambda_1 \nu \tau 
\\
& \qquad
+ 
\frac{1}{2}
\Big[ ( 2 \theta  - 1 )(1-\theta) \Big( 4\theta 
 - ( 1 - \theta )  ( 2 \theta  + 1 )   \lambda_1 \nu  \tau \Big) 
\lambda_1 \nu  \tau 
\Big]^{1/2}
.
\end{align*}

Also from \eqref{eq:a,b} we have
\begin{align*}
a 
& 
:=
\frac{1}{2} \Big\{
 \Big( 8 \theta^2 - 4 \theta  - \varepsilon
 + ( 2\theta - 1 )^2    \lambda_1 \nu \theta \tau \Big)^{1/2}
- 
\Big( - \varepsilon  + \lambda_1 \nu \theta \tau \Big)^{1/2}
  \Big\}
\\
=
& 
\frac{1}{2} \Big\{
 \Big( 8 \theta^2 - 4 \theta  
 - \lambda_1 \nu ( 2\theta - 1)  \tau
 + ( 2\theta - 1 )^2    \lambda_1 \nu \theta \tau \Big)^{1/2}
- 
\Big( 
- \lambda_1 \nu ( 2\theta - 1)  \tau
+ \lambda_1 \nu \theta \tau \Big)^{1/2}
  \Big\}
\\
=
& 
\frac{1}{2} \Big\{
\Big( 4 \theta  ( 2 \theta - 1 )
 + \lambda_1 \nu ( -  2\theta + 1 + 4 \theta^3 - 4 \theta^2 + \theta )  \tau
 \Big)^{1/2}
- 
\Big( 
\lambda_1 \nu (  - 2\theta + 1 + \theta )  \tau
\Big)^{1/2}
\Big\}
\\
=
& 
\frac{1}{2} \Big\{
\Big( 4 \theta  ( 2 \theta - 1 )
 + \lambda_1 \nu \big( 4 \theta^2 ( \theta - 1 )  - \theta + 1 \big)  \tau
 \Big)^{1/2}
- 
\Big( 
\lambda_1 \nu ( 1 - \theta )  \tau
\Big)^{1/2}
\Big\}
\\
=
& 
\frac{1}{2} \Big\{
\Big( 4 \theta  ( 2 \theta - 1 )
 + \lambda_1 \nu \big( ( 2\theta - 1 ) ( 2\theta + 1 ) ( \theta - 1 ) \big)  \tau
 \Big)^{1/2}
- 
\Big( 
\lambda_1 \nu ( 1 - \theta )  \tau
\Big)^{1/2}
\Big\}
\\
=
& 
\frac{1}{2} \Big\{
( 2 \theta - 1 ) \Big( 4 \theta  - \lambda_1 \nu ( 2\theta + 1 ) ( 1 - \theta )  \tau
 \Big)^{1/2}
- 
\Big( 
\lambda_1 \nu ( 1 - \theta )  \tau
\Big)^{1/2}
\Big\}
,
\end{align*}
namely
\begin{align*}
a 
&
= 
\frac{1}{2} \Big\{
( 2 \theta - 1 ) \Big( 4 \theta  - \lambda_1 \nu ( 2\theta + 1 ) ( 1 - \theta )  \tau
 \Big)^{1/2}
- 
\Big( 
\lambda_1 \nu ( 1 - \theta )  \tau
\Big)^{1/2}
\Big\}
,
\\
b 
& 
=
\frac{1}{2} \Big\{
( 2 \theta - 1 ) \Big( 4 \theta  - \lambda_1 \nu ( 2\theta + 1 ) ( 1 - \theta )  \tau
 \Big)^{1/2}
+ 
\Big( 
\lambda_1 \nu ( 1 - \theta )  \tau
\Big)^{1/2}
\Big\}
.
\end{align*}
}
\end{confidential}
\newpage
% References
\bibliographystyle{plain}
\bibliography{refs,database2}

\end{document}